\newcommand{\sspcoef}{{\cal{C}}}
\newcommand{\dx}{\Delta x}
\newcommand{\dt}{\Delta t}
\newcommand{\Dt}{\Delta t}
\newcommand{\DtFE}{\dt_{\textup{FE}}}
\newcommand{\ceff}{\sspcoef_{\textup{eff}}}
\newcommand\vy{{\bf y}}
\newcommand\ve{{\bf e}}
\newcommand{\aij}{\alpha_{i,j}}
\newcommand{\bij}{\beta_{i,j}}
\newtheorem{rmk}{Remark}
\newtheorem{thm}{Theorem}
\newtheorem{lem}{Lemma}
\newtheorem{defn}{Definition}
\title{A Strong Stability Preserving  Analysis for Explicit Multistage Two-Derivative Time-Stepping Schemes
Based on Taylor Series Conditions.}
\author{ Zachary Grant$^1$\thanks{Corresponding author: grantzj@ornl.gov}, 
Sigal Gottlieb$^2$,
David C. Seal$^3$\\
{{\small $^1$Department of Mathematics, Department of Computational and Applied Mathematics,
Oak Ridge National Laboratory} }\\
{{\small $^2$Department of Mathematics, University of Massachusetts, Dartmouth} }\\
{{\small $^3$Department of Mathematics, U.S. Naval Academy.}} \normalsize
}
\date{}
\begin{document}

\maketitle
\bibliographystyle{siam}

\vspace{-0.5in}
\abstract{High order strong stability preserving (SSP)  time discretizations are often needed to ensure the nonlinear (and sometimes non-inner-product)
strong stability properties of   spatial discretizations specially designed for the solution of hyperbolic PDEs. Multiderivative time-stepping 
methods have recently been increasingly used for evolving hyperbolic PDEs, and the strong stability properties of these methods
are of interest. In our prior work we explored time discretizations that preserve the strong stability properties 
of spatial discretizations coupled with  forward Euler and a second derivative formulation. However, many spatial discretizations do not
satisfy strong stability properties when coupled with this second derivative formulation, but rather with a more natural Taylor series 
formulation. In this work we demonstrate  sufficient 
conditions for an explicit two-derivative multistage method to preserve the strong stability properties of spatial discretizations in a forward Euler
and Taylor series formulation. We call these strong stability preserving  Taylor series (SSP-TS) methods.
We also prove that the maximal order of SSP-TS methods is $p=6$, and define an optimization procedure
that allows us to find such SSP methods. Several types of these methods are presented and their efficiency compared. Finally,
these methods are tested on several  PDEs to demonstrate the benefit of  SSP-TS methods, the need for the SSP property, and the
sharpness of the SSP time-step in many cases.
}


 \normalsize
 
\section{Introduction}

The solution to a hyperbolic conservation law
\begin{eqnarray}
\label{pde}
	U_t + f(U)_x = 0,
\end{eqnarray}
may develop sharp gradients or discontinuities, which results in significant challenges  to the numerical
simulation of such problems. 
To ensure that it can handle the presence of a discontinuity, a spatial discretization is carefully
designed to satisfy some nonlinear stability properties, often in a non-inner-product sense, 
e.g. total variation diminishing, maximum norm preserving,  or positivity preserving properties. 
The development of high order spatial discretizations
that can handle discontinuities is a major research area \cite{cockburn1989,harten1983,kurganov2000,
liu1994, osher1984,sweby1984,tadmor1998,ToroADER,ToroADERWENO, ToroADERCL,ZhangShu2010}.

When the partial differential equation \eqref{pde} is semi-discretized we obtain the 
ordinary differential equation (ODE) 
\begin{eqnarray}
\label{ode}
u_t = F(u),
\end{eqnarray}
(where $u$ is a vector of approximations to $U$).
This formulation is often referred to as a method of lines (MOL) formulation, and has the advantage 
of decoupling the spatial and time-discretizations.
The  spatial discretizations designed to handle discontinuities 
ensure that when the semi-discretized equation \eqref{ode} 
is evolved using a forward Euler method
\begin{equation} \label{eqn:FEmethod}
	u^{n+1} = u^n + \dt F(u^{n}), 
\end{equation}
(where $u^n$ is a discrete approximation
to $U$ at time $t^n$)
 the numerical solution satisfies  the desired strong stability property,
such as total variation stability or positivity.
If the desired nonlinear  stability property
such as a norm, semi-norm, or convex functional,  is represented by $\| \cdot \|$,
the spatial discretization satisfies the monotonicity property
\begin{equation} \label{eqn:FEmonotonicity}
	\| u^n + \dt F(u^{n})  \| \leq \| u^n \|, 
\end{equation}
under the time-step restriction
\begin{equation} \label{eqn:FEcond}
	 0  \leq \dt \leq \Delta t_{FE}.
\end{equation}

In practice, in place of the first order time discretization \eqref{eqn:FEmethod}, we typically require a 
higher-order time integrator, that preserves the  strong stability  property 
\begin{equation} \label{eqn:monotonicity}
	\| u^{n+1} \| \le \|u^n\| ,
\end{equation}	
perhaps under a modified time-step restriction. 
For this purpose, time-discretizations with good linear stability properties 
or even with nonlinear inner-product stability properties are not sufficient.
Strong stability preserving (SSP) time-discretizations  were developed
to address this need. SSP multistep and Runge--Kutta  methods satisfy the strong stability property \eqref{eqn:monotonicity} for any function $F$, any initial condition, 
and any convex functional $\| \cdot \|$ under some time-step restriction,  provided only that \eqref{eqn:FEmonotonicity} is satisfied.

Recently, there has been interest in exploring the SSP properties of multiderivative Runge--Kutta methods,
also known as multistage multiderivative methods.
Multiderivative Runge--Kutta (MDRK) methods were first considered in 
\cite{obreschkoff1940,Tu50,StSt63,shintani1971,shintani1972,KaWa72,KaWa72-RK,
mitsui1982,ono2004, tsai2010},  and later 
explored for use with  partial differential equations (PDEs) \cite{sealMSMD2014,tsai2014, LiDu2016a,LiDu2016b,LiDu2018}.
These methods have a form similar to Runge--Kutta methods but use an additional derivative 
$  \dot{F} = u_{tt} \approx U_{tt} $ to allow for higher order.
The  SSP properties of these methods were discussed  in \cite{MSMD, Nguyen-Ba2010}.
In \cite{MSMD}, a method is defined as SSP-SD if it satisfies the strong stability property \eqref{eqn:monotonicity} 
for any function $F$, any initial condition,  and any convex functional $\| \cdot \|$ under some time-step restriction,  
provided  that \eqref{eqn:FEmonotonicity} is satisfied for any $ \Delta t \leq  \Delta t_{FE}$  and an additional condition of the form 
\[ \|u^n +   \Delta t^2 \tilde{F}(u^n) \| \leq \| u^n  \| \] is satisfied for any 
$ \Delta t \leq \tilde{K} \Delta t_{FE}$. These conditions allow us to find a wide variety of  time-discretizations, called SSP-SD time-stepping methods,
but they limit the type of spatial discretization that can be used in this context.

In this paper, we present a different approach to the SSP analysis, which is more along the lines 
of the idea in \cite{Nguyen-Ba2010}. For this analysis, we use, as before, the forward Euler base 
condition \eqref{eqn:FEmonotonicity}, but add to it a Taylor series condition of the form  
\[ \|u^n + \dt F(u^{n})  +   \frac{1}{2} \Delta t^2 \tilde{F}(u^n) \| \leq \| u^n  \|  \] 
that holds for any   $ \Delta t \leq K \Delta t_{FE}$.
Compared to those studied in \cite{MSMD}, this pair of base conditions allows for more flexibility in the choice of spatial
discretizations  (such as the methods that satisfy a Taylor series condition in  \cite{DumbserBalsara1,QiuDumbserShu,DaruTenaud,sealMSMD2014}), 
at the cost of more limited variety of time discretizations. 
We call the methods that preserve the strong stability properties of these base conditions strong stability preserving Taylor series  
(SSP-TS) methods.
The goal of this paper is to study a class of methods that is suitable for
use with existing spatial discretizations, and present families of such SSP-TS
methods that are optimized for the relationship between the forward-Euler time-step
$\DtFE$ and the Taylor series timestep $K \Delta t_{FE}$.

In the following subsections we describe SSP Runge--Kutta time discreizations and 
present explicit multistage two-derivative methods. We then motivate the need for methods that preserve the nonlinear stability properties of
  the forward Euler and Taylor series base conditions.
In Section  2 we formulate the SSP optimization problem for finding  explicit two-derivative methods which can be written as the convex combination
of forward Euler and Taylor series steps with the  largest allowable time-step, which we will later use to find optimized methods. In Subsection
2.1 we explore the relationship between SSP-SD methods and SSP-TS methods. 
In Subsection 2.2 we prove that there are order barriers associated with explicit  two-derivative methods that preserve the properties
of forward Euler and Taylor series steps with a positive time-step. In Section 3 we present the SSP coefficients of the optimized methods we obtain.
The methods themselves can be downloaded from our github repository \cite{SSPTSgithub}. 
In Section 4 we demonstrate how these methods perform on specially selected test cases, and in Section 5 we present our conclusions.

\subsection{SSP methods} \label{SSPmethods}

It is well-known  \cite{shu1988b, SSPbook2011} that some multistep and Runge--Kutta methods can be decomposed into convex combinations
of forward Euler steps, so that any  convex functional  property satisfied by \eqref{eqn:FEmonotonicity} will be {\em preserved}
by these higher-order time discretizations. If we re-write the $s$-stage explicit Runge--Kutta method in the Shu-Osher
form  \cite{shu1988},
\begin{eqnarray}
\label{rkSO}
y^{(0)} & =  & u^n, \nonumber \\
y^{(i)} & = & \sum_{j=0}^{i-1} \left( \aij y^{(j)} +
\dt \bij F(y^{(j)}) \right), \; \; \; \; i=1, . . ., s\\
 u^{n+1} & = & y^{(s)}  \nonumber
\end{eqnarray}
it is clear that if all the coefficients $\aij$ and $\bij$ are non-negative, and provided $\aij$ is zero only if 
its corresponding $\bij$ is zero, then each stage can be written as a convex combination of forward Euler 
steps of the form \eqref{eqn:FEmethod}, and be bounded by:
 \begin{eqnarray*}
\| y^{(i)}\| & =  & 
\left\| \sum_{j=0}^{i-1} \left( \aij y^{(j)} + \dt \bij F(y^{(j)}) \right) \right\|   \\
& \leq &  \sum_{j=0}^{i-1} \aij  \, \left\| y^{(j)} + \dt \frac{\bij}{\aij} F(y^{(j}) \right\|  
 \leq  \sum_{j=0}^{i-1} \aij  \, \left\| y^{(j)}   \right\|  \\
\end{eqnarray*}
under the condition  $ \frac{\bij}{\aij}  \dt \leq \DtFE$. By the  consistency condition $\sum_{j=0}^{i-1} \aij =1$,
we now  have $ \| u^{n+1}\| \leq \| u^{n}\| $, under the condition  
\begin{eqnarray}
	\label{rkSSP}
	\dt \leq \sspcoef  \DtFE \; \; \; \;  \mbox{where} \; \;  \sspcoef = \min_{i,j}  \frac{\aij}{\bij}
\end{eqnarray}
where if  any of the $\beta$s  are equal to zero, the corresponding  ratios are considered infinite.

 If a method can be decomposed into such a convex combination of \eqref{eqn:FEmethod}, with a positive value of 
$\sspcoef>0$ then the method is called {\em strong stability preserving} (SSP), and the value $\sspcoef$ is called
the  {\em SSP coefficient}. 
SSP methods  guarantee the  strong stability properties of any  spatial discretization, 
provided {\em only} that these properties are satisfied when using the forward Euler method.
The convex combination approach guarantees that 
the intermediate stages in a Runge--Kutta method  satisfy the desired strong stability property as well.
The convex combination approach clearly provides a sufficient condition for preservation of strong stability.
Moreover, it has also be shown that this condition is necessary
 \cite{ferracina2004, ferracina2005,higueras2004a, higueras2005a}.
 
 Second and third order explicit Runge--Kutta methods \cite{shu1988} 
 and later fourth order methods \cite{SpiteriRuuth2002, ketcheson2008} were found that 
 admit such a convex combination decomposition with $\sspcoef>0$.
 However, it has been proven that explicit  Runge--Kutta methods with positive SSP coefficient cannot be more than
fourth-order accurate \cite{kraaijevanger1991,ruuth2001}.
 
The  time-step restriction \eqref{rkSSP} is comprised of two
distinct factors: (1) the term $\DtFE$ that is a property of the spatial discretization, and 
(2) the SSP coefficient $\sspcoef$ that is a property of the time-discretization. 
Research on  SSP  time-stepping methods for hyperbolic PDEs has primarily focused on 
finding high-order time discretizations with the largest allowable time-step  $\Dt \le \sspcoef \DtFE$ by maximizing
the   {\em SSP coefficient} $\sspcoef$ of the method.

High order methods can also be obtained by adding 
more steps (e.g. linear multistep methods)   or more derivatives (Taylor series methods).
Multistep  methods that are SSP have been found \cite{SSPbook2011}, and 
explicit multistep SSP methods exist of very high order  $p>4$, but have 
severely restricted SSP coefficients \cite{SSPbook2011}. 
These approaches can be combined with Runge--Kutta methods to obtain methods with multiple steps, 
and stages. Explicit multistep multistage methods that are SSP and have order $p>4$ have been developed as well \cite{tsrk,msrk}.

\subsection{Explicit Multistage two-derivative methods} \label{MSMDoc}


Another way to obtain higher order methods is to use higher derivatives combines with 
the Runge--Kutta approach. An explicit multistage two-derivative time integrator is given by:
\begin{eqnarray}
\label{MSMD}
y^{(i)} & = &  u^n +  \dt \sum_{j=1}^{i-1}  a_{ij} F(y^{(j)}) +
\dt^2   \sum_{j=1}^{i-1}   \hat{a}_{ij} \dot{F}(y^{(j)}) , \; \; \; \; i=2, . . ., s \\
 u^{n+1} & = &  u^n +  \dt \sum_{j=1}^{s} b_{j} F(y^{(j)}) +
\dt^2  \sum_{j=1}^{i-1}   \hat{b}_{j} \dot{F}(y^{(j)}) , \nonumber
\end{eqnarray}
where  $y^{(1)}  =  u^n $.

The coefficients can be put  into matrix vector form, where
\[ A = \left( \begin{array}{llll}
 0 & 0 &  \vdots & 0 \\
 a_{21} & 0 & \vdots & 0  \\
 \vdots & \vdots & \vdots & \vdots \\
 a_{s1} & a_{s2} & \vdots & 0\\
 \end{array} \right) , 
 \; \; 
 \hat{A} = \left( \begin{array}{llll}
 0 & 0 &  \vdots & 0 \\
 \hat{a}_{21} & 0 & \vdots & 0  \\
 \vdots & \vdots & \vdots & \vdots \\
 \hat{a}_{s1} & \hat{a}_{s2} & \vdots & 0 \\
 \end{array} \right) , \; \; 
b = \left(  \begin{array}{llll}  b_1 \vspace{0.1in} \\ b_2  \vspace{0.1in}
 \\ \vdots  \\ b_s \vspace{0.1in} \\ \end{array} \right) , \; \;
\hat{b} =\left(\begin{array}{llll}  \hat{b}_1 \vspace{0.1in} \\  \hat{b}_2  \vspace{0.1in}\\ \vdots \\  \hat{b}_s \vspace{0.1in} \\ \end{array} \right) .
  \] 
  We also define the vectors  $c = A \ve$ and $\hat{c} = \hat{A} \ve$, where $\ve$ is a vector of ones.

As in our prior work \cite{MSMD}, we focus on using explicit multistage two-derivative methods as time integrators for evolving
hyperbolic PDEs. For our purposes, the operator  $F$  is obtained by a spatial discretization of the term $U_t= -f(U)_x$ to obtain the 
system $u_t = F(u)$. Instead of  computing the second derivative term $\dot{F}$  directly from the definition of the spatial discretization $F$,
we approximate $\tilde{F} \approx \dot{F}$ by employing the Cauchy-Kowalevskaya procedure which
 uses the  PDE \eqref{pde} to replace the time derivatives by the spatial derivatives, and discretize these in space.

If the term $F(u)$ is   computed  using a conservative spatial  discretization $D_x$ applied to the flux: 
\begin{equation}
\label{eqn:F}
	F(u) = - D_x(f(u)),
\end{equation}
then we approximate the second derivative
\begin{equation}
\label{eqn:Ft}
	\dot{F}(u) = u_{tt} \approx U_{tt} = -f(U)_{xt} = - \left(f(U)_t \right)_x =   
- \left(f'(U) U_t \right)_x \approx 
- \tilde{D}_x \left( f'(u) u_t \right) = \tilde{F}(u),
\end{equation}
where a (potentially different) spatial differentiation operator $\tilde{D}_x$ is used.
Although these two approaches are different, the differences between them 
are of  high order in space, so that in practice, as long as the 
spatial errors are smaller than the temporal errors, we 
see the correct order of accuracy in time, as shown in \cite{MSMD}.

\subsection{Motivation for the new base conditions for SSP analysis}
In \cite{MSMD} we considered explicit multistage two-derivative methods and developed sufficient conditions for a type of
strong stability preservation  for these methods. We showed that explicit SSP-SD methods within this class can break this well-known order barrier for 
explicit Runge--Kutta methods.  In that work we considered two-derivative methods that preserve the strong stability property
satisfied by a function $F$ under a convex functional $\| \cdot \|$, provided that the conditions:
\begin{eqnarray} \label{FE}
	\mbox{\bf Forward Euler condition:} \;\;  \|u^n +\Delta t F(u^n) \| \leq \| u^n  \| 
	 \;\;   \mbox{for}   \;\;  \Delta t \leq \Delta t_{FE},
\end{eqnarray}
and 
\begin{eqnarray}  \label{SD}
\mbox{\bf Second derivative condition:}  \;\;   
\|u^n +   \Delta t^2 \tilde{F}(u^n) \| \leq \| u^n  \|  \;\;  \mbox{for}   \;\;    \Delta t \leq \tilde{K} \Delta t_{FE},
\end{eqnarray}
where $\tilde{K}$ is a scaling factor that compares the stability condition of the second derivative term
to that of the forward Euler term.
While the forward Euler condition is characteristic of all SSP methods (and has been justified by 
the observation that it is the circle contractivity condition in \cite{ferracina2004}), the second derivative condition
was chosen over the Taylor series condition: 
\begin{eqnarray}  \label{TS}
\mbox{\bf Taylor series condition:} \;\;   
\|u^n +  \Delta t F(u^n) +  \frac{1}{2} \Delta t^2 \tilde{F}(u^n) \| \leq \| u^n  \|  \;\;   \mbox{for}  \;\;    \Delta t \leq K \Delta t_{FE},
\end{eqnarray}
because it is more general. If the forward Euler \eqref{FE} and second derivative \eqref{SD} conditions are both satisfied, 
then the Taylor series condition \eqref{TS} will   be satisfied as well. Thus,
a spatial discretization that satisfies \eqref{FE} and \eqref{SD} will also satisfy \eqref{TS},
so that the SSP-SD concept in \cite{MSMD} allows for the most general time-discretizations. 
Furthermore, some methods of interest and importance in the literature 
cannot be written using a Taylor series  decomposition, most notably the unique 
two-stage fourth order method\footnote{Note 
that here we use $\dot{F}$ to indicate that these methods are designed for the exact time derivative of $F$. 
However, in practice we use the approximation $\tilde{F}$ as explained above.}
\begin{subequations} \label{2s4p}
\begin{eqnarray} 
y^{(1)} & = & u^n + \frac{1}{2} \dt F(u^n) + \frac{1}{8} \dt^2 \dot{F}(u^n) \\
u^{n+1} & = & u^n + \dt F(u^n) + \frac{1}{6} \dt^2 \dot{F}(u^n) +  \frac{1}{3} \dt^2 \dot{F}(y^{(1)}) 
\end{eqnarray}
\end{subequations}
which appears commonly in the literature on this subject \cite{LiDu2016a, LiDu2016b, LiDu2018}.
For these reasons, it made sense to first consider the SSP-SD property which relies on the
 pair of base conditions  \eqref{FE} and \eqref{SD}.

However, as we will see in the  example below, there are spatial discretizations for which the 
second derivative condition \eqref{SD} is not satisfied but the 
forward Euler condition \eqref{FE} and the Taylor series condition  \eqref{TS} are both satisfied.  
In such cases, the SSP-SD methods derived in \cite{MSMD} may not preserve the desired strong stability properties. 
{\bf The existence of such spatial discretizations is the main motivation for the current work}, in which we
re-examine the strong stability properties of the explicit two-derivative multistage method \eqref{MSMD} using the base 
 conditions \eqref{FE} and \eqref{TS}. 
 Methods that preserve the strong stability properties of \eqref{FE} and \eqref{TS} are called herein SSP-TS methods.
 The SSP-TS approach increases our flexibility in the choice of spatial discretization over the SSP-SD approach. 
 Of course, this enhanced flexibility in the choice of spatial discretization  is expected to result in limitations on 
 the time-discretization (e.g. the two-stage fourth order  method is SSP-SD but not SSP-TS).
 
To illustrate the need for time-discretizations that preserve the strong stability properties of spatial discretizations that satisfy 
\eqref{FE} and \eqref{TS}, but not \eqref{SD}, consider the one-way wave equation 
\[ U_t = U_x  \]
(here $f(U) = U $) where $F$ is defined  by the first-order upwind method 
\begin{equation}
\label{eqn:low-order-a}
	F(u^n)_j := \frac{1}{\dx} \left(u^n_{j+1} - u^n_j \right) \approx U_x( x_j ).
\end{equation}
When solving the PDE, we compute the operator $\tilde{F}$ by simply applying the  differentiation operator twice  
(note that $f'(U) = 1$)
\begin{equation}
\label{eqn:low-order-b}
 \dot{F} \approx \tilde{F} := \frac{1}{\dx^2} \left( u^n_{j+2}- 2 u^n_{j+1} + u^n_{j} \right).
 \end{equation}

We note that when computed this way, the spatial discretization $F$ coupled with forward Euler satisfies the total variation diminishing 
(TVD) condition:
\begin{eqnarray}
\label{FEex}
\| u^n + \Delta t F(u^n)  \|_{TV} \leq \|u^n \|_{TV} \; \;  \; \;  \mbox{for} \; \; \; \; \Delta t \leq \Delta x, 
\end{eqnarray}
while the Taylor series term using $F$ and $\tilde{F}$ satisfies the TVD
\begin{eqnarray}
\label{TSex}
\| u^n + \Delta t F(u^n) + \frac{1}{2}  \Delta t^2 \tilde{F}(u^n)  \|_{TV} \leq \|u^n \|_{TV} \; \;  \; \;  \mbox{for} \; \; \; \; \Delta t \leq \Delta x. 
\end{eqnarray}
In other words, these spatial discretizations satisfy the conditions \eqref{FE} and \eqref{TS} with $K=1$, in the total variation semi-norm. However, 
\eqref{SD} is not satisfied, so the methods derived in \cite{MSMD} cannot be used.
Our goal in the current work is to develop time discretizations that will preserve the desired  strong stability properties (e.g. the total
variation diminishing property) when using   spatial discretizations such as the upwind approximation 
\eqref{eqn:low-order-b}   that satisfy \eqref{FE} and \eqref{TS} but  not  \eqref{SD}.
 
 \begin{rmk}
 This simple first order motivating example is chosen because these spatial discretizations are provably TVD
 and allow us to see clearly why the Taylor series base condition \eqref{TS} is needed. In practice, we use higher order
spatial discretizations  such as WENO that do not have a theoretical guarantee of TVD, but perform well in practice. 
Such methods are considered in Examples 2 and 4 in the numerical tests, and provide us with similar results. 
 \end{rmk}
  
In this work we develop explicit  two-derivative multistage SSP-TS methods of the form \eqref{MSMD} that 
preserve the convex functional properties  of forward Euler  and Taylor series terms.
When  the spatial discretizations $F$ and $\tilde{F}$ that satisfy \eqref{FE} and  \eqref{TS} are coupled with such a
  time-stepping method,  the strong stability condition
  \[\|u^{n+1} \| \leq \| u^n \| \]
  will be preserved, perhaps under a different time-step condition
  \begin{equation}
  \dt \leq \sspcoef_{TS} \DtFE .
  \end{equation}
  If a method can be decomposed in such a way, with $\sspcoef_{TS} > 0$ we say that it is SSP-TS.  
  In the next section, we define an optimization problem that will allow us to find SSP-TS methods of the form
   \eqref{MSMD} with the largest possible SSP coefficient $\sspcoef_{TS}$.

\section{SSP Explicit Two-Derivative Runge--Kutta Methods} \label{SSPopt}

We consider the system of ODEs 
\begin{eqnarray} 
u_t = F(u)
\end{eqnarray}
resulting from a semi-discretization of the  hyperbolic conservation law \eqref{pde}
such that  $F$ satisfies  the {\em forward Euler} (first derivative) condition \eqref{FE}
\begin{eqnarray*}
	\mbox{\bf Forward Euler condition:} \;\;  \|u^n +\Delta t F(u^n) \| \leq \| u^n  \| 
	 \;\;   \mbox{for}   \;\;  \Delta t \leq \Delta t_{FE},
\end{eqnarray*}
for the desired stability property indicated by the convex functional $\| \cdot \|$.  

The methods we are interested in also require an appropriate approximation 
to the  second derivative in time \[ \tilde{F}  \approx \dot{F}  = u_{tt}.\]  
We assume in this work that $F$ and $\tilde{F}$  satisfy an additional condition
of the form \eqref{TS}
\begin{eqnarray*} 
\mbox{\bf Taylor series condition:} \;\;   
\|u^n +  \Delta t F(u^n) +  \frac{1}{2} \Delta t^2 \tilde{F}(u^n) \| \leq \| u^n  \|  \;\;   \mbox{for}  \;\;    \Delta t \leq K \Delta t_{FE},
\end{eqnarray*}
in the same convex functional $\| \cdot \|$,
where $K$ is a scaling factor that compares the stability condition of the Taylor series term
to that of the forward Euler term.

We wish to show that given conditions  \eqref{FE} and \eqref{TS}, 
the multi-derivative  method \eqref{MSMD} satisfies the desired monotonicity condition
under a given time-step. This is easier if we re-write the method 
\eqref{MSMD} in an equivalent matrix-vector form 
\begin{eqnarray} \label{MSMDvector}
\vy = \ve u^n + \Delta t S F(\vy) +  \Delta t^2 \hat{S} \tilde{F}(\vy),
\end{eqnarray}
where $\vy = \left(y^{(1)}, y^{(2)},  . . ., y^{(s)}, u^{n+1}\right)^T$, 
\[ 
	S= \left[ \begin{array}{ll} A & \textbf{0} \\ b^T & 0 \end{array} \right] \; \; \; \; \;
	\mbox{and} \; \; \; \; \; \hat{S}= \left[ \begin{array}{ll} \hat{A} & \textbf{0} \\ \hat{b}^T & 0  \end{array}  \right]
\] and $\ve$ is a vector of ones. As in prior SSP work, all the coefficients in $S$ and $\hat{S}$ must be non-negative
(see Lemma \ref{poscoef}).

We can now easily establish sufficient conditions for an  explicit method of the form \eqref{MSMDvector} to be SSP:
\begin{thm} \label{thm1} 
Given spatial discretizations $F$ and $\tilde{F}$ that satisfy \eqref{FE} and \eqref{TS},
an explicit two-derivative multistage method  of the form \eqref{MSMDvector} preserves  the strong stability property 
$ \| u^{n+1} \| \leq \|u^n \|$  under the time-step restriction $\Delta t \leq r \DtFE$ if it satisfies the conditions
\begin{subequations} 
\begin{align} 
 \left( I +r S  +  \frac{2 r^2}{K^2} \left( 1 - K \right)  \hat{S} \right)^{-1} \ve  \geq  0   \label{SSPcondition1} \\
r  \left( I +r S  +  \frac{2 r^2}{K^2} \left( 1 - K \right)  \hat{S} \right)^{-1}      \left(S-  \frac{2 r}{K} \hat{S}  \right)
 \geq  0   \label{SSPcondition2} \\
\frac{2 r^2}{K^2}  \left( I +r S  +  \frac{2 r^2}{K^2} \left( 1 - K \right)  \hat{S} \right)^{-1}   \hat{S} 
\geq 0   \label{SSPcondition3}
 \end{align} 
\end{subequations}
for some $r>0$. In the above conditions, 
the inequalities are understood component-wise. 
\end{thm}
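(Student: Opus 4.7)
The plan is to recast the method in a Shu--Osher-type decomposition that exhibits each stage as a convex combination of the previous solution $u^n$, a forward Euler step, and a Taylor series step applied to earlier stages. Concretely, I seek a nonnegative column vector $\alpha$ and nonnegative matrices $\Lambda_F, \Lambda_T$ such that
\begin{equation*}
\vy = \alpha u^n + \Lambda_F\bigl(\vy + \tau_1 F(\vy)\bigr) + \Lambda_T\bigl(\vy + \tau_2 F(\vy) + \tfrac12 \tau_2^2 \tilde F(\vy)\bigr),
\end{equation*}
where I take $\tau_1 := \Delta t/r$ and $\tau_2 := K\Delta t/r$. Under the hypothesis $\Delta t \le r \Delta t_{FE}$ one has $\tau_1 \le \Delta t_{FE}$ and $\tau_2 \le K\Delta t_{FE}$, so the base conditions \eqref{FE} and \eqref{TS} bound the bracketed expressions above componentwise by $\|y^{(j)}\|$.

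To identify $\alpha, \Lambda_F, \Lambda_T$, I would match coefficients of $u^n$, $F(\vy)$, and $\tilde F(\vy)$ against the original form \eqref{MSMDvector}. Setting $M := I - \Lambda_F - \Lambda_T$, this yields
\begin{align*}
\alpha &= M \ve, \\
\tau_1 \Lambda_F + \tau_2 \Lambda_T &= \Delta t\, M S, \\
\tfrac12 \tau_2^2 \Lambda_T &= \Delta t^2 M \hat S,
\end{align*}
from which $\Lambda_T = \tfrac{2r^2}{K^2} M \hat S$ and $\Lambda_F = r M\bigl(S - \tfrac{2r}{K}\hat S\bigr)$. Adding these and using $M + (\Lambda_F + \Lambda_T) = I$ produces the single identity
\begin{equation*}
M \bigl(I + r S + \tfrac{2r^2}{K^2}(1-K)\hat S\bigr) = I,
\end{equation*}
so $M$ is precisely the inverse matrix appearing in the statement. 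Substituting $M$ back into the formulas for $\alpha, \Lambda_F, \Lambda_T$ recovers exactly the expressions on the left-hand sides of \eqref{SSPcondition1}--\eqref{SSPcondition3}, and the consistency relation $\alpha + (\Lambda_F + \Lambda_T)\ve = M\bigl(I + r S + \tfrac{2r^2}{K^2}(1-K)\hat S\bigr) \ve = \ve$ drops out automatically.

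It then remains to argue that $M$ is well-defined and that the decomposition delivers the stated bound. Since the method is explicit, $S$ and $\hat S$ are strictly lower triangular, hence so is $rS + \tfrac{2r^2}{K^2}(1-K)\hat S$, and $I$ plus this matrix is always invertible; the same structure makes $\Lambda_F$ and $\Lambda_T$ strictly lower triangular. With the three hypotheses guaranteeing $\alpha, \Lambda_F, \Lambda_T \ge 0$ componentwise and consistency established, each stage is a genuine convex combination of $u^n$ and of base-condition updates on strictly earlier stages. Applying the triangle inequality row by row together with \eqref{FE} and \eqref{TS}, and inducting on the stage index starting from $y^{(1)} = u^n$, delivers $\|y^{(i)}\| \le \|u^n\|$ for every $i$, and in particular $\|u^{n+1}\| \le \|u^n\|$.

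The main delicate point is finding the right Shu--Osher ansatz in the first place: one has to choose $\tau_1, \tau_2$ so the two base conditions apply at precisely the admissible time-step, and then notice that a single matrix inverse $M$ governs all three coefficients. Once that structure is identified, the positivity requirements \eqref{SSPcondition1}--\eqref{SSPcondition3} are honest and independent constraints, since $M$ itself need not be nonnegative (when $K > 1$ the matrix being inverted carries negative entries) and $S - \tfrac{2r}{K}\hat S$ can also carry negative entries, so each of the three hypotheses rules out a distinct way in which nonnegativity of the Shu--Osher coefficients could fail.
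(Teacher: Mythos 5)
Your proposal is correct and follows essentially the same route as the paper: you reconstruct the canonical Shu--Osher form by matching coefficients (rather than by adding $rS\vy$ and $2\hat{r}(\hat{r}-r)\hat{S}\vy$ to both sides as the paper does), and your $M$, $\Lambda_F$, $\Lambda_T$ coincide exactly with the paper's $R$, $P$, $Q$ after the substitution $\hat{r}=r/K$. The convexity, consistency, and induction arguments match the paper's, and your added remarks on invertibility via the strictly lower triangular structure are a small but welcome bonus.
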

\begin{proof} We begin with the method 
\[ \vy = \ve u^n + \Delta t S F(\vy) +  \Delta t^2 \hat{S} \tilde{F}(\vy), \]
and 
add the terms $r S \vy$ and $2\hat{r}(\hat{r}-r) \hat{S} \vy$ to both sides to obtain the
{\em canonical Shu-Osher form} of an explicit two-derivative multistage method:
\begin{eqnarray*}
\left( I +r S  + 2 \hat{r} ( \hat{r}-r)  \hat{S} \right)  \vy &=& u^n \ve + r (S - 2 \hat{r} \hat{S} ) \left( \vy + \frac{\Delta t}{r}  F(\vy) \right) \\
 && + 2  \hat{r}^2 \hat{S}  \left( \vy   + \frac{\Delta t}{\hat{r}}  F(\vy) + \frac{\Delta t^2}{2 \hat{r}^2} \tilde{F}(\vy) \right),   \\
 \vy & = & R (\ve u^n) + P \left( \vy + \frac{\Delta t}{r}  F(\vy) \right) + Q  \left( \vy + \frac{\Delta t}{\hat{r}}  F(\vy) +  \frac{\Delta t^2}{2 \hat{r}^2} \tilde{F}(\vy) \right),
\end{eqnarray*}
where \[
	R  = \left( I +r S  + 2 \hat{r} ( \hat{r}-r)  \hat{S} \right)^{-1}, \; \; \; \; \; \;
	P  =  r R \left(S- 2 \hat{r} \hat{S}  \right),  \; \; \; \; \; \;
	Q =   2 \hat{r}^2 R  \hat{S}. 
\]
If the elements of $P$, $Q$, and $R \ve$ are all non-negative, and if $ \left(R + P + Q\right) \ve = \ve$, then $\vy$ is 
 a convex combination of strongly stable terms 
\[  \| \vy \| \leq  R \|\ve u^n\| + P \| \vy + \frac{\Delta t}{r}  F(\vy) \| + Q  \| \vy + \frac{\Delta t}{\hat{r}}  F(\vy) +   \frac{\Delta t^2}{2\hat{r}^2} \tilde{F}(\vy) \| ,\]
and so is also strongly stable  under the time-step restrictions $ \dt \leq r \DtFE$ and $ \dt \leq K \hat{r}  \DtFE$.
In such cases, the optimal time-step is given by the minimum of the two. In the cases we encounter here, this minimum occurs 
when these two values are set equal,  so we require  $r=  K \hat{r}$.
Conditions   \eqref{SSPcondition1}--\eqref{SSPcondition3} now ensure that $P \geq 0 $, $Q\geq 0$, and $R \ve \geq 0$  component-wise for 
$\hat{r} = \frac{r}{K}$, and so the  method  preserves the strong stability condition 
$\| u^{n+1} \| \leq \| u^n \|$ under the time-step restriction
$\Delta t \leq r \Delta t_{FE} $.  Note that if this fact holds for a given value of $r>0$ then it also holds for all smaller positive values.
\end{proof}

\begin{defn} \label{def1} 
A method that satisfies the conditions in Theorem \ref{thm1} for values $r \in (0, r_{max}]$ 
is called a  {\em Strong Stability Preserving Taylor Series} (SSP-TS) method with an associated SSP  coefficient
\[ \sspcoef_{TS} = r_{max}.\]
\end{defn}
\begin{rmk}
Theorem \ref{thm1}  gives us the conditions for the method \eqref{MSMDvector} to be SSP-TS for any time-step $\dt \leq \sspcoef_{TS} \DtFE$. 
We note, however, that while the corresponding conditions for Runge--Kutta methods have been shown to be necessary as
well as sufficient, for the multi-derivative methods we  only show that these conditions are sufficient.
This is a consequence of the fact that we define this notion of SSP based on the  conditions \eqref{FE} and \eqref{TS}, but if a spatial
discretization also satisfies a different condition (for example, \eqref{SD})
 many other methods of the form \eqref{MSMDvector} also give strong stability preserving results.
Notable among these is the two-derivative two-stage fourth order method \eqref{2s4p} which is SSP-SD but not SSP-TS.
This means that solutions of \eqref{2s4p} can be shown to  satisfy the strong stability 
property $ \| u^{n+1} \| \leq \|u^n \|$ for positive time-steps, for the appropriate spatial discretizations,
  even though the conditions in Theorem \ref{thm1} are not satisfied.
\end{rmk}

This result allows us to formulate the search for optimal SSP-TS methods as an optimization problem, as in 
 \cite{MSMD, SSPbook2011, ketcheson2008, ketcheson2009} 
\begin{center}
\noindent
{\bf  Find the coefficient matrices $S$ and $\hat{S}$ \\
that maximize the value of
$\sspcoef_{TS} = \max r$ \\ such that the relevant order conditions (summarized in Appendix \ref{MDRKOrderCondition}) \\
and the SSP conditions 
\eqref{SSPcondition1}-\eqref{SSPcondition3}
are all satisfied. }\\
\end{center}

However, before we present the optimal
methods  in Section \ref{optimalmethods},
we present the theoretical results on the allowable order of multistage multi-derivative SSP-TS methods.


\subsection{SSP results for explicit  two-derivative Runge--Kutta methods}
In this paper, we consider explicit SSP-TS two-derivative multistage methods that  can 
be decomposed into a convex combination of \eqref{FE} and \eqref{TS}, and thus
preserve their strong stability properties.  
In our previous work \cite{MSMD} we studied SSP-SD methods of the form \eqref{MSMD} that can be written as convex combinations of
  \eqref{FE} and \eqref{SD}.  
The following lemma explains the relationship between these two notions of strong stability.
 \begin{lem}
 \label{lemma2}
Any  explicit method of the form \eqref{MSMD} that can be written as a convex combination of the forward Euler formula \eqref{FE} and the Taylor series 
  formula \eqref{TS} can also be written as a convex combination of the forward Euler formula \eqref{FE} and the second derivative
  formula \eqref{SD}.
    \end{lem}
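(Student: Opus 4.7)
The plan is to exhibit a one-line algebraic identity that splits a single Taylor series building block into an equal-weight convex combination of a forward Euler block and a pure second derivative block, and then to apply this splitting term by term inside the assumed FE+TS decomposition. The identity I have in mind is
$$y + \tau F(y) + \tfrac{1}{2}\tau^{2}\tilde{F}(y) \;=\; \tfrac{1}{2}\bigl(y + 2\tau F(y)\bigr) + \tfrac{1}{2}\bigl(y + \tau^{2}\tilde{F}(y)\bigr),$$
which is verified by matching the coefficients of $y$, $F(y)$, and $\tilde{F}(y)$ on each side. It shows that a Taylor series step with parameter $\tau$ is itself a convex combination, with equal weights $1/2$, of a forward Euler step at doubled step size $2\tau$ and a pure second derivative step with the same parameter $\tau$.

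My steps are: first, start from the assumed convex combination representation of each stage of \eqref{MSMD}, in which every building block is either an identity contribution, a forward Euler block of the form $y^{(j)} + \sigma F(y^{(j)})$, or a Taylor series block of the form on the left-hand side above, with non-negative weights summing to one stage-wise. Second, apply the identity to each Taylor series block: a weight $\alpha$ on such a block is replaced by a weight $\alpha/2$ on a forward Euler block (at doubled step size) plus a weight $\alpha/2$ on a pure second derivative block. Third, collect terms and verify that the resulting expression is a convex combination of identity, forward Euler, and second derivative building blocks: the stage-wise sum of weights is preserved because $\alpha/2 + \alpha/2 = \alpha$, and non-negativity of the weights is preserved because halving a non-negative number is non-negative.

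I do not anticipate a serious obstacle; the algebraic identity does all the work and what remains is bookkeeping in the Shu--Osher style form used in Theorem \ref{thm1}. The only point that warrants minor care is confirming that the decomposition produced by the splitting fits the precise form of a convex combination of \eqref{FE} and \eqref{SD} building blocks used in the SSP-SD framework of \cite{MSMD}, and in particular that the step-size scalings appearing inside the new forward Euler and pure second derivative blocks are recorded correctly. Since the splitting is local to each Taylor series block and does not alter the stage variable $y^{(j)}$ being acted on nor introduce any new coupling between stages, this amounts to a routine check rather than a conceptual difficulty. I note in passing that the SSP coefficient of the resulting FE+SD decomposition will in general differ from $\mathcal{C}_{TS}$, since the new forward Euler pieces carry the doubled step size $2\tau$ and the new second derivative pieces must respect the stability parameter $\tilde{K}$; the lemma only claims existence of the decomposition, not equality of the corresponding SSP coefficients.
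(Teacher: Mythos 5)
Your proof is correct and follows essentially the same route as the paper: the paper's proof rests on the identity $u^n + \dt F(u^n) + \tfrac{1}{2}\dt^2\tilde{F}(u^n) = \alpha\bigl(u^n + \tfrac{\dt}{\alpha}F(u^n)\bigr) + (1-\alpha)\bigl(u^n + \tfrac{1}{2(1-\alpha)}\dt^2\tilde{F}(u^n)\bigr)$ for any $0<\alpha<1$, of which your splitting is exactly the case $\alpha=\tfrac{1}{2}$, and the paper likewise concludes by composing this splitting with the assumed FE--TS convex combination. The only (immaterial) difference is that the paper keeps $\alpha$ as a free parameter rather than fixing it at $\tfrac{1}{2}$.
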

    \begin{proof}
    We can easily see that any Taylor series step can be rewritten as a convex combination of the forward 
    Euler formula \eqref{FE} and the second derivative formula \eqref{SD}:
 \[ u^n +  \dt F(u^n) + \frac{1}{2} \dt^2 \tilde{F}(u^n)  
 = \alpha \left( u^n + \frac{\dt}{\alpha} F(u^n)    \right) + (1-\alpha) \left(    u^n + \frac{1}{2 (1-\alpha)} \dt^2 \tilde{F}(u^n)  \right) , \]
for any $0 < \alpha < 1$.
Clearly then, if a method can be decomposed into a convex combination of  \eqref{FE} and \eqref{TS}, and in turn \eqref{TS}
can be decomposed into a convex combination of \eqref{FE} and \eqref{SD}, then the method itself can be written as a 
convex combination of \eqref{FE} and \eqref{SD}.
 \end{proof}
 
 This result  recognizes that the SSP-TS methods we study  in this paper are  a subset of the  SSP-SD
 methods in  \cite{MSMD}.
  This allows us to use results about SSP-SD methods
  when studying the properties of SSP-TS methods.
 
 The following lemma establishes the Shu-Osher form of an SSP-SD method of the form  \eqref{MSMD}.
 This form allows us to directly observe the convex combination 
 of steps of the form \eqref{FE} and \eqref{SD}, and thus easily  identify the SSP coefficient $\sspcoef_{SD}$. 
 \begin{lem}
If an explicit  method of the form  \eqref{MSMD} written in the {\em Shu-Osher form}
\begin{eqnarray}
\label{MD_Shu-Osher}
y^{(1)} & = & u^n  \nonumber \\
y^{(i)} & = &    \sum_{j=1}^{i-1}  \alpha_{ij}  y^{(j)} + \dt \beta_{ij} F(y^{(j)})  
+   \hat{\alpha}_{ij}  y^{(j)} + \dt^2 \hat{\beta}_{ij}  \tilde{F}(y^{(j)}) , \; \; \; \; i=2, . . ., s+1 \\
 u^{n+1} & = &    y^{(s+1)}  \nonumber
\end{eqnarray}
has the properties that
\begin{enumerate}
\item all the coefficients are non-negative,
\item  $ \beta_{ij}=0$ whenever $\alpha_{ij}=0 $ 
\item $ \hat{\beta}_{ij}=0 $  whenever $\hat{\alpha}_{ij}=0 $ 
\end{enumerate}
then this method preserves the strong stability properties of \eqref{FE} and \eqref{SD} (i.e. 
is SSP-SD) for $\dt \leq \sspcoef_{SD} \DtFE$
 with \[\sspcoef_{SD} = \min_{i,j}  \left\{ \frac{\aij}{\bij}, \tilde{K} \frac{\hat{\alpha}_{ij}}{\hat{\beta}_{ij}} \right\}\].
\end{lem}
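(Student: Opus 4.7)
The plan is to prove this by induction on the stage index $i$, using the Shu-Osher decomposition together with the base conditions \eqref{FE} and \eqref{SD}. This is the standard SSP convex-combination argument adapted to the two-derivative setting.

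First, I would note that the consistency of the scheme forces $\sum_{j=1}^{i-1}\left(\alpha_{ij}+\hat{\alpha}_{ij}\right)=1$ at every stage $i$. Together with hypothesis (1) (non-negativity of all coefficients), this shows that the right-hand side of the Shu-Osher form \eqref{MD_Shu-Osher} is a genuine convex combination of sub-terms, each of which is either a scaled forward-Euler-like term or a scaled second-derivative-like term. Hypotheses (2) and (3) ensure that the ratios $\beta_{ij}/\alpha_{ij}$ and $\hat{\beta}_{ij}/\hat{\alpha}_{ij}$ are well defined whenever they appear: wherever the denominator vanishes, the corresponding summand drops out entirely.

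Next, I would factor each summand so the base conditions can be applied directly. For the first-derivative pieces I would write
\[
\alpha_{ij}y^{(j)}+\Delta t\,\beta_{ij}F(y^{(j)})=\alpha_{ij}\left(y^{(j)}+\Delta t\,\frac{\beta_{ij}}{\alpha_{ij}}F(y^{(j)})\right),
\]
and apply \eqref{FE} to the parenthesized expression; by the base condition its norm is bounded by $\|y^{(j)}\|$ provided $\Delta t\,\beta_{ij}/\alpha_{ij}\le\Delta t_{FE}$, i.e.\ $\Delta t\le(\alpha_{ij}/\beta_{ij})\Delta t_{FE}$. Analogously, for the second-derivative pieces I would write
\[
\hat{\alpha}_{ij}y^{(j)}+\Delta t^{2}\hat{\beta}_{ij}\tilde{F}(y^{(j)})=\hat{\alpha}_{ij}\left(y^{(j)}+\Delta t^{2}\,\frac{\hat{\beta}_{ij}}{\hat{\alpha}_{ij}}\tilde{F}(y^{(j)})\right),
\]
and invoke \eqref{SD} on the parenthesized expression, which gives the bound $\hat{\alpha}_{ij}\|y^{(j)}\|$ under a time-step restriction that reduces to $\Delta t\le\tilde{K}(\hat{\alpha}_{ij}/\hat{\beta}_{ij})\Delta t_{FE}$ in the scaling convention used in \cite{MSMD}.

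With these bounds in hand, the induction closes in one line: assuming $\|y^{(j)}\|\le\|u^{n}\|$ for all $j<i$ and combining the two estimates above by the triangle inequality yields
\[
\|y^{(i)}\|\le\sum_{j=1}^{i-1}\bigl(\alpha_{ij}+\hat{\alpha}_{ij}\bigr)\|y^{(j)}\|\le\|u^{n}\|,
\]
provided $\Delta t$ satisfies all the per-pair restrictions simultaneously, i.e.\ $\Delta t\le\sspcoef_{SD}\Delta t_{FE}$ with $\sspcoef_{SD}$ as stated. Taking $i=s+1$ gives the desired strong stability of $u^{n+1}$.

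The only delicate point, and what I expect to be the main obstacle to formulate cleanly, is the second item: making precise the claim that the base condition \eqref{SD}, which is stated for a step of the form $u+\Delta t^{2}\tilde{F}(u)$ under a restriction on $\Delta t$, also controls the scaled step $u+\Delta t^{2}(\hat{\beta}_{ij}/\hat{\alpha}_{ij})\tilde{F}(u)$ in a way consistent with the linear ratio $\tilde{K}\hat{\alpha}_{ij}/\hat{\beta}_{ij}$ that appears in the stated SSP coefficient. Once the convention used in \cite{MSMD} for relating $\Delta t^{2}$-scalings to linear $\Delta t$ restrictions is invoked, the rest of the proof is a direct application of the triangle inequality and the induction hypothesis.
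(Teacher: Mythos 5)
Your proposal is correct and follows essentially the same route as the paper's proof: a stage-by-stage convex-combination decomposition, the triangle inequality applied with the base conditions \eqref{FE} and \eqref{SD}, and the consistency relation $\sum_{j}(\alpha_{ij}+\hat{\alpha}_{ij})=1$, with you merely making explicit the induction on the stage index that the paper leaves implicit. The scaling subtlety you flag for the $\Delta t^{2}$ term is genuine, but the paper's own proof resolves it exactly as you propose, by simply imposing $\dt\,\hat{\beta}_{ij}/\hat{\alpha}_{ij}\le\tilde{K}\DtFE$ as the per-term restriction.
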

\begin{proof}
For each stage we have 
\begin{eqnarray*}
\| y^{(i)} \| & = &  \left\|  \sum_{j=1}^{i-1}  \alpha_{ij}  y^{(j)} + \dt \beta_{ij} F(y^{(j)})  
+  \hat{\alpha}_{ij}  y^{(j)} + \dt^2 \hat{\beta}_{ij}  \tilde{F}(y^{(j)})  \right\| \\
& \leq &   \sum_{j=1}^{i-1}  \alpha_{ij}  \left\| y^{(j)} + \dt \frac{\beta_{ij}}{\alpha_{ij}} F(y^{(j)})   \right\|
+ \sum_{j=1}^{i-1}  \hat{\alpha}_{ij}  \left\| y^{(j)} + \dt^2 \frac{\hat{\beta}_{ij}}{\hat{\alpha}_{ij}}   \tilde{F}(y^{(j)}) \right\| \\
& \leq & \sum_{j=1}^{i-1}  \left( \alpha_{ij}  + \hat{\alpha}_{ij} \right) \|u^n\| 
\end{eqnarray*}
assuming only that for each one of these $\dt  \frac{\beta_{ij}}{\alpha_{ij}} \leq \DtFE$ and 
$\dt  \frac{\hat{\beta}_{ij}}{\hat{\alpha}_{ij}} \leq  \tilde{K} \DtFE$. The result immediately follows from the fact that
for each $i$ we have $  \sum_{j=1}^{i-1}  \left( \alpha_{ij}  + \hat{\alpha}_{ij} \right) =1$ for consistency.
\end{proof}
 
When a method is written in the block Butcher form \eqref{MSMDvector}, we can decompose it
into a  canonical Shu-Osher form. 
\begin{eqnarray*}
Y&=&(I+rS+\hat{r}\hat{S})^{-1}eu^n + (I+rS+\hat{r}\hat{S})^{-1}S(Y+\frac{\Delta t}{r} F(Y) )+ 
  (I+rS+\hat{r}\hat{S})^{-1}\hat{S}(Y + \frac{\Delta t^2}{\hat{r}} \tilde{F}(Y)).
\end{eqnarray*}
This allows us to define an  SSP-SD method directly from the Butcher coefficients.
  
  \begin{defn} \label{def2} 
Given spatial discretizations $F$ and $\tilde{F}$ that satisfy \eqref{FE} and \eqref{SD},
an explicit two-derivative multistage method  of the form \eqref{MSMDvector} is called 
a Strong Stability Preserving Second Derivative (SSP-SD) method
with and associated  SSP coefficient $\sspcoef_{SD} =   \min\{r_{max}, \tilde{K}\hat{r}_{max} \} $ if it satisfies the conditions
\begin{subequations} 
\begin{align} 
(I+rS+\hat{r}\hat{S})^{-1}e&\geq 0 \label{3}\\
 (I+rS+\hat{r}\hat{S})^{-1}rS& \geq 0 \label{4}\\
  (I+rS+\hat{r}\hat{S})^{-1}\hat{r}\hat{S}&\geq 0. \label{5}
 \end{align} 
\end{subequations}
for all $r = (0, r_{max}]$ and $\hat{r} = (0, \hat{r}_{max}]$. In the above conditions, 
the inequalities are understood component-wise. 
\end{defn}

 The relationship between the coefficients in \eqref{MSMD} and \eqref{MD_Shu-Osher} allows us to conclude that
 the matrices $S$ and $\hat{S}$ must contain only non-negative coefficients.
\begin{lem} \label{poscoef}
If an explicit  method of the form  \eqref{MSMD} can be converted to the Shu-Osher form
 \eqref{MD_Shu-Osher} with all non-negative coefficients $\alpha_{ij}, \beta_{ij}, \hat{\alpha}_{ij}, \hat{\beta}_{ij},$
 for all $i, j$, 
  then the  coefficients $a_{ij}, b_j, \hat{a}_{ij}, \hat{b}_j$ must be all non-negative as well.
\end{lem}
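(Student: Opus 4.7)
The plan is to prove the lemma by induction on the stage index $i$, exhibiting each Butcher coefficient $a_{ij}$ and $\hat{a}_{ij}$ as a non-negative combination of the non-negative Shu--Osher coefficients together with Butcher coefficients from earlier stages. The essential observation is that the Shu--Osher form expresses $y^{(i)}$ in terms of earlier stage values $y^{(j)}$ with $j<i$, whereas the Butcher form expresses it purely in terms of $u^n$, $F(y^{(j)})$, and $\tilde{F}(y^{(j)})$. The conversion from one to the other is simply the recursive substitution of the Butcher expansion of each $y^{(j)}$ into the Shu--Osher expression for $y^{(i)}$.

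The base case $i=1$ is immediate: $y^{(1)}=u^n$, and all Butcher coefficients in the first row are trivially zero. For the inductive step I would assume that, for all $k<i$, the expansion $y^{(k)} = u^n + \dt \sum_{j} a_{kj}\, F(y^{(j)}) + \dt^2 \sum_{j} \hat{a}_{kj}\, \tilde{F}(y^{(j)})$ holds with non-negative $a_{kj}$ and $\hat{a}_{kj}$. Substituting this into the Shu--Osher expression for $y^{(i)}$ and grouping terms, the total multiplier of $u^n$ becomes $\sum_{k}(\alpha_{ik}+\hat{\alpha}_{ik})$, which equals $1$ by the consistency condition (invariance of the stage on the trivial solution where $F\equiv\tilde{F}\equiv 0$). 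Collecting the remaining terms yields the recurrences $a_{ij} = \beta_{ij} + \sum_{k}(\alpha_{ik}+\hat{\alpha}_{ik})\, a_{kj}$ and $\hat{a}_{ij} = \hat{\beta}_{ij} + \sum_{k}(\alpha_{ik}+\hat{\alpha}_{ik})\, \hat{a}_{kj}$. Each right-hand side is a sum of products of non-negative quantities, so $a_{ij}\ge 0$ and $\hat{a}_{ij}\ge 0$, closing the induction. The conclusion for the output coefficients $b_j$ and $\hat{b}_j$ then follows by applying the same identity with $y^{(s+1)}=u^{n+1}$.

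The hard part is not mathematical depth but bookkeeping: one must verify that after substitution only terms proportional to $u^n$, $F(y^{(j)})$, and $\tilde{F}(y^{(j)})$ remain, and that the consistency relation is invoked at exactly the right moment so that the residual multiple of $u^n$ is $1$ rather than an uncontrolled quantity. Notably, no convexity, stability, or time-step argument is invoked; the lemma is purely algebraic and relies only on the fact that non-negativity is preserved under sums and products of non-negative quantities, propagated stage by stage through the triangular structure of the explicit method.
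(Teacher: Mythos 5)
Your proposal is correct and follows essentially the same route as the paper: you derive exactly the recurrences $a_{ij} = \beta_{ij} + \sum_{k}(\alpha_{ik}+\hat{\alpha}_{ik})a_{kj}$ and $\hat{a}_{ij} = \hat{\beta}_{ij} + \sum_{k}(\alpha_{ik}+\hat{\alpha}_{ik})\hat{a}_{kj}$ (and their analogues for $b_j$, $\hat{b}_j$) that the paper states as the transformation between the two forms, and then propagate non-negativity recursively through the triangular structure. The only difference is cosmetic: you spell out the substitution and the consistency condition $\sum_k(\alpha_{ik}+\hat{\alpha}_{ik})=1$ that justify the recurrences, which the paper takes as given.
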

\begin{proof}
The transformation between \eqref{MSMD} and \eqref{MD_Shu-Osher} is given by
$ a_{21} = \beta_{21}$ and $ \hat{a}_{21} = \hat{\beta}_{21} $
and, recursively,
\begin{subequations}
\begin{eqnarray}
a_{ij} & = & \beta_{ij} + \sum_{k=j+1}^{i-1}  ( \alpha_{ik} + \hat{\alpha}_{ik} ) a_{kj} \label{aij} \\
\hat{a}_{ij} & = & \hat{\beta}_{ij} + \sum_{k=j+1}^{i-1}  ( \alpha_{ik} + \hat{\alpha}_{ik} ) \hat{a}_{kj} \label{hataij}  \\
b_{j} & = & \beta_{s+1,j} + \sum_{k=j+1}^{s}  ( \alpha_{s+1,k} + \hat{\alpha}_{s+1,k} ) a_{kj}  \label{bj} \\
\hat{b}_{j} & = & \hat{\beta}_{s+1,j} + \sum_{k=j+1}^{s}  ( \alpha_{s+1,k} + \hat{\alpha}_{s+1,k} ) \hat{a}_{kj}  \label{hatbj}
\end{eqnarray}
\end{subequations}

Clearly then, \[ \beta_{21} \geq 0   \implies  a_{21} \geq 0 \]
and
\[ \hat{\beta}_{21} \geq 0  \implies  \hat{a}_{21} \geq 0 .\]

From there we proceed recursively:  
Given that all $\alpha_{ij} \geq 0$ and $\beta_{ij} \geq 0$ for all $i,j$,  and that  $a_{kj} \geq 0$  and
$\hat{a}_{kj} \geq 0$ for all $1 \leq j < k \leq i-1$, then by the formulae \eqref{aij} and \eqref{hataij} we have
 $a_{ij} \geq 0$ and $\hat{a}_{ij} \geq 0$.
 
 Now given $\alpha_{ij} \geq 0$ and $\beta_{ij} \geq 0$ for all $i,j$ 
 and $a_{kj} \geq 0$  and $\hat{a}_{kj} \geq 0$ for all $1 \leq j < k \leq s$, the formulae \eqref{bj} and \eqref{hatbj} give the result
$b_{j} \geq 0$ and $\hat{b}_{i} \geq 0$.
Thus, all the coefficients  $a_{ij}, \hat{a}_{ij}, b_j, \hat{b}_j$ must be all non-negative.

\end{proof}

We wish to study only those methods  for which the  Butcher form \eqref{MSMD} is unique. 
To do so, we follow Higueras \cite{higueras2009} in extending the 
reducibility definition of  Dahlquist and Jeltsch \cite{Jeltsch2006}.
Other notions of reducibility exist, but for our purposes it is sufficient to define irreducibility as follows:

\begin{defn} A two-derivative multistage method of the form \eqref{MSMD} is {\bf DJ-reducible}
if there exist sets $T_1$ and $T_2$ such that  $T_1 \neq \emptyset$, $T_1 \bigcap T_2 = \emptyset$,
$T_1 \bigcup T_2 = [1,2, . . . , s]$, and 
\[b_j = \hat{b}_j = 0, \; \; \; \; j \in T_1\]
\[a_{ij} = \hat{a}_{ij} = 0, \; \; \; \; i \in T_1, \; j \in T_2.\]
We say a method is  {\bf irreducible} if it is not DJ-reducible.
\end{defn}

 \begin{lem} \label{lemma3}
   An irreducible  explicit SSP-SD method of the form \eqref{MSMDvector} must satisfy 
   the (componentwise) condition 
   \[b+\hat{b} > 0.\]
    \end{lem}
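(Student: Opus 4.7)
My plan is to proceed by contrapositive: assuming some $b_{j^\star}+\hat{b}_{j^\star}=0$, I will construct the partition $T_1,T_2$ that witnesses DJ-reducibility, giving the desired contradiction. Because the method is SSP-SD, Lemma \ref{poscoef} immediately forces every $a_{ij}$, $\hat{a}_{ij}$, $b_j$, $\hat{b}_j$ to be non-negative, so the vanishing sum splits as $b_{j^\star}=\hat{b}_{j^\star}=0$.

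Next I would construct $T_1$ as the set of stages that do not feed into $u^{n+1}$ either directly or through any chain of subsequent stages. Concretely, sweeping from $k=s$ downward, place $k$ in $T_1$ exactly when $b_k=\hat{b}_k=0$ and every $i>k$ with $a_{ik}+\hat{a}_{ik}>0$ is already in $T_1$; let $T_2$ be the complement. The two DJ conditions are then built into the construction: every $j\in T_1$ automatically has $b_j=\hat{b}_j=0$, and the required off-block zeros between $T_1$ and $T_2$ hold because any $i\in T_2$ with $a_{ij}+\hat{a}_{ij}>0$ for some $j\in T_1$ would have been forced into $T_1$ by the recursion, a contradiction.

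The main obstacle is showing $T_1\neq\emptyset$. Taking $j^\star$ itself will not suffice in general, because $j^\star$ may be consumed by some later stage $k>j^\star$ with $b_k+\hat{b}_k>0$, preventing the recursive rule from including it. To clear this I would pass to the Shu-Osher identities \eqref{bj}--\eqref{hatbj}: substituting $b_{j^\star}=\hat{b}_{j^\star}=0$ and using non-negativity of every summand collapses each term to zero, which forces $\beta_{s+1,j^\star}=\hat{\beta}_{s+1,j^\star}=0$ and, for every $k>j^\star$ coupled to $j^\star$ by a positive $a_{k,j^\star}+\hat{a}_{k,j^\star}$, also forces $\alpha_{s+1,k}+\hat{\alpha}_{s+1,k}=0$. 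Iterating this forced-vanishing propagation along the finite explicit-stage DAG must terminate at a stage whose entire downstream contribution to $u^{n+1}$ is zero; that terminal stage then satisfies the inclusion rule and so lies in $T_1$. With $T_1$ non-empty and the DJ conditions verified, the method is DJ-reducible, contradicting irreducibility and establishing $b+\hat{b}>0$ componentwise.
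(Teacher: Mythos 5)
Your overall strategy --- building the reducing set $T_1$ combinatorially from the Shu--Osher transformation identities rather than from the algebraic SSP-SD conditions --- is genuinely different from the paper's, and it can be made to work, but the step you yourself flag as the main obstacle is not actually closed. After deducing $\beta_{s+1,j^\star}=\hat{\beta}_{s+1,j^\star}=0$ and $\alpha_{s+1,k}+\hat{\alpha}_{s+1,k}=0$ for every $k$ with $a_{k,j^\star}+\hat{a}_{k,j^\star}>0$, you assert that ``iterating this forced-vanishing propagation'' must deposit some stage in $T_1$. Two ingredients are missing for that iteration to run. First, $\alpha_{s+1,k}=\hat{\alpha}_{s+1,k}=0$ does not by itself give $b_k=\hat{b}_k=0$: from \eqref{bj} one still has $b_k=\beta_{s+1,k}+\sum_{l>k}(\alpha_{s+1,l}+\hat{\alpha}_{s+1,l})a_{lk}$, so you need the zero-pattern property of the non-negative Shu--Osher representation ($\alpha_{ij}=0\Rightarrow\beta_{ij}=0$, which the canonical form does satisfy since there $\alpha=r\beta$) to kill $\beta_{s+1,k}$, and you must separately kill each product $(\alpha_{s+1,l}+\hat{\alpha}_{s+1,l})a_{lk}$. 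Second, killing those products requires a transitivity lemma for the Butcher dependence relation --- if $a_{lk}+\hat{a}_{lk}>0$ and $a_{k,j^\star}+\hat{a}_{k,j^\star}>0$ then $a_{l,j^\star}+\hat{a}_{l,j^\star}>0$, proved by its own induction through \eqref{aij}--\eqref{hataij} --- so that such an $l$ is itself a dependent of $j^\star$ and inherits $\alpha_{s+1,l}=\hat{\alpha}_{s+1,l}=0$. With both ingredients one shows the downstream set $D=\{j^\star\}\cup\{k:\,a_{k,j^\star}+\hat{a}_{k,j^\star}>0\}$ is closed under dependence and has $b_k=\hat{b}_k=0$ throughout, whence $D\subseteq T_1$. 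As written, neither ingredient appears, and ``terminates at a stage whose entire downstream contribution to $u^{n+1}$ is zero'' is precisely the claim to be proved rather than a consequence of what precedes it. (A small additional slip: an $i\in T_2$ with $a_{ij}+\hat{a}_{ij}>0$ for some $j\in T_1$ is never ``forced into $T_1$'' by your sweep; rather its existence would have prevented $j$ from being admitted to $T_1$ in the first place.)

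For contrast, the paper's proof avoids the propagation entirely by working with the Butcher matrices. Summing the SSP-SD conditions \eqref{4}--\eqref{5}, using commutativity, and expanding $(I+rS+\hat{r}\hat{S})^{-1}$ for small $r,\hat{r}$ yields the componentwise inequality $(rb+\hat{r}\hat{b})^T\geq(rb+\hat{r}\hat{b})^T(rA+\hat{r}\hat{A})$, in which the transitive dependence of $u^{n+1}$ on each stage is already encoded in $A$ and $\hat{A}$. Choosing $J$ to be the \emph{largest} index with $b_J+\hat{b}_J=0$ makes every weight $b_i+\hat{b}_i$ with $i>J$ strictly positive, so column $J$ of the inequality forces $A_{iJ}=\hat{A}_{iJ}=0$ for all $i>J$, and $T_1=\{J\}$ already witnesses DJ-reducibility. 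Your route buys a purely combinatorial argument that never touches the Neumann expansion, but only at the cost of supplying the zero-pattern and transitivity lemmas above; the paper's route buys a one-line conclusion at the cost of the small-$r$ algebraic estimate.
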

  \begin{proof} 
An SSP-SD method in the block form \eqref{MSMDvector}, must satisfy conditions \eqref{3}--\eqref{5} for
$0 < r \leq r_{max}$ and $0 < \hat{r} \leq \hat{r}_{max}$.
The non-negativity of  \eqref{4} and \eqref{5}  requires their sum to be non-negative as well,
\begin{eqnarray*}
(I+rS+\hat{r}\hat{S})^{-1}(rS+\hat{r}\hat{S})\geq 0. \label{PosSum}
\end{eqnarray*}
Note that these matrices commute, so we have
\begin{eqnarray*}
(rS+\hat{r}\hat{S}) (I+rS+\hat{r}\hat{S})^{-1} \geq 0. \label{PosSumB}
\end{eqnarray*}

Recalling the definition of the matrix $S$, we have
\begin{eqnarray*}
e_{s+1}(rS+\hat{r}\hat{S}) (I+rS+\hat{r}\hat{S})^{-1} \geq 0 \\
 ([ (rb+\hat{r}\hat{b}),0])(I+rS+\hat{r}\hat{S})^{-1}\geq 0 
\end{eqnarray*}
Now, we can expand the inverse as
\[  ([ (rb+\hat{r}\hat{b}),0]) (I-(rS+\hat{r}\hat{S})+ (rS+\hat{r}\hat{S})^2 - (rS+\hat{r}\hat{S})^3 + . . .  ) \geq 0  \]

Because the positivity must hold for arbitrarily small $r<<1$ and $\hat{r} <<1$ we can stop our expansion after the linear term,
and require 
\[  ([ (rb+\hat{r}\hat{b}),0]) (I-(rS+\hat{r}\hat{S})) \geq 0 , \]
which is 
\begin{eqnarray*}
  (rb+\hat{r}\hat{b})(I-(rA+\hat{r}\hat{A}))\geq0 \implies
    (rb+\hat{r}\hat{b}) \geq(rb+\hat{r}\hat{b})(rA+\hat{r}\hat{A})).
    \end{eqnarray*}
    
Now we are ready to address the proof. Assume that  $j=J$ is the largest  value for which we have $b_J = \hat{b}_J=0$, 
\[ 0 =         (r b+\hat{r}\hat{b})e_{J} \geq (rb+\hat{r}\hat{b})(rA+\hat{r}\hat{A}) e_{J} \geq 0 \]
Clearly, then we have
\[  r\sum_i (rb_i+\hat{r}\hat b_i) A_{iJ}  +\hat{r}\sum_i (rb_i+\hat{r}\hat b_i) \hat{A}_{iJ} =0      \]      
Since the method is explicit, the matrices $A$ and $\hat{A}$ are lower triangular (i.e $a_{iJ}=\hat{a}_{iJ} = 0$ for $i \leq J$), 
so this condition becomes
\begin{eqnarray}
    \label{eqn:proof}
    r\sum_{i=J+1}^s (rb_i+\hat{r}\hat b_i) A_{iJ}  +\hat{r}\sum_{i=J+1}^s (rb_i+\hat{r}\hat b_i) \hat{A}_{iJ} =0      
\end{eqnarray}   
By the assumption above, we have $(b_i+\hat{b}_i) > 0$ for $i > J $, and  $r>0, \hat{r}>0$. 
Clearly, then, for \eqref{eqn:proof}  to hold we must require 
\[ A_{iJ} = \hat{A}_{iJ}=0 \; \; \; \; \; \mbox{ for} \; \; \; \;  i > J,\]
which, together with $b_J=\hat{b}_J =0$, makes the method DJ-reducible. Thus we have a contradiction.
\end{proof}
\noindent We note that this same result, in the context of additive Runge--Kutta methods, is due to Higueras \cite{higueras2009}.

\subsection{Order barriers} \label{OrderBarriers}
Explicit SSP Runge--Kutta methods with $\sspcoef >0$ are known to have an order barrier of four, while the
implicit methods have a barrier of six \cite{SSPbook2011}. This  follows from the fact that the order $p$ of  irreducible methods with nonnegative coefficients
depends on the stage order $q$ such that 
 \[q \geq \left\lfloor \frac{p-1}{2} \right\rfloor.\]
For explicit Runge--Kutta methods the first stage is a forward Euler step, so $q=1$ and thus $p \leq 4$, whereas for 
implicit Runge--Kutta methods the first stage is at most of order two, so that $q=2$ and thus $p \leq 6$.

For two-derivative multistage SSP-TS methods, we find that similar results hold. A stage order of $q=2$ is possible for explicit  two-derivative methods (unlike explicit Runge--Kutta methods)
 because the first stage can be  second order,  i.e., a Taylor series method. However, since the first stage can be no 
 greater than second order we have a bound on the stage order $q \leq 2$, which results in an order barrier of 
 $p \leq 6$ for these methods.   In the following results we establish these order barriers. 
 
\begin{lem}  \label{lemma1}
Given an irreducible SSP-TS method of the form \eqref{MSMD},
if  $b_j=0$ then the corresponding $\hat{b}_j =0$.
\end{lem}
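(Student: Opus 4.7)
The plan is to derive a contradiction by examining the $(s+1,j)$ entry of the matrix $P = rR(S - \tfrac{2r}{K}\hat S)$ from the canonical Shu--Osher form in Theorem~\ref{thm1}. Since SSP-TS requires $P \geq 0$ componentwise for every $r \in (0,r_{\max}]$, the entry in the last row, column $j$ must be nonnegative for all arbitrarily small $r > 0$. The idea is that expanding this entry in powers of $r$ makes the $b_j$ contribution appear at order $r$ and the $\hat b_j$ contribution at order $r^2$ with a negative sign; once $b_j$ is removed, positivity forces $\hat b_j$ to vanish as well.

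First I would exploit the block structure of $S$ and $\hat S$: both have zero last columns, and last rows $[b^T,0]$ and $[\hat b^T,0]$. Writing $c = \tfrac{2r^2(1-K)}{K^2}$ and $M = I_s + rA + c\hat A$, one finds
\[
R = \begin{pmatrix} M^{-1} & 0 \\ -(rb^T + c\hat b^T)M^{-1} & 1 \end{pmatrix},
\]
and similarly $S - \tfrac{2r}{K}\hat S$ has an analogous block form with blocks $\tilde A = A - \tfrac{2r}{K}\hat A$ and $\tilde b = b - \tfrac{2r}{K}\hat b$. A direct block multiplication then yields, for $j \leq s$,
\[
(P)_{s+1,j} = r\Big[ \tilde b_j - (rb + c\hat b)^T M^{-1} \tilde A_{\cdot,j} \Big].
\]

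Next I would expand for small $r$. Since $c = O(r^2)$ and $M^{-1} = I + O(r)$, one obtains
\[
(P)_{s+1,j} = r\,b_j - \frac{2r^2}{K}\hat b_j - r^2 \sum_{k \leq s} b_k A_{k,j} + O(r^3).
\]
Substituting the hypothesis $b_j = 0$ collapses the leading term and leaves
\[
(P)_{s+1,j} = -r^2\left( \frac{2\hat b_j}{K} + \sum_{k\leq s} b_k A_{k,j}\right) + O(r^3).
\]
By Lemma~\ref{poscoef}, the coefficients $b_k$, $A_{k,j}$, and $\hat b_j$ are all nonnegative, so the bracketed quantity is nonnegative. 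Dividing by $r^2$ and letting $r \to 0^+$ in the SSP-TS inequality $(P)_{s+1,j} \geq 0$ forces this nonnegative quantity to be $\leq 0$, hence identically zero; in particular $\hat b_j = 0$, which is the desired conclusion.

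The main delicate point is ensuring the $r \to 0^+$ limit is legitimate: I need the matrix $I + rS + c\hat S$ to be invertible and the entries of $R$ to depend analytically on $r$ in some open interval $(0,\delta)$. Both hold because the matrix is a polynomial perturbation of the identity (hence invertible for all sufficiently small $r$), and Definition~\ref{def1} demands the SSP conditions hold on the whole interval $(0,r_{\max}]$, which includes arbitrarily small positive $r$. Irreducibility is not strictly invoked by this argument; it appears only to guarantee, via Lemmas~\ref{lemma2} and~\ref{lemma3}, that the statement is nonvacuous, i.e.\ that stages with $b_j = \hat b_j = 0$ are genuinely ruled out from consideration elsewhere.
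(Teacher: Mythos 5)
Your proof is correct, but it takes a more computational route than the paper's. The paper disposes of this lemma in two sentences: in the convex-combination (canonical Shu--Osher) decomposition, the operator $\tilde{F}(y^{(j)})$ can only enter through a Taylor-series building block, and every such block contributes a strictly positive multiple of $F(y^{(j)})$ as well; since all contributions to the coefficient of $F(y^{(j)})$ in the last stage are nonnegative and cannot cancel, $\hat{b}_j \neq 0$ forces $b_j \neq 0$. Your argument makes precisely this no-cancellation claim rigorous by writing out the block inverse of $I + rS + \tfrac{2r^2}{K^2}(1-K)\hat{S}$, isolating the entry $(P)_{s+1,j}$, and expanding in powers of $r$ --- which is exactly the technique the paper itself uses in the proof of Lemma~\ref{lemma3}, just applied to $P$ rather than to $rS + \hat{r}\hat{S}$. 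I verified the block computation and the expansion $(P)_{s+1,j} = r b_j - \tfrac{2r^2}{K}\hat{b}_j - r^2 b^T A_{\cdot,j} + \BigO(r^3)$; together with the componentwise nonnegativity of the Butcher coefficients (Lemma~\ref{poscoef}) and the requirement that $P \geq 0$ for arbitrarily small $r > 0$, this does force $\hat{b}_j = 0$ (and, as a byproduct, $b^T A_{\cdot,j} = 0$, which your method gives for free). Your closing observations are also accurate: invertibility for small $r$ is automatic, and irreducibility is not actually needed --- the paper's own proof does not use it either; it only becomes essential one step later, in Lemma~\ref{cor}, where this lemma is combined with Lemma~\ref{lemma3} to conclude $b > 0$. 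What the paper's version buys is brevity and a direct view of the structural reason; what yours buys is a fully explicit verification that closes the gap the paper leaves to the reader, namely why the nonnegative contributions to $b_j$ cannot conspire to vanish while $\hat{b}_j$ stays positive.
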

\begin{proof}
In any SSP-TS method  the appearance of a second derivative term $\tilde{F}$ can only happen as part of a Taylor series term.
This tells us that the $\tilde{F}$ must be accompanied by the corresponding $F$,  meaning that whenever 
we have a nonzero $\hat{a}_{ij}$ or $\hat{b_j}$ term, the corresponding ${a}_{ij}$ or ${b_j}$ term must be nonzero. 
\end{proof}
 
      \begin{lem} \label{cor}
  Any irreducible explicit SSP-TS method of the form \eqref{MSMD} must satisfy the (componentwise) condition 
   \[b > 0.\]
     \end{lem}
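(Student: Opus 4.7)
The plan is to derive this directly from the two preceding lemmas together with the subset relation from Lemma \ref{lemma2}. First I would observe that by Lemma \ref{lemma2}, any SSP-TS method is in particular an SSP-SD method, and that irreducibility is a property of the coefficients themselves and hence is inherited unchanged. So an irreducible SSP-TS method is also an irreducible SSP-SD method, to which the conclusion of Lemma \ref{lemma3}, namely $b + \hat{b} > 0$ componentwise, applies.

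Next I would argue by contradiction: suppose some component $b_j$ vanishes. By Lemma \ref{lemma1}, for SSP-TS methods the vanishing of $b_j$ forces $\hat{b}_j = 0$ as well, since $\tilde{F}$ terms can appear only as part of a Taylor series increment and must therefore be escorted by a corresponding $F$ term with a positive coefficient. Then $b_j + \hat{b}_j = 0$, which contradicts the componentwise strict positivity of $b + \hat{b}$ guaranteed by Lemma \ref{lemma3}. Hence no component of $b$ can be zero, and since $b \geq 0$ by Lemma \ref{poscoef}, we conclude $b > 0$ componentwise.

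I do not expect any real obstacle here: the lemma is essentially a packaging corollary of Lemmas \ref{lemma2}, \ref{lemma3}, and \ref{lemma1}. The only mild care is to confirm that the hypothesis "irreducible" transfers cleanly between SSP-TS and SSP-SD (which it does, since DJ-reducibility is defined purely in terms of the zero pattern of $a_{ij}, \hat{a}_{ij}, b_j, \hat{b}_j$), and to state the chain of implications in the right order so that the strict inequality comes from the SSP-SD result while the zero-propagation between $b$ and $\hat{b}$ comes from the Taylor-series structure.
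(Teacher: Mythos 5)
Your argument is correct and is essentially identical to the paper's own proof: both pass from SSP-TS to SSP-SD via Lemma \ref{lemma2}, invoke Lemma \ref{lemma3} for the componentwise bound $b+\hat{b}>0$, and then use Lemma \ref{lemma1} to rule out $b_j=0$. The only difference is cosmetic phrasing (explicit contradiction versus the paper's direct contrapositive), so there is nothing to add.
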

     \begin{proof}
     Any irreducible method \eqref{MSMD} that can be written as a convex combination of \eqref{FE} and \eqref{TS} can also be written as a 
     convex combination of \eqref{FE} and \eqref{SD}, according to Lemma \ref{lemma2}. Applying Lemma \ref{lemma3} we obtain the
     condition $b+\hat{b} > 0 $, componentwise. Now, Lemma \ref{lemma1} tells us that if any component ${b}_j =0$ then its corresponding $\hat{b}_j =0$,
     so that $b_j +  \hat{b}_j  > 0$ for each $j$ implies that $b_j >0$ for each $j$.
     \end{proof}

  \begin{thm} \label{thm2}
Any irreducible explicit  SSP-TS method of the form \eqref{MSMD}
with order $p\geq 5$  must satisfy the stage order $q=2$ condition
\begin{eqnarray} \label{SO2}
 \tau_2 =  A c + \hat{c}  - \frac{1}{2}  c^2  = \mathbf{0} 
 \end{eqnarray}
 where the term $c^2$ is a component-wise squaring.
  \end{thm}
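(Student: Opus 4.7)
My plan is to mimic the classical Butcher--Chipman argument relating stage order to method order for positive-coefficient Runge--Kutta schemes, adapted to the two-derivative setting. Recall that the vector $\tau_2 = Ac + \hat c - \tfrac12 c^2$ measures the failure of the internal stages to match the Taylor expansion of the exact solution at time $t_n + c_i\,\dt$ through order $\dt^2$. Writing $y^{(i)} = u(t_n + c_i\dt) + \dt^3\,\psi_i + O(\dt^4)$, one can identify $\psi_i$ with a specific elementary differential of $F$ whose scalar coefficient is a fixed nonzero multiple of $(\tau_2)_i$. Substituting these expansions into the final update line of \eqref{MSMD} and expanding $F$ and $\tilde F$ about their exact values, the local truncation error picks up contributions proportional to $b^T\tau_2$, $b^T C\tau_2$ (with $C=\mathrm{diag}(c)$), $\hat b^T\tau_2$, $b^T A\tau_2$, and $b^T\hat A\tau_2$, each multiplied by an elementary differential of $F$ that is linearly independent for a generic nonlinear problem.

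First I would enumerate the MDRK order conditions through order $5$ (as catalogued in Appendix \ref{MDRKOrderCondition}) and, by subtracting appropriate lower-order consistency relations such as $b^T e = 1$ and $b^T c + \hat b^T e = \tfrac12$ from the order $3$, $4$, and $5$ identities, isolate the scalar equations
\[
b^T\tau_2 = 0,\quad b^T C\tau_2 = 0,\quad \hat b^T\tau_2 = 0,\quad b^T A\tau_2 = 0,\quad b^T\hat A\tau_2 = 0.
\]
The key point is that once the order-$2$ conditions are imposed, the residuals at orders $3$--$5$ each factor through $\tau_2$, because a non-zero $\tau_2$ is precisely the lowest-order obstruction to the stage values agreeing with Taylor expansions. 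Next, I invoke Lemma \ref{cor} to assert $b>0$ componentwise, and Lemma \ref{poscoef} to assert componentwise nonnegativity of $A$ and $\hat A$, which yields $c = Ae \geq 0$ and $\hat c = \hat Ae \geq 0$.

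The core argument is then a componentwise positivity bootstrap driven by the strictly lower triangular structure of $A$ and $\hat A$. Working through the stage index, at each $i$ the quantity $(A\tau_2)_i$ and $(\hat A\tau_2)_i$ depend only on the earlier entries $(\tau_2)_j$ for $j<i$ via nonnegative weights, so the five scalar identities above, combined with $b_i>0$ and $c_i \geq 0$, propagate to force $(\tau_2)_i = 0$ inductively, exactly as in the classical proof that explicit SSP Runge--Kutta methods of order $p \geq 3$ must satisfy $Ac = \tfrac12 c^2$ at every active stage. The main obstacle is the bookkeeping in extracting those five scalar identities: in the MDRK setting each classical rooted tree splits into several decorated variants distinguishing $F$-nodes from $\tilde F$-nodes, so one classical RK order condition becomes several MDRK conditions, and verifying that the right combinations collapse precisely to weighted sums of $\tau_2$ entries (rather than to some other residual) is the delicate combinatorial step. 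Once those identities are in hand, the positivity structure guaranteed by Lemmas \ref{cor} and \ref{poscoef} finishes the proof.
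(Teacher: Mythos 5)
Your opening diagnosis is in the right spirit, but the closing argument has a genuine gap, and the gap sits exactly where the real work lies. The five identities you propose are all \emph{linear} in $\tau_2$, and linear relations of the form $b^T M \tau_2 = 0$ cannot force a vector with possibly mixed-sign entries to vanish, however positive $b$ is: you have at most five constraints on $s$ unknowns, and nothing isolates an individual component $(\tau_2)_i$. The ``positivity bootstrap'' you invoke does not exist --- knowing that $(A\tau_2)_i$ involves only earlier entries of $\tau_2$ through nonnegative weights gives no induction, because each identity constrains a weighted sum over \emph{all} stages, never a single stage, and there is no base case beyond the trivial $(\tau_2)_1=0$. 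The classical argument you cite is in fact not a linear bootstrap: the Runge--Kutta order barrier is proved by exhibiting a \emph{quadratic} form, $\sum_i b_i\bigl((Ac)_i-\tfrac12 c_i^2\bigr)^2=0$, as an exact linear combination of three fifth-order conditions, and then using $b>0$ together with the componentwise nonnegativity of the square. A secondary problem is that some of your five identities do not individually follow from the order conditions; for instance, eliminating the lower-order relations from the $p=4$ conditions yields only $b^T C\tau_2+\hat b^T\tau_2=0$, not $b^TC\tau_2=0$ and $\hat b^T\tau_2=0$ separately.

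The paper's proof supplies precisely the missing quadratic identity in the two-derivative setting. Taking the three fifth-order conditions
$b^Tc^4+4\hat b^Tc^3=\tfrac15$,
$\;b^T(c^2\odot Ac)+b^T(c^2\odot\hat c)+\hat b^Tc^3+2\hat b^T(c\odot Ac)+2\hat b^T(c\odot\hat c)=\tfrac1{10}$, and
$\;b^T(Ac\odot Ac)+2b^T(\hat c\odot Ac)+b^T\hat c^2+2\hat b^T(c\odot Ac)+2\hat b^T(c\odot\hat c)=\tfrac1{20}$,
the combination $\tfrac14$ times the first, minus the second, plus the third collapses to $b^T\bigl(Ac+\hat c-\tfrac12 c^2\bigr)^2=b^T\tau_2^2=0$, with the square taken componentwise; all the $\hat b$ cross terms cancel and the right-hand sides sum to $\tfrac1{20}-\tfrac1{10}+\tfrac1{20}=0$. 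Lemma~\ref{cor} gives $b>0$ componentwise, each $(\tau_2)_i^2\ge 0$, and hence $\tau_2=\mathbf 0$. Note that only the strict positivity of $b$ is needed; the nonnegativity of $A$ and $\hat A$ from Lemma~\ref{poscoef} plays no role in this theorem. If you want to repair your outline, the step to fix is the extraction of the order-condition residuals: you must find the combination that factors through $\tau_2^2$ rather than through $\tau_2$ linearly.
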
    
  \begin{proof}
A method of order $p \geq 5$ must satisfy the 17 order conditions presented in the Appendix \ref{MDRKOrderCondition}. Three of those necessary 
conditions are\footnote{In this work we use  $\odot$ to denote component-wise multiplication.} 
   \begin{subequations}
   \begin{align}
	&  b^Tc^4 + 4\hat{b}^Tc^3 =\frac{1}{5}  \label{p5.1} \\
 	&  b^T\left( c^2 \odot Ac\right)  + b^T\left( c^2 \odot \hat{c}\right)+\hat{b}^Tc^3+2\hat{b}^T\left( c \odot Ac\right)+2\hat{b}^T\left( c \odot \hat{c}\right)=\frac{1}{10} \label{p5.2}\\
	&  b^T\left( Ac \odot Ac \right) +2b^T\left( \hat{c} \odot Ac\right)+b^T\hat{c}^2+ 2\hat{b}^T\left(c \odot Ac\right)+2\hat{b}^T\left(c \odot \hat{c}\right)=\frac{1}{20} . \label{p5.3}
   \end{align}  
    \end{subequations}
From this, we find that the following linear combination of these equations gives
\[  \frac{1}{4} \eqref{p5.1} - \eqref{p5.2} +   \eqref{p5.3} = b^T \left( A c +\hat{c}  - \frac{1}{2} c^2 \right)^2 =  b^T \tau_2^2 = 0  \]
(once again, the squaring here is  component-wise). 
Given the strict component-wise positivity of the vector $b$  according to Lemma \ref{cor} and the 
non-negativity of $\tau_2^2$,   this condition becomes $\tau_2 =\mathbf{0}$.
  \end{proof}

    \begin{thm}
Any irreducible explicit  SSP-TS method of the form \eqref{MSMD}  cannot have order $p=7$. 
    \end{thm}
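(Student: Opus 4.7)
The plan is to argue by contradiction in the same spirit as the proof of Theorem \ref{thm2}, but at one order higher. Suppose we have an irreducible explicit SSP-TS method of the form \eqref{MSMD} with order $p=7$. Since $7\geq 5$, Theorem \ref{thm2} immediately gives the stage-order-2 identity $\tau_2 = Ac + \hat{c} - \frac{1}{2} c^2 = \mathbf{0}$, together with the strict positivity $b>0$ from Lemma \ref{cor}. These are the two structural tools I would carry into the argument.

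First, I would enumerate the order-7 conditions for two-derivative multistage methods (generated by the tree formalism referenced in Appendix \ref{MDRKOrderCondition}) and single out those whose elementary weights involve the combination $Ac + \hat{c}$ or $\frac{1}{2} c^2$. The goal is to find a rational linear combination of order-7 conditions in which the right-hand sides sum to zero and, after substituting $Ac + \hat{c} = \frac{1}{2} c^2$ wherever possible, the left-hand side collapses to a quadratic form
\[
b^T \bigl( \phi \odot \phi \bigr) \;=\; 0,
\]
for some vector $\phi$ linear in $A,\hat{A},c,\hat{c}$. A natural candidate for $\phi$ is a weighted version of a ``stage order 3'' residual such as
\[
\phi \;=\; A c^2 + 2 A\hat{c} + 2\, c \odot \hat{c} - \tfrac{1}{3} c^3,
\]
which is precisely the obstruction to each stage matching the Taylor expansion of the true solution to order $\dt^3$.

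Second, since $b>0$ strictly and $\phi \odot \phi \geq \mathbf{0}$ componentwise, the identity $b^T(\phi \odot \phi) = 0$ forces $\phi = \mathbf{0}$. Combined with $\tau_2 = \mathbf{0}$, this effectively imposes stage-order-3 behavior on every internal stage. However, an explicit two-derivative multistage method begins with $y^{(1)} = u^n$, so $c_1 = \hat{c}_1 = 0$ and the first stage can represent the true solution only to order $\dt^0$. The second stage has $c_2 = a_{21}$ and, by $\tau_2 = 0$ at $i=2$, also $\hat{a}_{21} = \tfrac{1}{2} a_{21}^2$, which is a genuine Taylor step of order exactly $2$, not order $3$. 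Evaluating $\phi$ at the second stage then yields a non-trivial constraint on $a_{21}$ (the only free parameter there) that cannot be reconciled with the first-row requirements, delivering the desired contradiction.

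The main obstacle is identifying the correct linear combination of order-7 conditions that produces the sum-of-squares identity above, since the number of rooted bi-colored trees at order 7 is considerably larger than at order 5. A possible shortcut is to invoke Lemma \ref{lemma2}: every SSP-TS method is also SSP-SD, so one can instead search within the (smaller) set of SSP-SD order-7 conditions, and may even be able to reuse known barrier identities from the classical theory of additive and implicit Runge--Kutta methods in the style of Kraaijevanger. Either way, the crux of the proof is purely algebraic bookkeeping: once the right quadratic identity is exhibited, the non-negativity Lemma \ref{cor} closes the argument immediately.
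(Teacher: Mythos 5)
Your strategy is the same one the paper uses: assemble a rational combination of seventh-order conditions whose right-hand sides cancel and whose left-hand side collapses to $b^T(\phi\odot\phi)=0$ for a stage-order-three residual $\phi$, invoke the strict positivity $b>0$ from Lemma \ref{cor} to force $\phi=\mathbf{0}$, and then contradict this with the fact that the first nontrivial stage of an explicit method \eqref{MSMD} is a Taylor step of order at most two. The paper carries this out with exactly three conditions, \eqref{p7.1}--\eqref{p7.3}, via the combination $\frac{1}{9}\eqref{p7.1}-\frac{2}{3}\eqref{p7.2}+\eqref{p7.3}= b^T\tau_3^2=0$ with $\tau_3=Ac^2+\hat{A}c-\frac{1}{3}c^3$ (and, notably, it does not need Theorem \ref{thm2} or the substitution $Ac+\hat{c}=\frac{1}{2}c^2$ anywhere in the argument).

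That said, there are concrete gaps. The entire mathematical content of the theorem is the exhibition of the cancelling combination, and you explicitly defer it as ``algebraic bookkeeping''; its existence is not automatic and is precisely what must be demonstrated. Worse, your candidate residual $\phi=Ac^2+2A\hat{c}+2\,c\odot\hat{c}-\frac{1}{3}c^3$ is not the right object: expanding $\dot{F}(y^{(j)})\approx u''+c_j\dt u'''+\cdots$ shows that the coefficient of $\dt^3 u'''$ in stage $i$ is $\frac{1}{2}(Ac^2)_i+(\hat{A}c)_i$, so the obstruction involves $\hat{A}c$ rather than $A\hat{c}$, and no $c\odot\hat{c}$ term appears; with the wrong $\phi$ no combination of the order conditions will produce the required square. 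Finally, the proposed shortcut through Lemma \ref{lemma2} does not help: the order conditions are identical for every method of the form \eqref{MSMD} regardless of which SSP notion it satisfies, and the paper points out that the order barrier fails for SSP-SD methods precisely because they need not satisfy $b>0$ — so retreating to the SSP-SD setting weakens, rather than supplies, the positivity you need to close the argument. Your concluding step at the second stage is fine in spirit and matches the paper's observation that the stage order of an explicit two-derivative method is capped at $q=2$.
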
  
    \begin{proof}
    This proof is similar to the proof of Theorem \ref{thm2}.
    The complete list of additional order conditions for seventh order is lengthy and beyond the scope of this work.
    However, only three of these conditions are needed for this proof. These are:
   \begin{subequations}
   \begin{align}
&b^T c^6 +6\hat{b}c^5 =\frac{1}{7}    \label{p7.1} \\
&b^T\left(Ac^2 \odot c^3\right) + 2b^T\left(\hat{A}c \odot c^3 \right) +\hat{b}^Tc^5 +3\hat{b}^T\left( Ac^2 \odot c^2\right) + 6\hat{b}^T\left(\hat{A}c \odot c^2 \right)= \frac{1}{21}       \label{p7.2} \\
&b^T\left( Ac^2 \odot Ac^2 \right) + 4b^t \left(\hat{A}c \odot Ac^2  \right) + 4b^T \left(\hat{A}c \odot \hat{A}c \right) + 4\hat{b}^T\left(\hat{A}c \odot c^2 \right) +2 \hat{b}^T \left( Ac^2 \odot c^2 \right)=\frac{1}{63}  .   \label{p7.3} 
   \end{align}
    \end{subequations}
    Combining these three equations we have:
      \[   \frac{1}{9}\eqref{p7.1}  -\frac{2}{3}\eqref{p7.2} + \eqref{p7.3} = b^T\left(Ac^2+\hat{A}c-\frac{c^3}{3}  \right)^2=0.\] 
From this we see that any seventh order method of the form \eqref{MSMD} which admits a decomposition of a convex combination
    of \eqref{FE} and \eqref{TS}, must satisfy the  stage order $q=3$ condition
    \[   \tau_3=\left(Ac^2+\hat{A}c-\frac{c^3}{3}\right)=\mathbf{0}.\] 
      
    However, as noted above,  the first stage of the explicit two-derivative multistage method \eqref{MSMD} has the form
    \[ u^n + a_{21} \dt F(u^n) + \hat{a}_{21} \dt^2 \tilde{F}(u^n) \] which can be at most of 
    second order. This means that the stage order of explicit two-derivative multistage methods can be at most $q=2$, and so the $\tau_3=0$ condition
    cannot be satisfied.
    Thus, the result of the theorem follows.
      \end{proof}
      
  Note that the order barriers do not hold for   SSP-SD methods, because SSP-SD methods do not 
     requirement that all components of the vector $b$ must be strictly positive.

\section{Optimized SSP Taylor Series Methods} \label{optimalmethods}

In Section \ref{SSPopt} we formulated the  search
 for optimal SSP two-derivative methods as: 
 \begin{center}
\noindent
{\bf  Find the coefficient matrices $S$ and $\hat{S}$ 
that maximize the value of
$\sspcoef_{TS} = \max r$ \\ such that the relevant order conditions 
and the SSP conditions 
\eqref{SSPcondition1}-\eqref{SSPcondition2}
are all satisfied. }\\
\end{center}
To accomplish this, we develop and  use  a   {\sc matlab} optimization code  \cite{SSPTSgithub}  (similar to Ketcheson's code \cite{ketchcodes})
for finding optimal two-derivative multistage methods that preserve the SSP properties \eqref{FE} and \eqref{TS}.
The  SSP coefficients of the optimized SSP explicit multistage two-derivative methods of order up to $p=6$ (for different values of $K$)
are presented in this section.

 We considered three types of methods:
 \begin{enumerate}
 \item[{\bf (M1)}] Methods that have the general form \eqref{MSMD} with no simplifications.
 \item[{\bf (M2)}]  Methods that are constrained to satisfy the  stage order two ($q=2$) requirement \eqref{SO2}:
  \[ \tau_2 = A c +\hat{c} - \frac{1}{2} c^2  = 0.\]
  \item[{\bf (M3)}] Methods that satisfy the stage order two ($q=2$) \eqref{SO2} requirement and require only $\dot{F}(u^n)$, 
so they have only one second derivative evaluation. This is equivalent to requiring that all values in $\hat{A}$ and $\hat{b}$,
 except those on the first column of the matrix and the first element of the vector, be zero.
\end{enumerate}
We refer to the methods by type, number of stages, order of accuracy, and value of $K$. For example,
an SSP-TS method of type (M1) with $s=5$  and $p=4$, optimized for the value of $K=1.5$ would be referred to as 
SSP-TS M1(5,4,1.5) or as SSP-TS M1(5,4,1.5). For comparison, we refer to methods from \cite{MSMD} that are SSP in the sense that they preserve the properties of  
the spatial discretization coupled with \eqref{FE} and \eqref{SD} as SSP-SD MDRK(s,p,K) methods.

\begin{figure}[h]
   \centering
          \includegraphics[width=.8\textwidth]{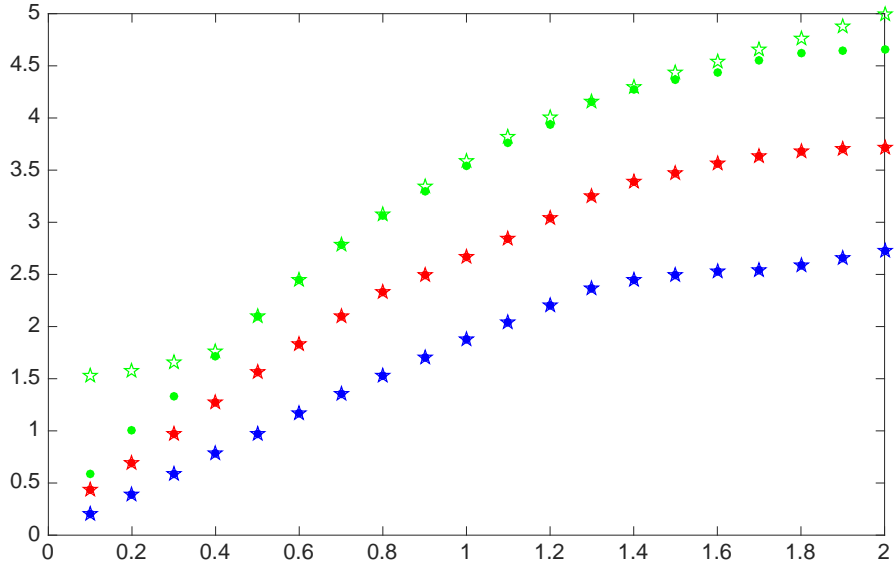}
\caption{The SSP-TS coefficient $\sspcoef_{TS}$ (on y-axis)
of fourth order SSP-TS M1 and M2 methods with  $s=\textcolor{blue}{3},\textcolor{red}{4},\textcolor{green}{5}$ stages
plotted against the value of $K$ (on x-axis). The open stars indicate methods of type (M1) 
while the filled circles are methods of type (M2). Filled stars are (M1) markers overlaid with (M2) markers indicating close if not equal SSP coefficients.
}
\label{Fig4thO} 
\end{figure}

 \subsection{Fourth Order Methods}
Using the optimization approach described above, we find fourth order methods  with $s=3,4,5$  
stages for a range of $K=0.1, . . . , 2.0$.
In Figure \ref{Fig4thO} we show the SSP coefficients of methods of SSP-TS methods of type (M1) and (M2) 
with $s=3,4,5$  (in blue, red, green) plotted against the value of $K$. The open stars indicate methods of type (M1) 
while the filled circles are methods of type (M2). Filled stars are (M1) markers overlaid with (M2) markers indicating close if not equal SSP coefficients.
  
  \smallskip
  
\noindent {\bf Three Stage Methods}:
Three stage SSP-TS methods with fourth order accuracy exist, and all these have stage order two ($q=2$), so they are
all of type (M2).  Figure \ref{Fig4thO} shows the SSP coefficients of these methods in blue. The (M3) methods have an SSP coefficient 
\[ \sspcoef_{TS} =  \left\{ \begin{array}{ll} 
\frac{2 K }{K+1} &  \mbox{for} \; \; \;  K \leq 1 \\
1 &  \mbox{for} \; \; \;  K \geq 1. \\
\end{array}  \right. \] 
For the case where $K\geq1$ we obtain the following optimal (M3) scheme with a SSP coefficient $\sspcoef_{TS} =1$:
\[
\begin{array}{l l}
\vspace{.25 cm}
y^{(1)} &= u^n\\ \vspace{.25 cm}
y^{(2)} &= u^n +\Delta t F(y^{(1)}) + \frac{1}{2} \Delta t^2 \dot{F}(y^{(1)})\\\vspace{.25 cm}
y^{(3)} &=u^n + \frac{1}{27}  \Delta t \left(14F(y^{(1)})  + 4F(y^{(2)})\right) +\frac{2}{27}\Delta t^2 \dot{F}(y^{(1)})\\\vspace{.25 cm}
u^{n+1}&= u^n + \frac{1}{48}  \Delta t \left(17 F(y^{(1)})  + 4F(y^{(2)}) +27F(y^{(3)})\right)+\frac{1}{24} \Delta t^2 \dot{F}(y^{(1)}).\\
\end{array}
\] 
When $K\leq1$ we have to modify the coefficients accordingly to obtain the maximal value of $\sspcoef_{TS}$ as defined above.  
Here we provide the non-zero coefficients for this family of  M3(3,4,K) as a function of K: 
\[
\begin{array}{l l l}
\vspace{.25 cm}
a_{21} = \frac{K+1}{2} & b_1 =  \frac{3K^5 - 9K^4 - 22K^3 + 30K^2 + 21K + 11}{ 3(K - 3)^2(K + 1)^3 } & \hat{a}_{21}=\frac{(K + 1)^2}{8}\\\vspace{.25 cm}
a_{31} = \frac{(K + 1)(- K^3 - 2K^2 + 14K + 3)}{2(K + 2)^3} & b_2 = \frac{2K}{3(K + 1)^3} & \hat{a}_{31}= \frac{K(- K^2 + 2K + 3)^2}{8(K + 2)^3} \\\vspace{.25 cm}
a_{32} = \frac{(K + 1)(K - 3)^2}{2(K+2)^3} &  b_3 =  \frac{2(K + 2)^3}{3(K - 3)^2(K + 1)^3}&  \hat{b}_1 =  -\frac{- 3K^3 + 3K^2 + K + 1}{6(K - 3)(K + 1)^2 }. \\
\end{array}
\]


 In Table \ref{M2M33s4p} we compare the SSP coefficient  of three stage fourth order SSP-TS 
 methods of type (M2) and (M3) for a selection of values of $K$. 
 Clearly, the  (M3) methods have a much smaller SSP coefficient than the (M2) methods. However, a better measure of efficiency is 
 the {\em effective SSP coefficient} computed by  normalizing for the number of function 
 evaluations required, which is $2s$ for the (M2) methods, and $s+1$ for the (M3) methods. If 
 we consider the  effective SSP coefficient, we find that while the (M2) methods are more
 efficient for the larger values of $K$, for smaller values of $K$ the (M3) methods are more efficient.

 \begin{table}
 \begin{center}
 \begin{tabular}{|l|lllllll|} \hline
 &K = &  0.1 & 0.2 & 0.5 & 1.0 & 1.5 & 2.0 \\ \hline
 (M2)  &  $\sspcoef_{TS}$  = & 0.1995  & 0.3953 & 0.9757 & 1.8789 & 2.4954 & 2.7321 \\ 
 &   $\ceff$= & 0.0333  &  0.0659   &  0.1626  &  0.3131  &  0.4159 &   0.4553 \\ \hline
  (M3) &  $\sspcoef_{TS}$  = & 0.1818  & 0.3333 &  0.6667  &  1.0000 & 1.0000 & 1.0000 \\ 
 &    $\ceff$ = & 0.0454   & 0.0833   &  0.1667  &  0.2500   & 0.2500 &   0.2500 \\ \hline
 \end{tabular}
 \end{center}
 \caption{SSP-TS  coefficients of three stage fourth order SSP-TS methods. }  \label{M2M33s4p}
\end{table}
  \begin{table}
  \begin{center} {\small
 \begin{tabular}{|lllllllllll|} \hline
 &  K = & 0.1 & 0.2 & 0.3 & .5 &  1.0 & 1.5 &1.6 & 1.8 & 2.0 \\ \hline
 (M1) &  $\sspcoef_{TS}$  = &  0.4400 & 0.6921 & 0.9662 & 1.5617 & 2.6669 & 3.4735 & 3.5607 & 3.6759 & 3.7161 \\
 &  $\ceff$ = & 0.0550  &  0.0865  &  0.1208  &  0.1952  &  0.3334  &  0.4342  &  0.4451 &   0.4595   & 0.4645 \\ \hline
 (M2) &   $\sspcoef_{TS}$  = & 0.3523 & 0.6569 & 0.9662 & 1.5617 & 2.6669 & 3.4735 & 3.5301 & 3.5850 & 3.6282 \\
 &   $\ceff$ = & 0.0440  &  0.0821  &  0.1208  &  0.1952  &  0.3334 &   0.4342 &    0.4413 &  0.4481  &  0.4535 \\\hline
 (M3) &  $\sspcoef_{TS}$  = &  0.3381 & 0.6102 & 0.8407 & 1.2174 & 1.8181 & 2.0596 & 2.0793 & 2.1030 & 2.1093  \\ 
&   $\ceff$ = & 0.0676   & 0.1220   & 0.1681  &  0.2435   & 0.3636  &  0.4119  & 0.4159  &  0.4206   &  0.4219 \\ \hline
 \end{tabular} }
\end{center}
 \caption{SSP-TS coefficients of four stage fourth order SSP-TS methods.  \label{M2M34s4p} }  
 \end{table}
 \begin{table}
 \begin{center} {\small
 \begin{tabular}{|lllllllllll|} \hline
 &  K& = 0.1 & 0.2 & 0.3 & .5 &  .6 & .7 & 1.0 & 1.5 & 2.0 \\ \hline
 (M1) &  $\sspcoef_{TS}$  = & 1.5256 & 1.5768 & 1.6563 & 2.0934 & 2.4472 & 2.7819 & 3.5851 & 4.4371 & 4.9919 \\
  & $\ceff$ = & 0.1526  &  0.1577   & 0.1656 &   0.2093 &   0.2447  &  0.2782  &  0.3585   & 0.4437  &  0.4992 \\ \hline
 (M2) &   $\sspcoef_{TS}$  = & 0.5876 & 1.0003 & 1.3319 & 2.0934 &  2.4472 & 2.7819 & 3.5381 & 4.3629    & 4.6614 \\
  &$\ceff$ = &  0.0588  &  0.1000    & 0.1332 &   0.2093 &   0.2447 &   0.2782  &  0.3538   & 0.4363  &  0.4661 \\ \hline
 (M3) & $\sspcoef_{TS}$  = &  0.5631  & 0.9296 & 1.2057 & 1.6551 & 1.8554 & 2.0300 & 2.4407 & 2.8748 & 2.9768 \\ 
 & $\ceff$ = & 0.0939  &  0.1549   & 0.2009  &  0.2758 &   0.3092  &  0.3383   & 0.4068    & 0.4791  &  0.4961 \\ \hline
 \end{tabular}}
 \end{center}
 \caption{SSP-TS coefficients of five stage fourth order methods. }  \label{M2M35s4p}
\end{table}

\smallskip
 
\noindent {\bf Four Stage SSP-TS Methods}:
 While four stage fourth order explicit SSP Runge-Kutta methods do not exist, 
four stage fourth order SSP-TS  explicit two-derivative Runge-Kutta methods do. 
Four stage fourth order methods do not necessarily satisfy the stage order two ($q=2$) condition. These methods 
 have a more nuanced behavior:  for very small $K<0.2$, the optimized SSP methods have stage order $q=1$. 
 For $0.2 < K< 1.6$ the optimized SSP methods have  stage order $q=2$.  Once $K$ becomes larger again,
 for $K \geq 1.6$, the optimized SSP methods are once again of stage order $q=1$. However, the difference in the 
 SSP coefficients is very small (so small it does not show on the graph) so the (M2) methods can be used without
 significant loss of efficiency. 
 
As seen in  Table \ref{M2M34s4p}, the methods with the special structure (M3) have smaller SSP coefficients. But
when we look at the  effective SSP-TS coefficient  we notice that, once again, for smaller $K$ they are more efficient. 
Table \ref{M2M34s4p} shows that the (M3) methods  are more efficient  when 
$K \leq 1.5 $, and remain competitive for larger values of $K$.

 It is interesting to consider the limiting case, \noindent{\bf  SSP-TS  M2($4,4,\infty$)}, in which the Taylor series formula is unconditionally stable (i.e.,  $K=\infty$). 
This provides us with an upper bound of the SSP coefficient for this class of methods by ignoring any time step constraint coming from condition \eqref{TS}. 
 A four stage fourth order method that is optimal for $K=\infty$ is:

 \[
\begin{array}{l l}
\vspace{.25 cm}
y^{(1)} &= u^n\\ \vspace{.25 cm}
y^{(2)} &= u^n + \frac{1}{4}  \Delta t F(y^{(1)}) + \frac{1}{32} \Delta t^2  \dot{F}(y^{(1)}) \\ \vspace{.25 cm}
y^{(3)} &=u^n +   \frac{1}{4} \Delta t \left( F(y^{(1)}) + F(y^{(2)}) \right) +
 \frac{1}{32} \Delta t^2  \left( \dot{F}(y^{(1)}) +   \dot{F}(y^{(2)}) \right)  \\ \vspace{.25 cm}
y^{(4)}&= u^n + \frac{1}{4} \Delta t \left( F(y^{(1)})  +F(y^{(2)})  +F(y^{(3)})  \right)
+ \frac{1}{32} \Delta t^2 \left( \dot{F}(y^{(2)}) +2  \dot{F}(y^{(3)}) \right) \\  \vspace{.25 cm}
u^{n+1} & = u^n + \frac{1}{4} \Delta t \left( F(y^{(1)})  +F(y^{(2)})  +F(y^{(3)}) +F(y^{(4)})   \right) \\
& + \frac{1}{288} \Delta t^2 \left( 
5 \dot{F}(y^{(1)}) +
12 \dot{F}(y^{(2)}) +
3 \dot{F}(y^{(3)})+
16 \dot{F}(y^{(4)})
   \right). \\ 
\end{array}
\] 
This method has SSP coefficient $\sspcoef_{TS}=4$, with effective SSP coefficient $\ceff = \frac{1}{2}$.
This method also has stage order $q=2$.
This method is not intended to be useful in the SSP context but gives us an idea of the limiting behavior: i.e., what the best possible value of 
$\sspcoef_{TS}$ could be
if the Taylor series condition had no constraint ($K=\infty)$. We observe in Table \ref{M2M34s4p} that the SSP coefficient of the M2(4,4,K) method  is within 
10\% of this limiting $\sspcoef_{TS}$ for values of $K=2$.

\smallskip
   
\noindent {\bf Five Stage Methods}:
The optimized five stage fourth order methods have stage order $q=2$ for the values of $0.5 \leq K \leq 7$, and 
otherwise have stage order $q=1$. The SSP coefficients of these methods are shown in the green line in Figure \ref{Fig4thO},
and the SSP and effective SSP coefficients for all three types of methods are compared in Table \ref{M2M35s4p}.
We observe that these methods have higher effective SSP coefficients than the corresponding four stage methods.

 \subsection{ {Fifth Order SSP-TS Methods}}
 While fifth order explicit SSP Runge-Kutta methods do not exist, the addition of a second derivative which satisfies the Taylor Series 
 condition allows us to find explicit SSP-TS methods of fifth order. 
 For fifth order, we have the result (in Section  \ref{OrderBarriers} above) that  all methods must satisfy the stage order $q=2$ condition,
so we consider only (M2) and (M3) methods. In Figure \ref{Fig5thO} we show the SSP-TS  coefficients of M2(s,5,K) methods for $s=4,5,6$.
 
 \smallskip
 
\noindent {\bf Four Stage Methods}:
Four stage fifth order methods exist, and their SSP-TS coefficients are shown in blue in Figure \ref{Fig5thO}. 
 We were unable to find M3(4,5,K) methods, possibly due to the paucity of  available coefficients for this form. 
 
 \smallskip
 
\noindent {\bf Five Stage Methods}:
The SSP coefficient of the five stage M2 methods can be seen in red in Figure \ref{Fig5thO}.  We observe that 
 the SSP coefficient of the M2(5,5,K) methods plateaus with respect to $K$.
As shown in Table \ref{fifthO}, methods with the form (M3) have a significantly smaller SSP coefficient than that of (M2). 
However, the effective SSP coefficient is more informative here, and we see that the (M3) methods are
 more efficient for small values of $K \leq 0.5$, but not for larger values.

  \smallskip
 
\noindent {\bf Six Stage Methods}:
The SSP coefficient of the six stage M2 methods can be seen in green in Figure \ref{Fig5thO}. 
In Table  \ref{fifthO} we compare the SSP coefficients and effective SSP coefficients of (M2) and (M3) methods.
As in the case above,  the methods with the form (M3) have a significantly smaller SSP coefficient than that of (M2), and 
  the SSP coefficient of the (M3) methods plateaus with respect to $K$. However, the effective SSP coefficient shows that
the (M3) methods are more efficient for small values of $K \leq 0.7$, but not for larger values.

 \begin{table}[h]
  \begin{center} {\small
 \begin{tabular}{|lllllllllll|} \hline
    &  K =& 0.1 & 0.2 & 0.3 & 0.5  & 1.0 & 1.5 & 1.6 & 1.8 & 2.0 \\ \hline
M2(5,5,K) &$\sspcoef_{TS}$  = &   0.3802 & 0.7448 & 1.0892 & 1.6877 & 2.9281 & 3.8102 & 3.8479 & 3.8879 & 3.8971 \\
&    $\ceff$ = & 0.0380  &  0.0745  &   0.1089  &  0.1688  &  0.2928  &  0.3810  & 0.3848  &  0.3888   & 0.3897 \\ \hline
M3(5,5,K) &  $\sspcoef_{TS}$  = & 0.3298 & 0.5977 & 0.8186 & 1.0625 &  1.0625 & 1.0625 & 1.0625 & 1.0625 & 1.0625 \\ 
&    $\ceff$ = &  0.0550  &  0.0996  &   0.1364   & 0.1771  &  0.1771  &  0.1771  & 0.1771  &  0.1771   & 0.1771 \\ \hline
M2(6,5,K) & $\sspcoef_{TS}$  = &0.5677  & 1.0230 & 1.4581 & 2.2102 & 3.8749 & 4.9201 & 5.0002 & 5.0903 & 5.1301 \\   
 & $\ceff$  = & 0.0473  &  0.0852  &  0.1215  &  0.1842  &  0.3229  &  0.4100  &  0.4167 &   0.4242  &  0.4275 \\\hline 
M3(6,5,K) &$\sspcoef_{TS}$  = & 0.5398  & 0.9370 & 1.2592 & 1.6914 & 1.8208 & 1.8208 & 1.8208 & 1.8208 & 1.8208 \\ 
 &   $\ceff$ =&  0.0771  &  0.1339  &  0.1799  &  0.2416  &  0.2601  &  0.2601  &  0.2601  &  0.2601   & 0.2601 \\ \hline
   \end{tabular}}
 \end{center}
  \caption{SSP-TS coefficients and effective SSP-TS coefficients of fifth order methods. }  \label{fifthO}
\end{table} 

\begin{table}[h]
     \begin{center}
 \begin{tabular}{|lllllllll|} \hline
   & K =     &   0.1 & 0.2 & 0.3 & 0.5  & 1.0 & 1.5   & 2.0 \\ \hline
M2(5,6,K) & $\sspcoef_{TS}$  &  0.1441 & 0.2280 & 0.2780 & 0.3242 & 0.3500 & 0.3536 & 0.3555 \\
& $\ceff$  &   0.0144 &   0.0228 &   0.0278  &  0.0324  &  0.0350 &   0.0354 &   0.0355 \\ \hline
M2(6,6,K)  & $\sspcoef_{TS}$  &  0.2944 & 0.5157 & 0.6725 & 0.9044 & 1.5225 & 2.0002 & 2.1966 \\ 
& $\ceff$  &   0.0245 &  0.0430  &  0.0560    &  0.0754 &   0.1269 &   0.1667 &   0.1831 \\ \hline
M2(7,6,K)    & $\sspcoef_{TS}$ &  0.3981 &  0.7158 & 0.9734 & 1.4217 & 2.0376 & 2.5648 & 2.7794 \\
   &  $\ceff$ & 0.0284  &  0.0511  &  0.0695  &  0.1016   &  0.1455   & 0.1832   & 0.1985 \\ \hline 
M3(7,6,K)    &$\sspcoef_{TS}$ &  0.3547 & 0.6007  & 0.8059  & 0.8941 & 0.8947  & 0.8947  & 0.8947  \\ 
     &  $\ceff$ &  0.0443 &   0.0751 &   0.1007  &  0.1118  &  0.1118  &  0.1118  &  0.1118 \\ \hline
M3(8,6,K) & $\sspcoef_{TS}$   & 0.5495 & 0.9754 &1.2882 & 1.6435 & 1.7369 &1.7369 &1.7369  \\ 
& $\ceff$   &  0.0611  &  0.1084  &  0.1431   & 0.1826  &   0.1930  &  0.1930 &    0.1930 \\ \hline
       \end{tabular}
 \end{center}
 \caption{SSP-TS coefficients and effective SSP-TS coefficients of sixth orderSSP-TS methods.}
          \label{sixthO}
     \end{table}

\vspace{-.1in}
 \subsection{ {\bf Sixth Order SSP-TS Methods}}
As shown in  Section \ref{OrderBarriers}, the highest order of accuracy this class of methods can obtain is $p=6$, and these methods must satisfy \eqref{SO2}. 
 We find sixth order methods with $s=5,6,7$ stages of type (M2).  As to methods with the special structure M3, we are unable to find methods with $s \leq p$,
 but we find M3(7,6,K) methods and M3(8,6,K) methods.
  In the first six rows of Table \ref{sixthO} we compare the SSP-TS and effective SSP-TS coefficients of the (M2) methods  with $s=5,6,7$ stages.
 In the last four rows of  Table  \ref{sixthO}  we compare the SSP coefficients and effective SSP coefficients for sixth order methods with $s=7,8$ stages.
  Figure \ref{Fig6thO} shows the SSP-TS coefficients of the optimized (M3) methods for seven and eight stages, which clearly plateau with respect to $K$
  (as can be seen in the tables as well). 
 For the sixth order methods, it is clear that M3(8,6,K) methods are most efficient for all values of $K$.

 \begin{figure}[t]
   \centering
     \begin{minipage}{0.475\textwidth}
        \centering
        \includegraphics[width=0.9\linewidth]{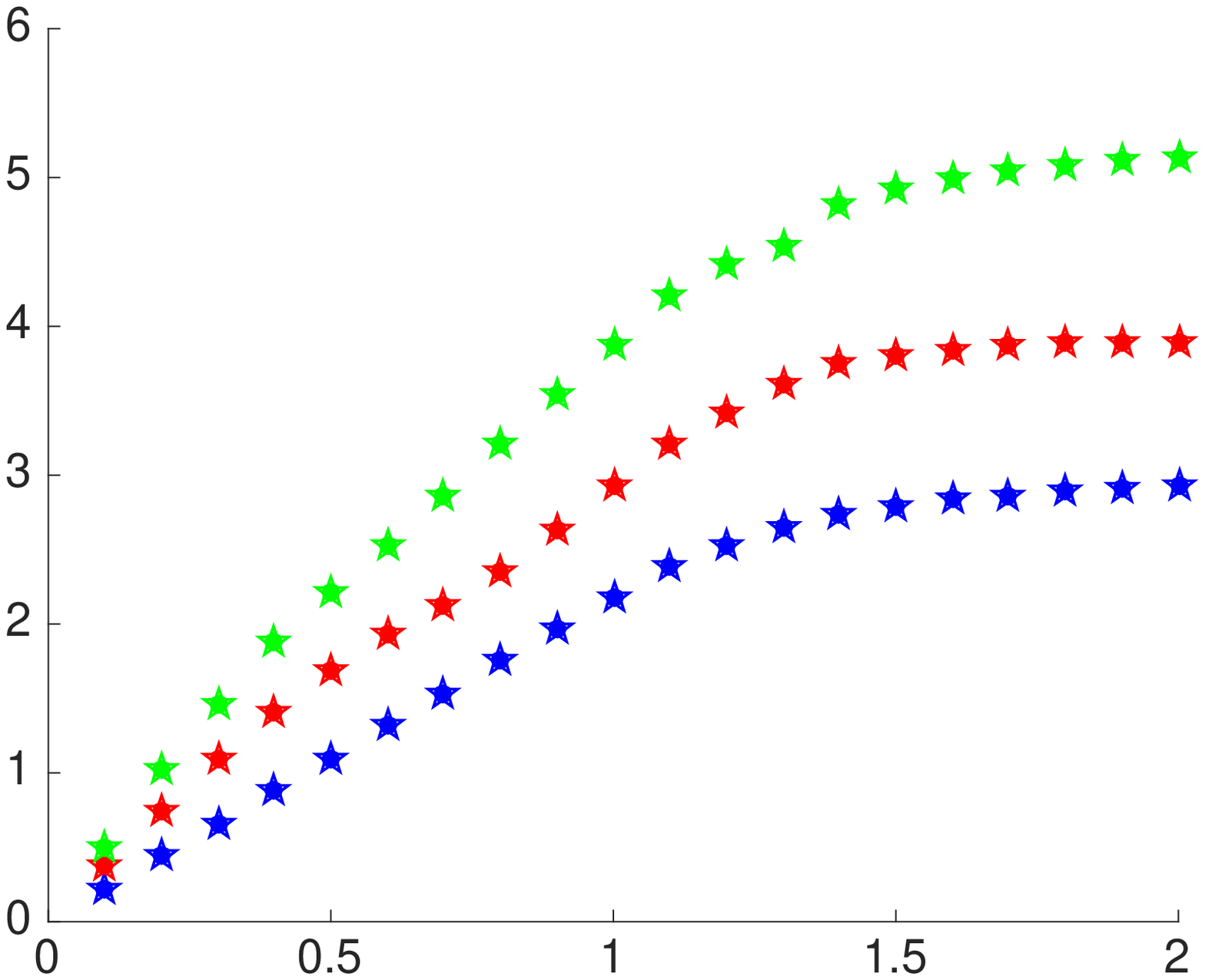}
\caption{SSP-TS coefficients  $\sspcoef_{TS}$ (on y-axis) for  \textcolor{blue}{M2(4,5,K)} and \textcolor{red}{M2(5,5,K)}
and \textcolor{green}{M2(6,5,K), where $K$ is on x-axis}.
\label{Fig5thO} }
    \end{minipage}%
    \hspace{0.2in}
    \begin{minipage}{0.475\textwidth}
        \centering
        \includegraphics[width=0.9\linewidth]{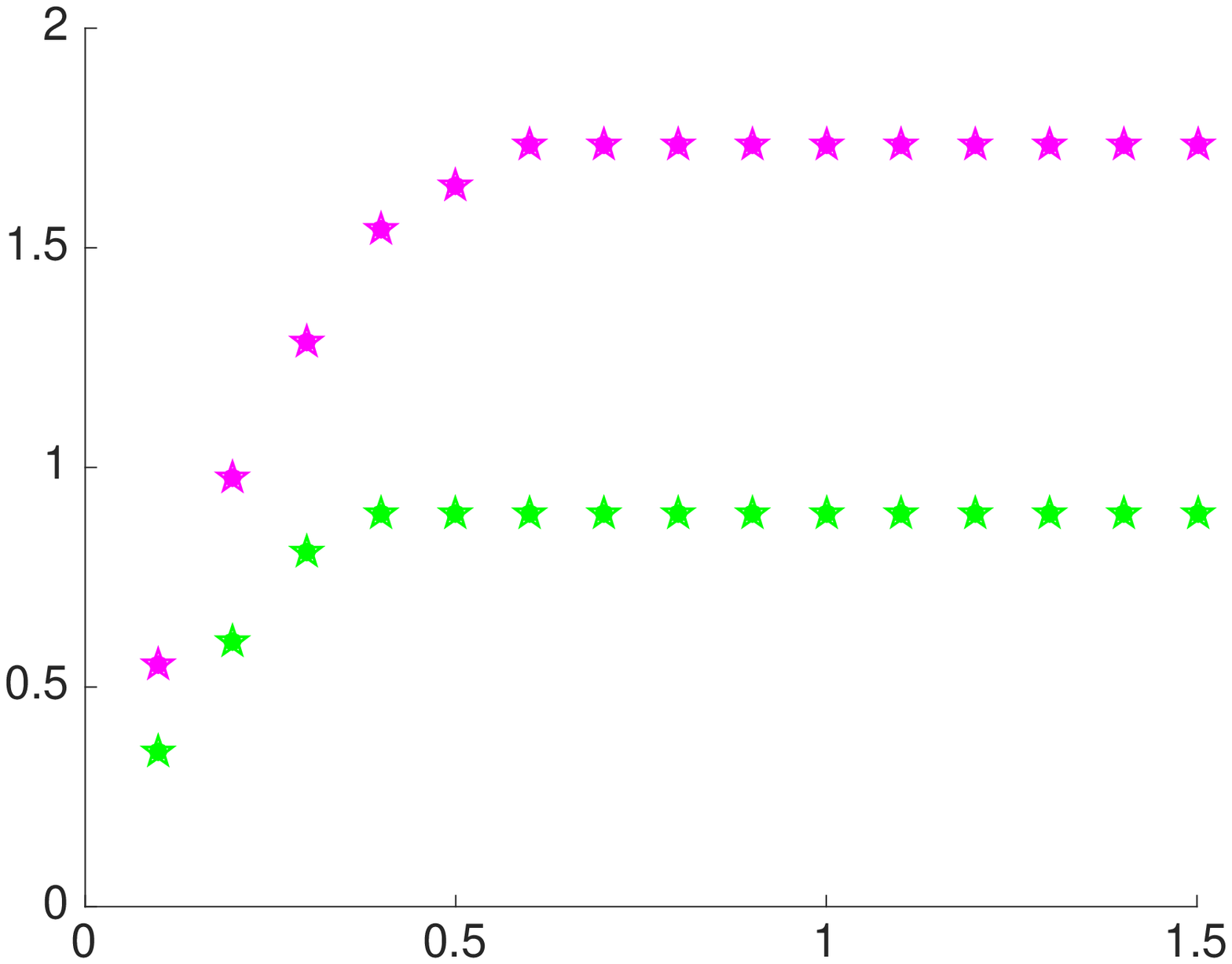}
\caption{SSP-TS coefficients $\sspcoef_{TS}$  (on y-axis) for  \textcolor{green}{M3(7,6,K)} and \textcolor{magenta}{M3(8,6,K),
where $K$ is on x-axis}
}
\label{Fig6thO}
\end{minipage}
\end{figure}

\subsection{Comparison with existing methods}
First, we wish to compare the methods in this work to those in our prior work \cite{MSMD}. 
If a spatial discretization satisfies the forward Euler condition \eqref{FE} and the second derivative condition \eqref{SD} it will
also satisfy the Taylor series condition \eqref{TS}, with \[ K = \tilde{K} \left( \sqrt{\tilde{K}^2 +2    } - \tilde{K}  \right).\] 
In this case, it is preferable to use the SSP-SD MDRK methods in \cite{MSMD}. However, in the case that 
the second derivative condition  \eqref{SD}  is not satisfied for any value of $\tilde{K} >0$, or if 
the Taylor series condition is independently satisfied with a larger $K$ than would be established from the two conditions, i.e.,
$ K > \tilde{K} \left( \sqrt{\tilde{K}^2 +2    } - \tilde{K}  \right)$, then it may be preferable to use one of the SSP-TS methods 
derived in this work.

Next, we wish to compare the methods in this work to those in \cite{Nguyen-Ba2010}, which was 
the first paper to consider an SSP property based on the forward Euler and Taylor series base conditions.
The approach  used in our work is somewhat similar to that in \cite{Nguyen-Ba2010} where the authors consider building time integration schemes which can be composed as convex combinations of forward Euler and Taylor series time steps, 
where they aim to find methods which are optimized for the largest SSP coefficients.  However, there are several differences between our approach and the one of \cite{Nguyen-Ba2010}, which results in the fact that in this paper we
are able to  find more methods, of higher order, and with better SSP coefficients.  
In addition, in the present work we find and prove an order barrier for SSP-TS methods.

The first difference between our approach and the approach in \cite{Nguyen-Ba2010} is that we allow computations of $\dot{F}$ of the intermediate values, 
rather than only $\dot{F}(u^n)$. 
Another way of saying this is that we consider SSSP-TS
methods that are not of type M3, while the methods considered in  \cite{Nguyen-Ba2010} are all of type M3. 
In some cases, when we restrict our search to M3 methods and $K=1$, we find methods with the same SSP coefficient as in \cite{Nguyen-Ba2010}. For example, 
HBT34  matches our SSP-TS M3(3,4,1) method with an SSP coefficient of $\sspcoef_{TS}=1$,
HBT44 matches our SSP-TS M3(4,4,1) method with $\sspcoef_{TS}=\frac{20}{11}$,
HBT54 matches our  SSP-TS M3(5,4,1)   method with $\sspcoef_{TS}=2.441$,
 and HBT55 matches our SSP-TS M3(5,5,1) method with an SSP coefficient of  $\sspcoef_{TS}=1.062$.
While methods of type M3 have their advantages, they are sometimes sub-optimal in terms of efficiency,
as we point out in the tables above. 

The second difference between the SSP-TS methods in this paper  and the methods in \cite{Nguyen-Ba2010}
is that in \cite{Nguyen-Ba2010} only one  method of order $p>4$ is reported, 
while we have many fifth and sixth order methods of various types and stages, optimized for a variety of $K$ values.

The most fundamental difference between our approach and the approach in \cite{Nguyen-Ba2010} is 
that our methods are optimized for the relationship between the forward Euler restriction and the Taylor series restriction while
the  time step restriction in the methods of  \cite{Nguyen-Ba2010}   is defined as the most restrictive of the forward Euler and Taylor series time step conditions.   
Respecting the minimum of the two cases will still satisfy the nonlinear stability property, 
 but this approach does not allow for a balance between the restrictions  considered, which can lead to severely more restrictive conditions. 
  In our approach we use the relationship between the two time-step
 restrictions to select optimal methods. For this reason, the methods we find have larger allowable time-steps in many cases. 
 To understand this a little better consider the case where the forward Euler condition is $\dt_{FE} \leq \dx$ and the 
 Taylor series condition is $\dt_{TS} \leq \frac{1}{2}\dx$. In the approach used in \cite{Nguyen-Ba2010}, the
base time step restriction is then $\dt_{max} = \max \{ \dt_{FE}, \dt_{TS} \} \leq \frac{1}{2}\dx$.  
The  HBT23 method in \cite{Nguyen-Ba2010} is a third order scheme with 2 stages which has a SSP coefficient of $\sspcoef_{TS}=1$,
so  the allowable time-step with this scheme will be the same  $\dt \leq  \sspcoef_{TS} \dt_{max} \leq \frac{1}{2}\dx$. 
On the other hand,  using our optimal SSP-TS M2(2,3,.5)  scheme, which has an SSP coefficient $\sspcoef_{TS}=.75$,
the allowable  time step is $\dt \leq \sspcoef_{TS} \dt_{FE}  \leq \frac{3}{4} \dx$, a 50\% increase.  
This is not only true when $K<1$: consider the case where $\dt_{FE}\leq \frac{1}{2}\dx$ 
and $\dt_{TS} \leq \dx$. Once again the  HBT23 method in \cite{Nguyen-Ba2010} 
 will have a time step restriction of $\dt \leq  \sspcoef_{TS} \dt_{max} \leq \frac{1}{2}\dx$, while our M2(2,3,2) method
has an SSP coefficient $\sspcoef_{TS}=1.88$, so that the overall time step restriction would be $\dt \leq \frac{1.88}{2} \dx =.94 \dx$, which is 88\% larger. 
 Even when the two base conditions are the same (i.e., $K=1$)
and we have  $\dt_{FE} \leq \dx$ and $\dt_{TS} \leq \dx$, the HBT23 method in \cite{Nguyen-Ba2010} gives an allowable time-step 
of $\sspcoef_{TS}=1$ while our SSP-TS M2(2,3,1) has an SSP coefficient $\sspcoef_{TS}=1.5$, so that our
method allows a time-step that is 50\% larger.\footnote{These efficiency measures do not account for the fact that the methods
 in \cite{Nguyen-Ba2010} are of type SSP-TS M3 and so require fewer funding evaluations. 
Correcting for this, our methods are still 10-40\% more efficient.}
These simple cases demonstrate that our methods, which are optimized for the value of $K$, will usually allow a larger SSP coefficient that  the methods obtained in \cite{Nguyen-Ba2010}.


%

\section{Numerical Results}
\subsection{Overview of numerical tests}
We wish to test our methods on what are now considered standard benchmark tests in the SSP community.  
In this subsection we preview our results, which we then present
in more detail throughout the remainder of the section.

First, in the tests in  Section \ref{Example1}  we focus on how the strong stability properties of these methods are observed in practice, by considering the
 total variation of the numerical solution. We focus on two scalar PDEs: the linear advection equation and Burgers' equation, using simple first order spatial discretizations 
which are known to  satisfy a  total variation diminishing property over time for the forward Euler and Taylor series building blocks.
We want to ensure that our numerical approximation to these solutions observe similar properties 
as long as the predicted SSP time step restriction, $\Delta t \leq \sspcoef_{TS} \Delta t_{FE}$, is respected.  
These scalar one-dimensional partial differential equations are chosen for their simplicity so we may understand the behavior of the numerical solution, 
but the discontinuous initial conditions may lead to instabilities if standard time discretization techniques are employed.
Our tests  show that the methods we design here  preserve these properties as expected by the theory. 

In Example 2, we extend the results from Example 1 to the case where we use the higher order weighted essentially non-oscillatory 
 (WENO) method, which is not provably TVD but gives results that have very small increases in total variation.
We demonstrate that our methods out-perform other methods, such as the SSP-SD MDRK methods in \cite{MSMD}, and that non-SSP methods
that are standard in the literature do not preserve the TVD property for any time-step.

In many of these examples we are concerned with the total variation diminishing property.
To  measure the sharpness of the SSP condition we compute the maximal observed rise in total variation over each step, defined by
\begin{equation}
\label{eqn:increase-in-tv}
	\max_{0 \leq n \leq N-1} \left( \| u^{n+1} \|_{TV} - \| u^n \|_{TV} \right), 
\end{equation}
as well as the maximal observed rise in total variation over each stage, defined by
\begin{equation}
	\max_{1 \leq j \leq s} \left( \| y^{(j+1)} \|_{TV} - \| y^{(j)}  \|_{TV} \right) , 
\end{equation}
where  $y^{(s+1)}$ corresponds to $u^{n+1}$.
The quantity of interest is the time-step $\dt_{obs}$, or the SSP coefficient  $\sspcoef_{TS}^{obs} = \frac{\dt_{obs}}{\DtFE}$  at which this rise becomes significant, as defined by a maximal increase of $10^{-10}$.

It is important to notice that theSSP-TS methods we designed depend on the value of $K$ in \eqref{TS}.
However, in practice we often do not know the exact value of $K$. In Example 3 we investigate what happens when we use
spatial discretizations with a given value of $K$ with time discretization methods designed for an incorrect value of $K$.
We conclude that although in some cases a smaller step-size is required, for methods of type M3 there is generally no
adverse result from selecting the wrong value of $K$.

In Example 4 we investigate the increased flexibility in the choice of spatial discretization
that results from relying on the \eqref{FE} and \eqref{TS} base conditions. The 
only  constraint in the choice of differentiation operators $D_x$ and $\tilde{D}_x$ 
(described at the end of Section 1.2) is that the resulting 
building blocks must satisfy the monotonicity conditions \eqref{FE} and \eqref{TS}  in the desired convex functional $\| \cdot \|$.
As noted above, this constraint is less restrictive than requiring that \eqref{FE} and \eqref{SD} are satisfied: any spatial discretizations
for which \eqref{FE} and \eqref{SD}  are satisfied will also satisfy  \eqref{TS}. However, there are some 
spatial discretizations that satisfy  \eqref{FE} and \eqref{TS} that do not satisfy \eqref{SD}. In Example 4 
we find that choosing spatial discretizations that satisfy \eqref{FE} and \eqref{TS}  but not \eqref{SD} allows
for larger time-steps before the rise in total variation.
And finally, in Example 5, we demonstrate the positivity preserving behavior of our methods 
when applied to a nonlinear system of equations.

\subsection{On the numerical implementation of the second derivative.}

In the following numerical test cases the spatial discretization  is performed as follows: 
at each iteration we take the known value $u^n$ and compute the flux
$f(u^n) = - u^n$ in the linear case and $f(u^n) = \frac{1}{2} \left(u^n \right)^2$ for Burgers' equation. 
Now to compute the  spatial derivative $f(u^n)_x$ we use an operator $D_x$ and compute 
\[ u^n_t = - f(u^n)_x    \;\;\;\;  \rightarrow \;\;\;\; u^n_t =D_x(- f(u^n)).   \]
In the numerical examples below the differential operator $D_x$ will represent, depending on the problem,
 a first order upwind finite difference scheme and the fifth order finite difference WENO 
method \cite{jiang1996}. In our scalar  test cases $f'(u)$ does not change sign, so we avoid 
 flux splitting.
 
 Now we have the approximation to $U_t$ at time $t^n$, and wish to compute the approximation to $U_{tt}$.  
For the linear advection problem, this is very straightforward as $U_{tt} =  U_{xx}$. To compute this, we take $u_x$ as computed before, 
and differentiate it again. 
For Burgers' equation, we have $U_{tt} =  \left(- U U_t \right)_x$.  We take the approximation to $U_t$ that we 
obtained above,
and we multiply it by $u^n$, then differentiate in space once again. 
 In pseudocode, the calculation takes the form:
\begin{eqnarray*}
u^n_{tt} =(- f'(u^n)  u^n_t)_x    \;\;\;\; &\rightarrow \;\;\;\; u^n_{tt} = \tilde{D}_x(- f'(u^n)  u^n_t  ). 
\end{eqnarray*}
Using these, we can now construct our two building blocks
\[ \mbox{\bf Forward Euler} \; \; \; u^{n+1} = u^n +\dt u^n_t ,\]
\[ \mbox{\bf Taylor series} \; \; \; u^{n+1} = u^n +\dt u^n_t +  \frac{1}{2} \dt^2 u^n_{tt}.\]
In choosing the spatial discretizarions $D_x$ and $\tilde{D}_x$ it is important that these building blocks satisfy \eqref{FE} and 
\eqref{TS} in the desired convex functional $\| \cdot \|$.

\subsection{Example 1: TVD first order finite difference approximations}
\label{Example1} 
In this section we  use first order spatial discretizations, that are provably total variation diminishing (TVD),
coupled with a variety of time-stepping methods. We look at the maximal rise in total variation.

\noindent{\bf Example 1a: Linear advection.}
As a first test case, we consider  a linear advection problem
\begin{eqnarray} \label{linadv}
 U_t - U_x =0,
 \end{eqnarray} 
on a domain $x \in [-1,1]$, with step-function initial conditions
\begin{equation}
\label{eqn:sq_wave_ic}
	u_0(x) = \left\{ \begin{array}{ll}
		1 & \text{if}\ -\frac{1}{2} \leq x \leq \frac{1}{2}, \\
		0 & \text{otherwise},
\end{array} \right. 
\end{equation}
 and periodic boundary conditions. 
 This simple example is chosen as our experience has shown \cite{SSPbook2011} that this problem often 
demonstrates the sharpness of the SSP time-step.

\begin{table}[ht]
\begin{center}
\begin{tabular}{|l|ll|ll|} \hline
\multicolumn{5}{|c|}{Linear Advection} \\ \hline
method   &  $\sspcoef_{TS}^{pred}$ &  $\sspcoef_{TS}^{obs}$ &  $\ceff^{pred}$ &  $\ceff^{obs}$ \\ \hline
FE           &  1.0000 & 1.0000 & 1.00 & 1.00 \\ 
TS           &  1.0000 & 1.0000 & 0.50 & 0.50\\ \hline 
M2(3,4,1)&  1.8788 & 1.8788 & 0.31 & 0.31 \\ 
M3(3,4,1)& 1.0000 & 1.0000 & 0.25 & 0.25 \\
M2(4,4,1)&  2.6668 &  2.6668 & 0.33 & 0.33 \\
M3(4,4,1)& 1.8181 & 1.8181 & 0.36 & 0.36 \\
M2(5,4,1)&  3.5381 &  3.6291 & 0.35 & 0.36 \\ 
M3(5,4,1)& 2.4406 & 2.4406 & 0.40 & 0.40 \\ \hline
M2(4,5,1)&  2.1864 & 2.2239 & 0.27 & 0.27 \\
M2(5,5,1)&  2.9280 & 3.1681 & 0.29 & 0.31 \\
M3(5,5,1)& 1.0625 & 1.5710 & 0.17 & 0.26 \\ 
M2(6,5,1)&  3.8749 & 3.8749 & 0.32 & 0.32  \\ 
M3(6,5,1)& 1.8207 & 1.9562 & 0.26 & 0.27 \\ \hline
M2(5,6,1)& 0.3500 & 1.9398 & 0.03 & 0.19 \\
M2(6,6,1)& 1.5225 & 2.3548 & 0.12 & 0.19 \\
M2(7,6,1)& 2.1150 & 2.3695 & 0.15 & 0.19 \\  
M3(7,6,1)& 0.8946 & 1.3207 &  0.11& 0.16 \\
M3(8,6,1)&  1.7369& 1.9861 & 0.19 & 0.22 \\  \hline
\end{tabular}
\begin{tabular}{|l|ll|ll|} \hline
\multicolumn{5}{|c|}{Burgers'} \\ \hline
method   &  $\sspcoef_{TS}^{pred}$ &  $\sspcoef_{TS}^{obs}$ &  $\ceff^{pred}$ &  $\ceff^{obs}$ \\ \hline
FE           &  1.0000 & 1.0000 & 1.00 & 1.00 \\ 
TS           &  1.0000 & 1.0000 & 0.50 & 0.50\\ \hline 
M2(3,4,1)&  1.8788 & 1.8788 & 0.31 & 0.31 \\ 
M3(3,4,1)& 1.0000 & 1.0000 & 0.25 & 0.25 \\
M2(4,4,1)&  2.6668 &  2.6668 & 0.33 & 0.33 \\
M3(4,4,1)& 1.8181 & 1.8181 & 0.36 & 0.36 \\
M2(5,4,1)&  3.5381 &  3.6102 & 0.35 & 0.36 \\ 
M3(5,4,1)& 2.4406 & 2.4406 & 0.40 & 0.40 \\ \hline
M2(4,5,1)&  2.1864 & 2.2130 & 0.27 & 0.27 \\
M2(5,5,1)&  2.9280 & 3.1009 & 0.29 & 0.31 \\
M3(5,5,1)& 1.0625 & 1.5436 & 0.17 & 0.25 \\ 
M2(6,5,1)&  3.8749 & 3.8749 & 0.32 & 0.32  \\ 
M3(6,5,1)& 1.8207 & 2.0003 & 0.26 & 0.28 \\ \hline
M2(5,6,1)& 0.3500 & 1.9239 & 0.03 & 0.19 \\
M2(6,6,1)& 1.5225 & 2.2875 & 0.12 & 0.19 \\
M2(7,6,1)& 2.1150 & 2.3189 & 0.15 & 0.16 \\  
M3(7,6,1)& 0.8946 & 1.2893 &  0.11& 0.16 \\
M3(8,6,1)& 1.7369 & 1.9734 & 0.19 & 0.21 \\  \hline
\end{tabular}
\caption{\label{tab:ex1} Example 1:  $\sspcoef_{TS}^{pred}$ and $\sspcoef_{TS}^{obs}$  for SSP-TS M2 and M3 methods.}
\end{center}
\end{table}

For the spatial discretization we use a first order forward difference for the first and second derivative:
\[
	F(u^n)_j := \frac{u^n_{j+1}-u^n_j}{\Delta x} \approx U_x( x_j ), \; \; \; \; \; \;
		\mbox{and} \; \; \; \; \; \;
	\tilde{F}(u^n)_j \approx \tilde{F}(u^n)_j  := \frac{u^n_{j+2}- 2 u^n_{j+1} + u^n_{j}}{\Delta x^2} \approx  U_{xx}( x_j ).
\]
These spatial discretizations satisfy:
\begin{description}
\item{\bf Forward Euler condition} \hspace{0.25in}
$ u^{n+1}_j = u^n_j + \frac{\Delta t}{\Delta x} \left( u^n_{j+1} - u^n_j \right)$
 is  TVD for $ \Delta t \leq  \Delta x $, \\
 \hspace*{-.35in} and
\item{\bf Taylor series  condition} \hspace{0.25in}
$ u^{n+1}_j = u^n_j + \frac{\Delta t}{\Delta x} \left( u^n_{j+1} - u^n_j \right) 
+ \frac{1}{2} \left(  \frac{\dt}{\dx} \right)^2 \left( u^n_{j+2} - 2 u^n_{j+1} + u^n_{j} \right) $
 is TVD for $\Delta t \leq  \Delta x $.
\end{description}
So that $\DtFE= \Delta x $ and in this case we have $K=1$ in \eqref{TS}.
Note that the second derivative discretization used above {\em does not} satisfy the
{\bf Second Derivative} condition \eqref{SD}, so that most of the  methods we devised 
in \cite{MSMD}  do not guarantee strong stability preservation for this problem.  

 For all of our simulations for this example, we use a fixed grid of $M=601$ points, for a grid size $\Delta x = \frac{1}{600}$, 
 and a time-step $\Delta t = \lambda \dx$
 where we vary $ \lambda $ from $\lambda = 0.05$ until beyond the point where the TVD property is violated. 
 We step each method forward by $N=50$ time-steps and compare the 
 performance of the various time-stepping methods  constructed earlier in this work, for $K = 1$. 
 We define the observed SSP coefficient $\sspcoef_{TS}^{obs}$ as the multiple of $\DtFE$ for which the maximal rise in total variation
 exceeds $10^{-10}$.
 
We verify that the observed  values of $\DtFE$ and $K$ match the predicted values, and  test this problem 
to see how well the observed SSP coefficient $\sspcoef_{TS}^{obs}$ matches the predicted  SSP coefficient $\sspcoef_{TS}^{pred}$ 
for the fourth, fifth, and sixth order methods. The results are listed in the left-hand columns of Table \ref{tab:ex1}.

\noindent{\bf Example 1b: Burgers'  equation}
We repeat the example above with all the same parameters but for the problem
\begin{eqnarray} \label{burgers}
U_t + \left( \frac{1}{2} U^2 \right)_x =0
\end{eqnarray}
on $x \in (-1,1)$.
Here we use  the spatial derivatives:
$$F(u^n)_j := - \frac{f^n_{j}-f^n_{j-1}}{\Delta x} \approx -f(U)_x( x_j ),$$ 
 and
 $$\tilde{F}(u^n)_j \approx  \tilde{F}(u^n)_j :=  - \frac{f'(u^n_{j}) F(u^n)_{j} -f'(u^n_{j-1}) F(u^n)_{j-1}}{\Delta x}  \approx  (f'(U)f(U)_x)_{x}( x_j) .$$
Using Harten's lemma we can easily show that these definitions of $F$ and $\tilde{F}$ cause the Taylor series condition to be  satisfied for 
 $\Delta t \leq \Delta x$.
The results are quite similar to those of the linear advection equation in Example 1a, as can be seen in the right-hand columns of 
Table \ref{tab:ex1}.

The results from these two studies show that the
SSP-TS methods provide  a reliable guarantee  of the allowable time-step for which the method preserves the 
strong stability condition in the desired norm. For methods of order $p=4$, we observe that the SSP coefficient is sharp: the
predicted and observed values of the SSP coefficient are identical for all the fourth order methods tested.
For methods of higher order ($p=5,6$) the observed SSP coefficient is often significantly higher than the minimal value guaranteed by the theory.

\subsection{Example 2: Weighted essentially non-oscillatory (WENO) approximations}

In this section we re-consider  the nonlinear Burgers' equation \eqref{burgers}
\[ U_t + \left(\frac{U^2}{2} \right)_x =0,\]
on $x \in (-1,1)$.
We use  the step function initial conditions \eqref{eqn:sq_wave_ic},
and periodic boundaries. We use $M=201$ points in the spatial domain, so that $\Delta x =\frac{1}{100}$, and  we step forward for $N=50$ time-steps
and measure the maximal rise in total variation for each case. 

For the spatial discretization, we use the fifth order  finite difference  WENO method \cite{jiang1996} in space, 
as this is a high order method that can handle shocks. We describe this method in Appendix \ref{WENO}.
Recall that the motivation for the development of SSP multistage multi-derivative time-stepping is for use in conjunction with 
high order methods for problems with shocks. Ideally, the specially designed spatial discretizations satisfy \eqref{FE} and \eqref{TS}.
Although the weighted essentially non-oscillatory (WENO) methods do not have a theoretical guarantee of this type, 
 in practice we observe that these methods do control the rise in total variation, as long as the step-size is below a certain threshold.

Below, we refer to the WENO method on a  flux with $f'(u) \geq 0$ as WENO$^+$ defined in \eqref{WENO+} and to to the corresponding method
on a  flux with $f'(u) \leq 0$ as WENO$^-$ defined in \eqref{WENO-}. 
Because $f'(u)$ is strictly non-negative in this example, we do not need to use flux splitting,
and use $D =$WENO$^+$. For the second derivative we have the freedom to use  $\tilde{D}_x=$ WENO$^+$ or 
 $\tilde{D}_x=$ WENO$^-$.  In this example, we use $ \tilde{D}_x=D_x=$ WENO$^+$.  In Example 4 below we show that this is more
 efficient.


In Figure \ref{WENOBurgersTest} on the left, we compare the performance of our SSP-TS M3(7,5,1)  andSSP-TS M2(4,5,1) methods, 
which both have eight function evaluations per time-step, and our SSP-TS M3(5,5,1), which has 
six function evaluations per time-step, to the SSP-SD MDRK(3,5,2) in \cite{MSMD} and non-SSP RK(6,5) 
Dormand-Prince method  \cite{DormandPrince1980}, which also have
six function evaluations per time-step. 
We note that we use the SSP-SD MDRK(3,5,2) (designed for $K=2$) because this method performs best compared to other 
explicit two-derivative multistage methods
designed for different values of $K$.
Clearly, the non-SSP method is not safe to use on this example. 
The M3 methods are most efficient, allowing the largest time-step per function evaluation before the total variation begins to rise.

This conclusion is also the case for the sixth order methods. In Figure \ref{WENOBurgersTest} on the right,
we compare our SSP-TS M3(9,6,1) and M2(5,6,1) methods, which both have ten function evaluations per time-step, and our M3(7,6,1), which has 
eight function evaluations per time-step, to the SSP-SD MDRK(4,6,1) and non-SSP RK(8,6) method given 
in Verner's paper table  \cite{Verner2014}, which also have
eight function evaluations per time-step. Clearly, the non-SSP method is not safe to use on this example. 
The M3 methods are most efficient, allowing the largest time-step per function evaluation before the total variation begins to rise.

This example demonstrates the need for SSP methods: classical non-SSP methods do not control the rise in total variation. We also observe
that the methods of type M3 are efficient, and may be the preferred choice of methods for use in practice.

\begin{figure}[t!] \hspace{0.25in} 
\includegraphics[width=0.475\textwidth]{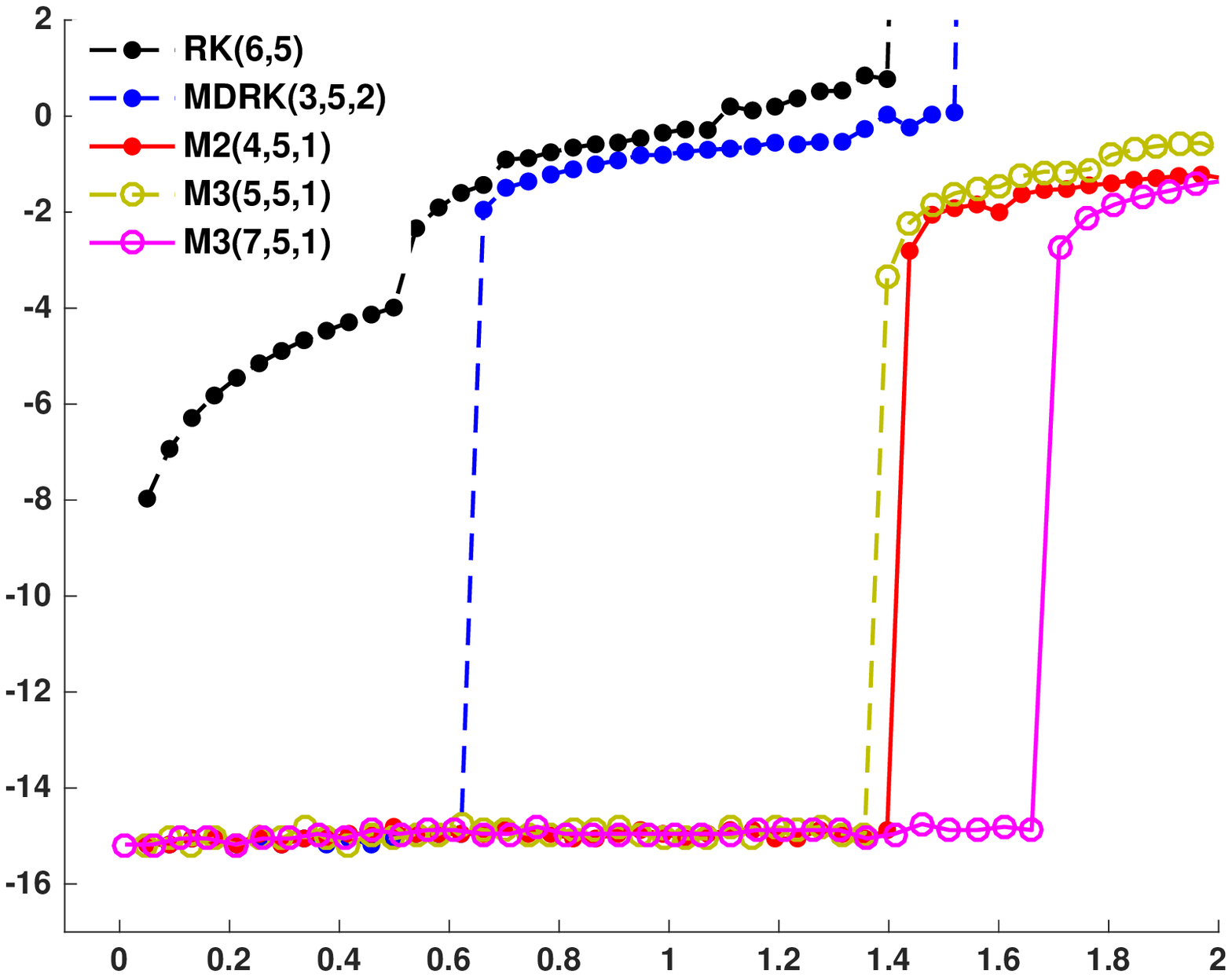} \hspace{-.35in}
\includegraphics[width=0.475\textwidth]{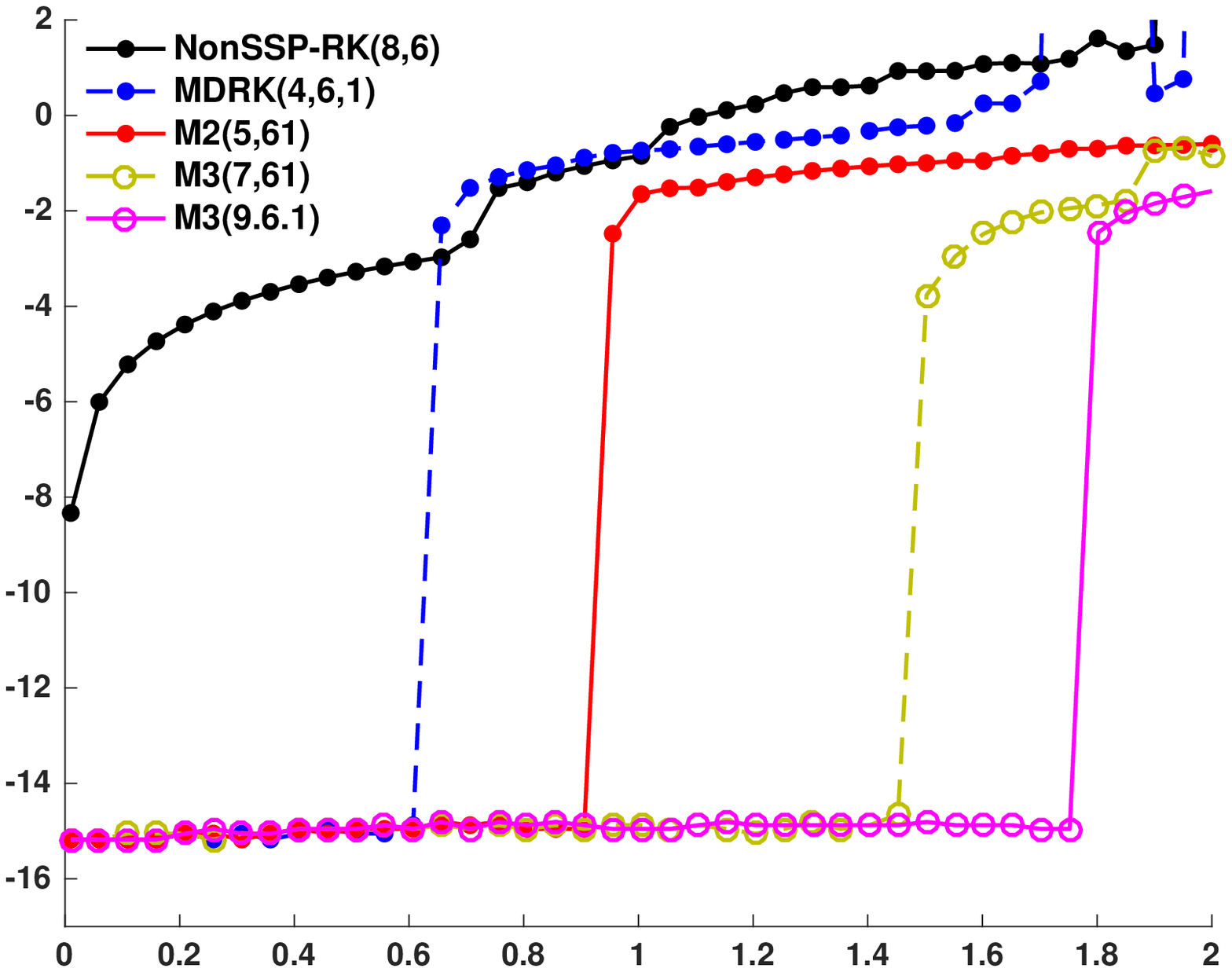} 
\caption{Example 2:
Comparison of the maximal rise in total variation (on y-axis) as a function of $\lambda=\frac{\dt}{\dx}$ (on x-axis)
 for a selection of  time-stepping methods for evolving Burgers' equation with WENO spatial discretizations.
Left: fifth order methods. Right: sixth order methods.
}
 \label{WENOBurgersTest}
\end{figure}

\subsection{Example 3: Testing methods designed with various values of  $K$}

In general, the value of $K$ is not exactly known for a given problem, so we cannot choose a method that is optimized for the correct $K$.
We wish to investigate how methods with different values of $K$ perform for a given problem.
 In this example, we  re-consider  the linear advection equation \eqref{linadv}
\[ U_t = U_x \] 
with step function initial conditions \eqref{eqn:sq_wave_ic}, and periodic boundary conditions
on $x \in (-1,1)$. We use  the fifth order WENO method  with 
$M=201$ points in the spatial domain, so that $\Delta x =\frac{1}{100}$, and  we step forward for $N=50$ time-steps
and measure the maximal rise in total variation for each case. 
Using this example, we investigate how time-stepping methods optimized for different $K$ values
perform on the linear advection with finite difference spatial approximation test case above, 
where it is known that $K=1$. We use  a variety of fifth and sixth order methods, 
designed for $0.1 \leq K \leq 2$ and give the  value of $\lambda = \frac{\dt}{\dx}$ for which the maximal rise in total variation becomes large, 
when applied to the linear advection problem.  

In Figure \ref{Kstudy} (left) we give the  observed value (solid lines)
 of $\lambda$ for a number of SSP-TS methods, M2(4,5,K), M2(5,5,K), M2(6,5,K),
M3(5,5,K), and M3(6,5,K), and the corresponding predicted value (dotted lines)
that a method designed for $K=1$ should give.  In  Figure \ref{Kstudy} (right) we repeat this study with
sixth order methods M2(5,6,K), M2(6,6,K), M3(7,6,K), and M3(8,6,K). 
We observe that while choosing the correct $K$ value can be beneficial, and is certainly important theoretically, in practice using methods 
designed for different $K$ values often makes little difference, particularly when the method is optimized for a value close to the
correct $K$ value. For the sixth order methods in particular, the observed values of the SSP coefficient are all larger than the
predicted SSP coefficient.

\begin{figure}[t!] 
\begin{center} 
\includegraphics[width=0.5\textwidth]{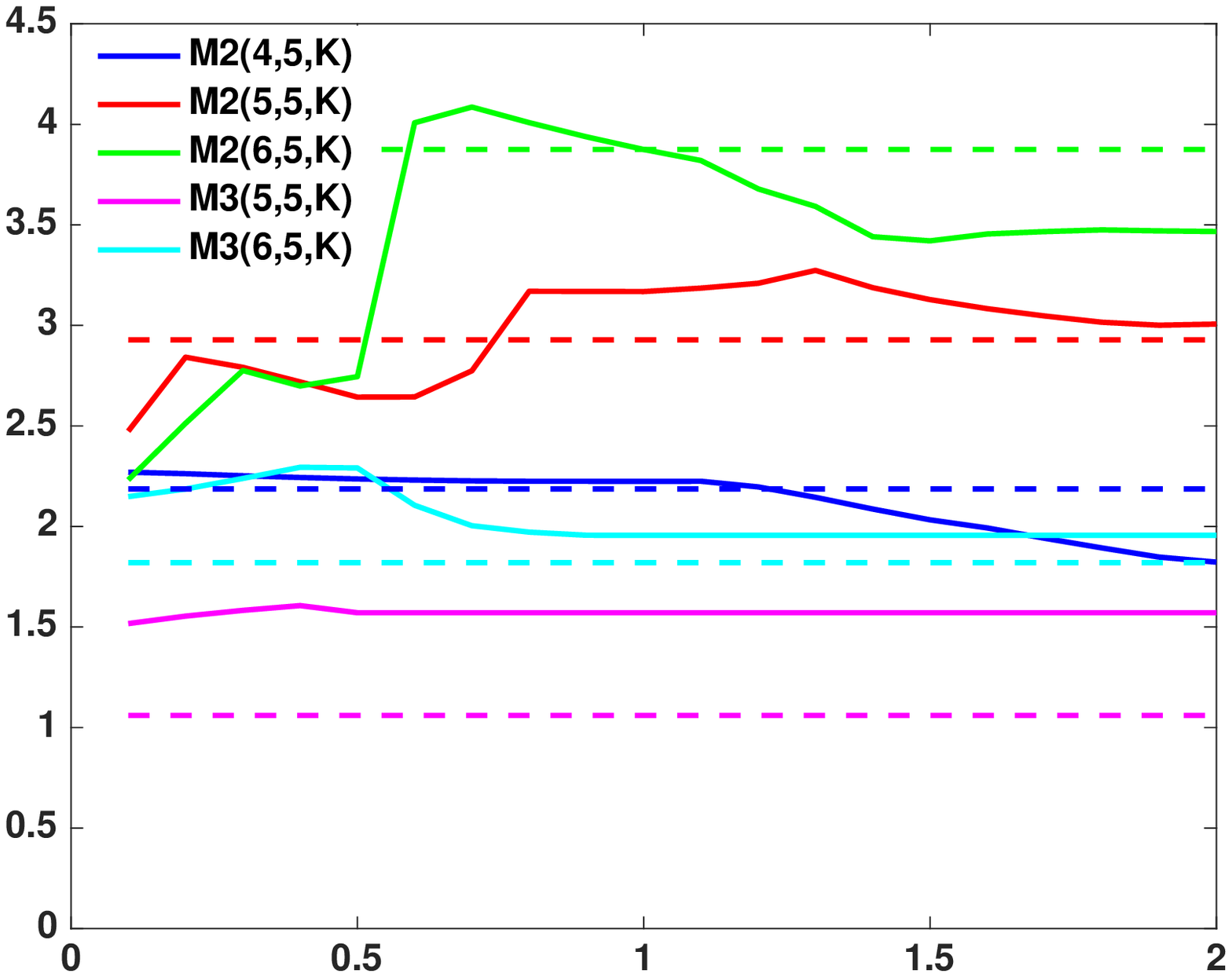}  \hspace{-0.1in}
\includegraphics[width=0.5\textwidth]{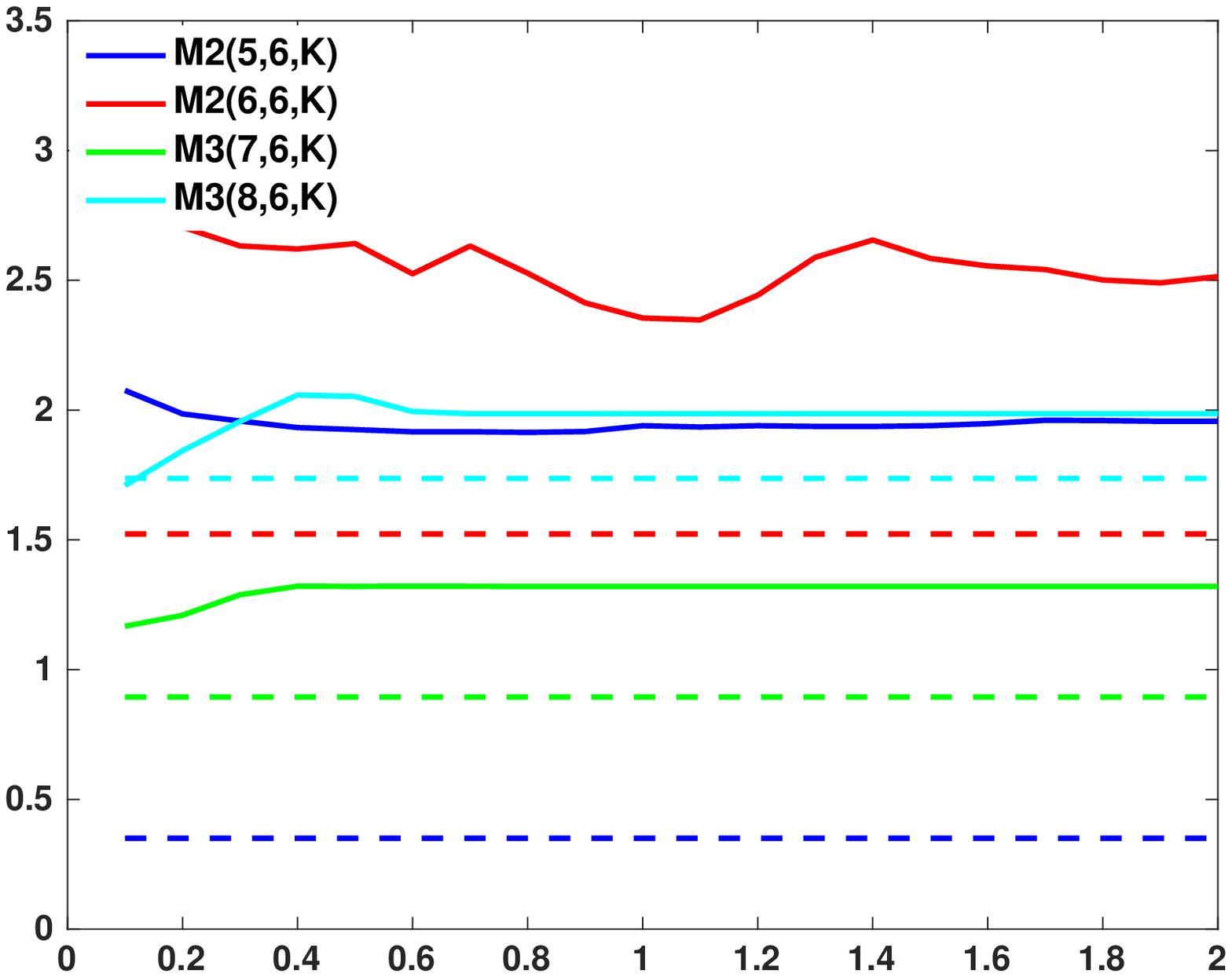} 
\end{center}
\caption{Example 3: The observed value of $\lambda=\frac{\dt}{\dx}$ such that the method is TVD (y-axis) 
when methods designed for different $K$ values (on x-axis) are applied to the problem with $K=1$. 
For each method, the observed value(solid line) is higher than the predicted value (dashed line).}
 \label{Kstudy}
\end{figure}

\subsection{Example 4: The benefit of different base conditions}

In \cite{MSMD} we use the choice of $D_x= WENO^{+}$  defined in \eqref{WENO+}, followed by $\tilde{D}_x=WENO^{-}$  
defined in \eqref{WENO-},
by analogy to the first order finite difference  for the linear advection case $U_t = U_x$, 
where we use a differentiation operator $D_x^{+}$  followed by the downwind differentiation operator $D_x^{-}$ to produce a centered difference
for the second derivative. In fact, this approach makes sense for these cases because it respects the properties of the flux for the second derivative 
and consequently satisfies the second derivative condition \eqref{SD}. 
However, if we simply wish the Taylor series formulation to satisfy a 
TVD-like condition,  we are free to use the same operator ($WENO^{+}$  or $WENO^{-}$, as appropriate)  twice, 
and indeed this gives a larger allowable $\dt$.

In Figure \ref{PPvsPM} we show how using the repeated upwind discretization  $D= WENO^{-}$ and $\tilde{D}_x = WENO^{-}$ (solid lines) 
which satisfy the Taylor Series Condition \eqref{TS}  but not the second derivative  condition \eqref{SD} to approximate the higher order 
derivative  allows for a larger time-step than the spatial discretizations (dashed lines) used in \ref{MSMD}. 
We see that for the fifth order methods the rise in total variation always occurs  for larger $\lambda$   for the solid lines  
($\tilde{D}_x=D_x=WENO^{-}$) than  for the dashed lines  ($D_x=WENO^{-}$ and $\tilde{D}_x= WENO^{+}$), even for the method 
designed in \cite{MSMD} to be SSP for the second case but not the first case.
For the sixth order methods the results are almost the same, though the SSP-SD MDRK(4,6,1) method that is SSP for 
base conditions of the type in \cite{MSMD}  performs identically in both cases.
These  results demonstrate that  requiring that the 
spatial discretizations only satisfy \eqref{FE} and  \eqref{TS}  (but not necessarily  \eqref{SD}) results in 
methods with larger allowable time-steps.

\begin{figure}[t!] 
\begin{center} 
\includegraphics[width=0.5\textwidth]{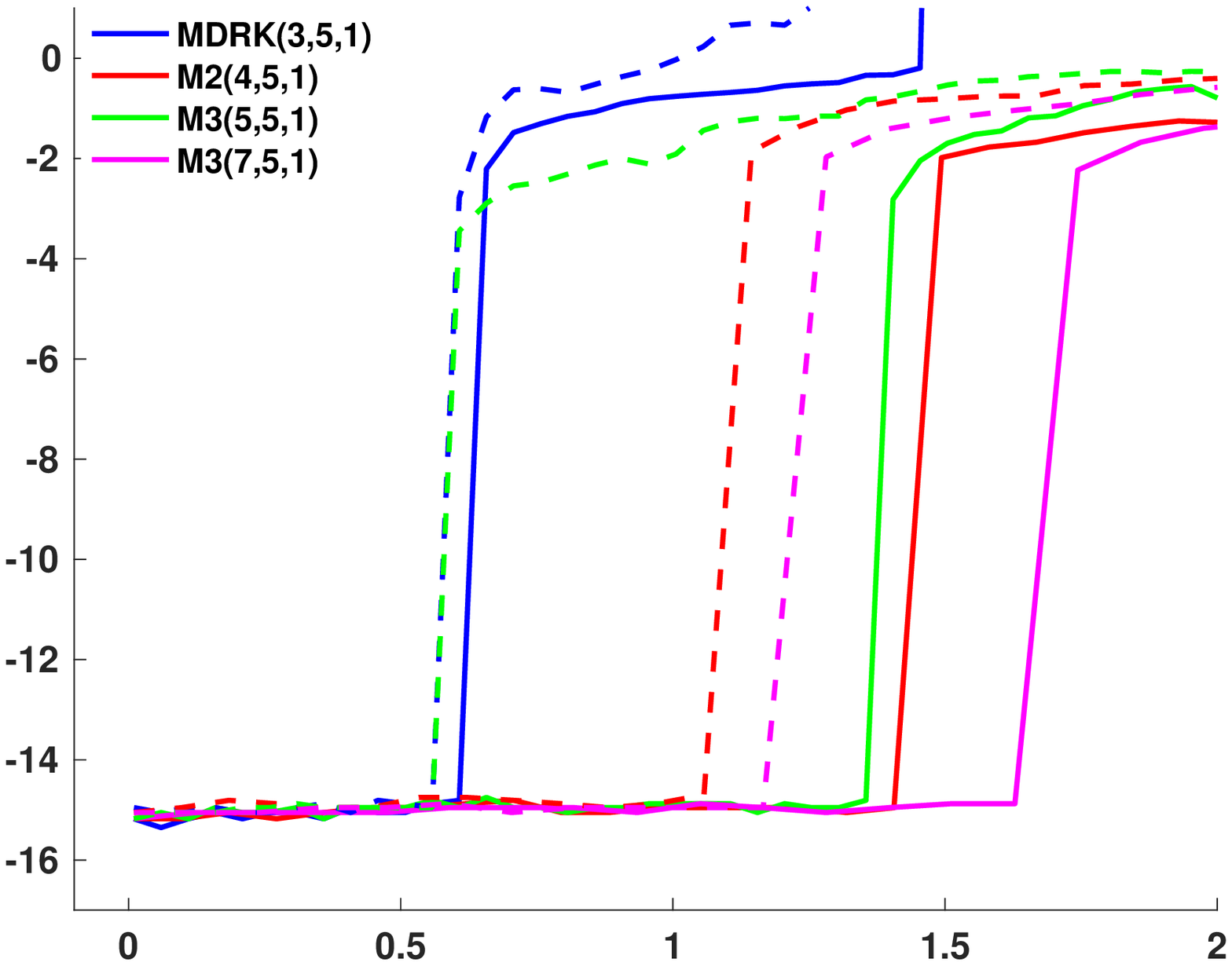}  \hspace{-0.1in}
\includegraphics[width=0.5\textwidth]{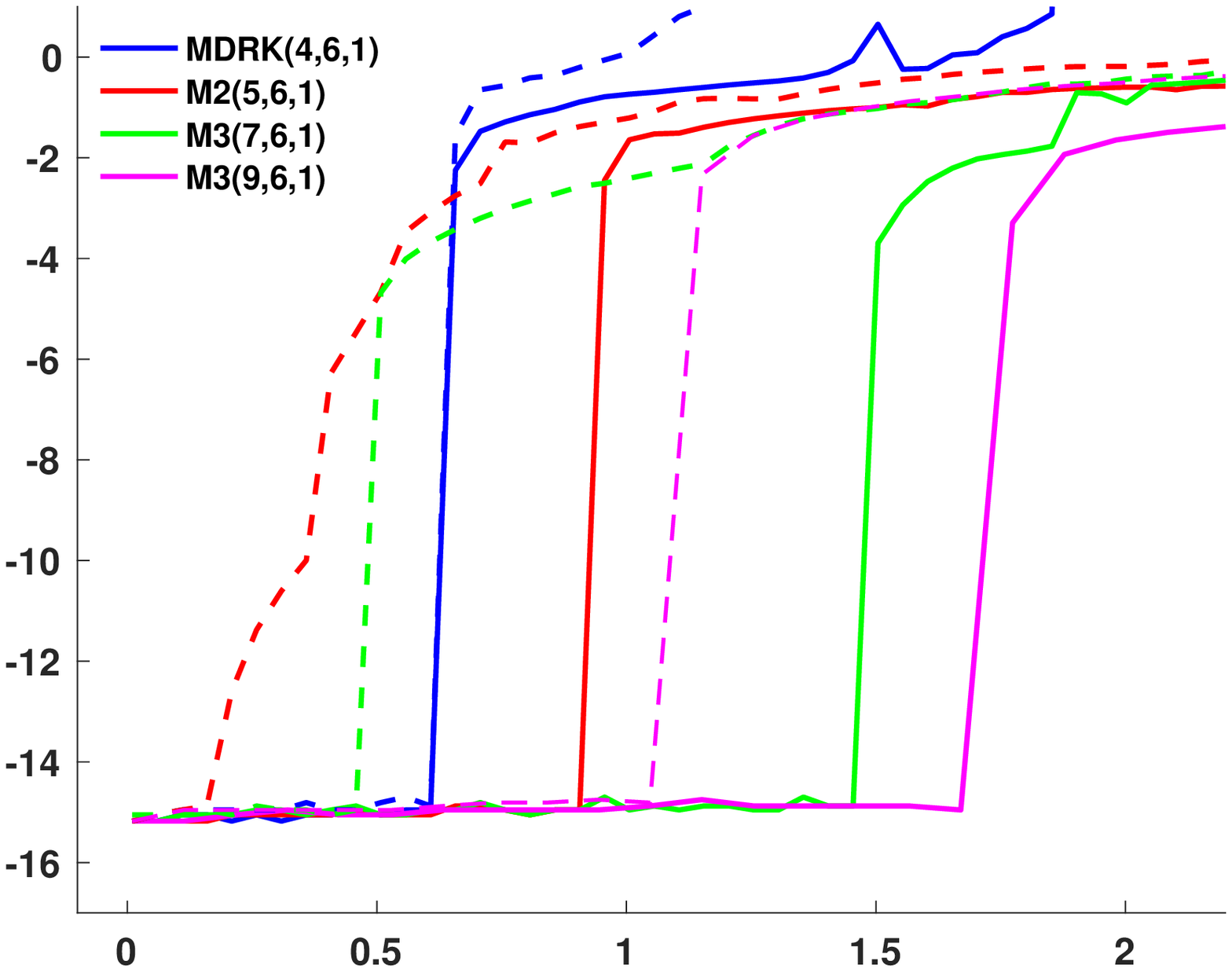} 
\end{center}
\caption{Example 4:  The maximal rise in total variation (on y-axis) for  values of $\lambda$ (on the x-axis).
Simulations using the repeated upwind discretization  $D_x= WENO^{-}$ and $\tilde{D}_x = WENO^{-}$ (solid lines) 
are more efficient than those using $D_x=WENO^{-}$ and $\tilde{D}_x= WENO^{+}$ (dashed lines). This demonstrates the
 enhanced allowable time-step afforded by the SSP-TS methods.}
 \label{PPvsPM}
\end{figure}

\subsection{Example 5: Nonlinear Shallow Water Equations}

As a final test case we consider the shallow water equations, where we are concerned with the preservation of positivity in the numerical solution.  
The shallow water equations \cite{BuKuEtWeDa09} are a non-linear system of hyperbolic conservation laws defined by
\begin{align*}
\begin{pmatrix}
h \\ hv
\end{pmatrix}_t +
\begin{pmatrix}
hv \\ hv^2 + \frac{1}{2} g h^2
\end{pmatrix}_x
=
\begin{pmatrix} 0 \\ 0 \end{pmatrix},
\end{align*}
where $h(x,t)$ denotes the water height at location $x$ and time $t$, 
$v(x,t)$ the water velocity, $g$ is the gravitational constant, and $U = (h, h v)^T$ is the vector of unknown conserved variables.
In our simulations, we set $g=1$.
To discretize this problem, we use the standard Lax-Friedrichs splitting
\[
	\hat{f}_{j- 1/2} := \frac{1}{2} \left( f( u_j ) + f( u_{j-1} ) \right) - \frac{\alpha}{2} \left( u_j - u_{j-1} \right),
	\quad
	\alpha = \max_{j} \left\{ |v_j \pm \sqrt{h_j}| \right\},
\]
and define the (conservative) approximation to the first derivative as
\[
	f(U( x_j ) )_x \approx \frac{1}{\Delta x} \left( \hat{f}_{j+1/2} - \hat{f}_{j-1/2} \right).
\]
We discretize the spatial grid  $x\in (0,1)$ with $M=201$ points.
To approximate the second derivative, we start with element-wise first derivative 
$u_{j,t} := -\frac{1}{ {\Delta x} } \left( \hat{f}_{j+1/2} - \hat{f}_{j-1/2} \right)$, and then 
approximate the second derivative (consistent with Eqn. \eqref{eqn:Ft}) as
\[
	u_{j, tt} := -\frac{1}{2 \Delta x} \left( f'(u_{j+1}) u_{j+1,t}  - f'( u_{j-1} ) u_{j-1,t} \right),
\]
where $f'(u_{j\pm1})$ is the Jacobian of the flux function evaluated at $u_{j\pm 1}$.
A simple first order spatial discretization is chosen here because it enables us to show that positivity is preserved for 
forward Euler and Taylor series for $\lambda^+_{FE} =  \lambda^+_{TS}= \alpha \frac{\Delta t}{\Delta x} \leq 1$.

In problems such as the shallow water equations, the non-negativity of the numerical solution is important 
as a height of $h<0$ is not physically meaningful, and the system loses hyperbolicity when the height becomes negative.
For a positivity preserving test case, we consider a Riemann problem with zero initial velocity, but
with a wet and a dry state \cite{BuKuEtWeDa09,XingZhangShu2010}:
\[
\left( h, v \right)^T = \begin{cases}
    (10,0)^T & x \leq 0.5, \\
    (0,0)^T & x > 0.5.
\end{cases}
\]
In our numerical simulations, we focus on  the impact  of the numerical scheme
on the positivity of the solver for the the water height $h(x,t)$. This quantity is of interest from a numerical perspective because 
if the height $h(x,t) < 0$ for any $x$ or $t$,  the code will crash due to square-root of height.

\begin{table}[t]
\begin{center}
\begin{tabular}{|l|ll||l|ll|} \hline 
{\bf Method} & $\lambda^{pred}$ & $\lambda^{obs}$ & {\bf Method}  & $\lambda^{pred}$ &$\lambda^{obs} $  \\ \hline 
Forward Euler & 1.00000 & 1.01058 & Taylor series & 1.00000 & 1.02598 \\ \hline 
Dormand Prince & 0.00000 & 0.00000 & nonSSPRK(8,6) & 0.00000 & 0.00000 \\ \hline 
SSP-SD MDRK(3,5,2) & --  & 1.03176  & SSP-SD MDRK(4,6,1) & --  & 1.07803 \\ \hline 
SSP-TS M2(4,5,1) & 2.18648 &  3.01005 & SSP-TS M2(5,6,1) & 0.35001 & 2.48411  \\ \hline 
SSP-TS M3(5,5,1) & 1.06253  &  1.78593 & SSP-TS M3(7,6,1) & 0.89468 &  1.64084 \\ \hline 
SSP-TS M3(6,5,1) &  1.82079  &  2.12579 & SSP-TS M3(9,6,1) & 2.59860 & 3.03387   \\ \hline  
\end{tabular}
\caption{The predicted and observed values of $ \lambda = \alpha \frac{\dt}{\dx} $ (where $\dx=\frac{1}{200}$)
for which positivity of the height of the water is preserved in 
the shallow water equations in Example 5. \label{tab:sw-positivity} }
\end{center}
\end{table}

First, we  investigate the behavior of the base methods in terms of the positivity preserving time step.
In other words, we want to get a numerical value for $\DtFE, K$. To do so, we numerically study the positivity behavior of
the  forward Euler and Taylor series approach. To do this, we evolve the solution forward for more time-steps with different values
of  $ \lambda = \alpha \frac{\dt}{\dx} $ to identify the predicted positivity preserving value
$ \lambda^{pred}$. Using the approach, we see that as we increase the number of steps 
the predicted value of the positivity preserving value, $ \lambda^{pred}_{FE} \rightarrow 1$
and $ \lambda^{pred}_{TS} \rightarrow 1$, for both forward Euler and Taylor series.
We are not able to numerically identify $\tilde{K}$  resulting from the 
second derivative condition,  which cannot be evolved forward as it does not approximate the solution to the ODE at all.

In Table \ref{tab:sw-positivity}  we  compare the positivity preserving time-step of a variety of numerical time integrators.
We consider the fifth order SSP-TS methods M2(4,5,1), M3(5,5,1), and M3(6,5,1), and compare their performance to the 
SSP-SD MDRK(3,5,2) method in \cite{MSMD},
and the non-SSP Dormand Prince method. We also consider the sixth order SSP-TS methods M2(5,6,1), M3(7,6,1), and M3(9,6,1), as well as
the SSP-SD MDRK(4,6,1) from \cite{MSMD} and the non-SSPRK(8,6) method.
Positivity of the water height is measure at each stage for a total of $N=60$ time-steps. 
We report the largest allowable value of   $\lambda = \alpha \frac{\dt}{\dx} $ ($\alpha$ is the maximal wavespeed for the domain)  
for which the  solution remains positive.  
For each method, the predicted values $\lambda^{pred}$ are obtained by  multiplying the SSP coefficient $\sspcoef_{TS}$ of that method
by  $\lambda^{pred}_{FE}  =  \lambda^{pred}_{TS}  = 1$. For the SSP-SD MDRK methods we do not make a prediction as
we are not able to identify the $\tilde{K}$  resulting from the  second derivative condition.

 In Table \ref{tab:sw-positivity} we show that 
all of our SSP-TS methods preserve the positivity of the solution for values larger than those predicted by 
the theory $\lambda^{obs} > \lambda^{pred}$,  and that even for the SSP MSRK methods there is a large 
region of values  $\lambda^{obs} $ for which the  solution remains positive.  
However, the non-SSP methods permit no positive time step that retains positivity of the solution, 
highlighting the importance of SSP methods.

\section{Conclusions}

In \cite{MSMD} we introduced a formulation and base conditions to extend the SSP framework to multistage multi-derivative time-stepping methods,
and the resulting SSP-SD methods. 
While the choice of base conditions we used in  \cite{MSMD} gives us more flexibility in finding  SSP time stepping schemes, 
it limits the flexibility in the choice of the spatial discretization.  
In the current paper we introduce an alternative SSP formulation based on the  conditions \eqref{FE} and \eqref{TS} and 
 investigate the resulting explicit two-derivative multistage SSP-TS  time integrators. 
 These base conditions are relevant because some commonly used spatial discretizations may not satisfy the second derivative 
 condition \eqref{SD} which we required in \cite{MSMD}, but do satisfy the Taylor series condition \eqref{TS}. 
 This approach decreases the flexibility in our choice of time discretization because some
time discretizations that can be decomposed into convex combinations of \eqref{FE} and  \eqref{SD}
cannot be decomposed into convex combinations of \eqref{FE} and  \eqref{TS}. However, it increases the
 flexibility in our choice of spatial  discretizations, as we may now consider spatial methods that 
satisfy \eqref{FE} and  \eqref{TS} but not \eqref{SD}.
In the  numerical tests we showed that this increased flexibility allowed for more efficient simulations in several cases.
 
 In this paper, we proved that explicit SSP-TS  methods have a maximum obtainable order of $p=6$.
Next we formulated the proper optimization procedure to generate  SSP-TS  methods.  
Within this new class we were able to organize our schemes into three sub categories
that reflect the different simplifications used in the optimization. We obtained methods up to and including order 
$p=6$ thus breaking the SSP order barrier for explicit SSP Runge-Kutta methods. 
Our  numerical tests  show that the SSP-TS  explicit two-derivative methods perform as expected, preserving the strong stability 
properties satisfied by the base  conditions \eqref{FE} and \eqref{TS} under the predicted 
time-step conditions. Our simulations demonstrate the sharpness of the 
SSP-TS condition in some cases, and  the need for SSP-TS time-stepping methods.  
Furthermore the numerical results indicate that the added freedom in the choice of spatial discretization 
results in larger allowable time steps.
The coefficients of the SSP-TS methods described in this work can be downloaded from \cite{SSPTSgithub}.

\bigskip

{\bf Acknowledgements.}  The work of D.C. Seal was supported in part by the Naval Academy Research Council.
The work of S. Gottlieb and Z.J. Grant was  supported by the AFOSR grant \#FA9550-15-1-0235.

A part of this research is sponsored by the Office of Advanced Scientific Computing Research; U.S. Department of Energy, 
and was performed at the Oak Ridge National Laboratory, which is managed by UT-Battelle, LLC under Contract No. De-AC05-00OR22725.

 Notice: This manuscript has been authored by UT-Battelle, LLC, under contract DE-AC05-00OR22725 with the U.S. Department of Energy. The United States Government retains and the publisher, by accepting the article for publication, acknowledges that the United States Government retains a non-exclusive, paid-up, irrevocable, world-wide license to publish or reproduce the published form of this manuscript, or allow others to do so, for United States Government purposes.
 
\newpage
\appendix
\section{Order Conditions} \label{MDRKOrderCondition}
Any method of the form \eqref{MSMD} must satisfy the order conditions for all $p \leq P$ to be of order $P$.

\bigskip
\noindent $p = 1$ 
\begin{align*}
 b^T e =1  \hspace*{5.15in} 
 \end{align*}
 
 \noindent $p = 2$ 
\begin{align*}
 b^T c+\hat{b}^Te =\frac{1}{2}   \hspace*{4.65in} 
 \end{align*}

 \noindent $p = 3$ 
\begin{subequations}
\begin{align*}
 & \hspace*{-2.25in}  b^T c^2 + 2\hat{b}^T c=\frac{1}{3} \hspace*{2.5in}  \\ 
 & \hspace*{-2.25in}  b^TAc+b^T\hat{c}+\hat{b}^Tc=\frac{1}{6}  
 \end{align*}
  \end{subequations}
  
   \noindent $p = 4$ 
  \begin{subequations}
 \begin{align*}
&  b^Tc^3+3\hat{b}^Tc^2=\frac{1}{4}   \hspace*{4.5in}  \\
&  b^T\left( c \odot Ac\right)+b^T\left( c \odot \hat{c}\right)+\hat{b}^Tc^2+\hat{b}^TAc+\hat{b}^T\hat{c}=\frac{1}{8} \\
	 & b^TAc^2+2b^T\hat{A}c+\hat{b}^Tc^2=\frac{1}{12} \\
&   b^TA^2c+ b^TA\hat{c}+ b^T\hat{A}c+\hat{b}^TAc+\hat{b}^T\hat{c}=\frac{1}{24}
 \end{align*}
 \end{subequations}
 
    \noindent $p = 5$ 
  \begin{subequations}
 \begin{align*}
	&       b^Tc^4 + 4\hat{b}^Tc^3 =\frac{1}{5} \label{p5.1} \\
	           &  b^T\left( c^2 \odot Ac\right)  + b^T\left( c^2 \odot \hat{c}\right)+\hat{b}^Tc^3+2\hat{b}^T\left( c \odot Ac\right)+2\hat{b}^T\left( c \odot \hat{c}\right)=\frac{1}{10}   \\
	           &  b^T\left ( c \odot Ac^2\right)+2b^T\left(c \odot \hat{A}c\right)+\hat{b}^Tc^3+\hat{b}^T Ac^2+2\hat{b}^T\hat{A}c=\frac{1}{15} \\
	           &  b^T\left( c \odot A^2c \right)+b^T\left( c \odot A\hat{c}\right)+b^T\left( c \odot \hat{A}c\right)+\hat{b}^T\left( c \odot Ac\right)+\hat{b}^T\left(c \odot \hat{c}\right) 
	         + \hat{b}^TA^2c+\hat{b}^TA\hat{c}+\hat{b}^T\hat{A}c=\frac{1}{30} \\
	           &  b^T\left( Ac \odot Ac \right) +2b^T\left( \hat{c} \odot Ac\right)+b^T\hat{c}^2+ 2\hat{b}^T\left(c \odot Ac\right)+2\hat{b}^T\left(c \odot \hat{c}\right)=\frac{1}{20}   \\
	          &  b^TAc^3+3b^T\hat{A}c^2+\hat{b}^Tc^3=\frac{1}{20}  \\
	           &  b^TA\left(c	 \odot  Ac\right)+b^TA\left(c \odot \hat{c}\right)+b^T\hat{A}c^2+b^T\hat{A}Ac+b^T\hat{A}\hat{c}  + \hat{b}^T\left( c \odot Ac\right)+\hat{b}^T\left( c \odot \hat{c}\right)=\frac{1}{40} \\
	& b^TA^2c^2+2b^TA\hat{A}c+b^T\hat{A}c^2+\hat{b}^T Ac^2+2\hat{b}^T\hat{A}c=\frac{1}{60} \\
&	 b^TA^3 c+b^TA^2 \hat{c}+b^T A \hat{A}c+b^T\hat{A}Ac+b^T\hat{A}\hat{c}
	+\hat{b}^TA^2c +\hat{b}^TA\hat{c}+\hat{b}^T\hat{A}c=\frac{1}{120} 
 \end{align*}
 \end{subequations}
 
     \noindent $p = 6$  
 \begin{subequations}
 \begin{align*} 
	        	  &      b^T c^5 +  5\hat{b}^T c^4 =  \frac{1}{6}   \\
	           &  b^T \left(c^3 \odot Ac\right) + 3\hat{b}^T \left(c^2 \odot Ac \right) + \hat{b}^T c^4 + b^T \left(c^3 \odot \hat{c}\right) + 3\hat{b}^T \left(c^2 \odot \hat{c}\right)  =  \frac{1}{12}\\
	           &  b^T \left( c^2 \odot A c^2\right) + 2\hat{b}^T\left(c \odot A c^2\right) + 2b^T \left(c^2\odot \hat{A}c\right) +  \hat{b}^T c^4 + 4\hat{b}^T\left(c \odot \hat{A}c\right)  =  \frac{1}{18} \\
	           &  b^T\left( c \odot A c^3\right) + 3b^T\left( c \odot \hat{A} c^2\right) + \hat{b}^TA c^3 + 3\hat{b}^T\hat{A} c^2 + \hat{b}^T c^4  =  \frac{1}{24} \\
	          &  b^TA c^4 +  4b^T\hat{A} c^3 +  \hat{b}^T c^4  =  \frac{1}{30}\\
	           &  b^T\left( c^2 \odot A^2c\right) + 2\hat{b}^T\left(c \odot A^2c\right) + b^T \left(c^2 \odot A\hat{c}\right) + b^T \left(c^2 \odot \hat{A}c\right) + \hat{b}^T \left(c^2 \odot Ac\right)  + 2\hat{b}^T\left(c  \odot A\hat{c}\right)   \nonumber \\
		  &	\; \; \; \; \; \; \; \; \; \; \; \; \; \;  + 2\hat{b}^T\left(c \odot \hat{A}c\right) + \hat{b}^T \left( c^2 \odot \hat{c}\right)  =  \frac{1}{36} \\	
		  & b^T\left(c \odot A^2 c^2\right) + \hat{b}^TA^2 c^2 + \hat{b}^T\left( c \odot A c^2\right) + b^T\left( c \odot \hat{A} c^2\right) + 2b^T\left( c \odot A\hat{A}c\right) + \hat{b}^T\hat{A} c^2 + 2\hat{b}^TA\hat{A}c \nonumber \\
		  & \; \; \; \; \; \; \; \; \; \; \; \; \; \;   + 2\hat{b}^T\left(c \odot \hat{A}c\right) =\frac{1}{72}\\
		  & b^TA^2 c^3 +  \hat{b}^TA c^3 +  b^T\hat{A} c^3 +  3b^TA\hat{A} c^2 +  3\hat{b}^T\hat{A} c^2  =  \frac{1}{120}\\
		  & b^T\left( c \odot Ac \odot Ac\right) + \hat{b}^T\left(Ac \odot Ac\right)  + b^T\left(c \odot \hat{A}Ac\right) + b^T\left( c\odot A\left(c\odot\hat{c}\right)\right) + \hat{b}^T \left(c^2 \odot Ac\right) + b^T\left( c \odot \hat{A} c^2\right) 				\nonumber \\	
		  & \; \; \; \; \; \; \; \; \; \; \; \; \; \; +\hat{b}^T\hat{A}Ac + \hat{b}^TA\left(c \odot \hat{c}\right) + b^T\left( c \odot \hat{A}\hat{c}\right) + \hat{b}^T\hat{A} c^2 + \hat{b}^T c^2\hat{c} +  \hat{b}^T\hat{A}\hat{c}  =  \frac{1}{48}    \\		  	  
	          &b^TA \left(c^2 \odot Ac\right) + b^TA \left(c^2 \odot \hat{c}\right) + \hat{b}^T \left(c^2 \odot Ac\right) + 2b^T\left(\hat{A}c \odot Ac\right) + b^T\hat{A} c^3 + 2b^T\left(\hat{A}c \odot \hat{c}\right) \nonumber \\
	          &	\; \; \; \; \; \; \; \; \; \; \; \; \; \; + \hat{b}^T \left( c^2 \odot \hat{c}\right)  =  \frac{1}{60}\\
	          & b^T\left(Ac \odot A c^2\right) +  \hat{b}^T\left( c \odot A c^2\right) + b^T\hat{A}A c^2 + b^T\hat{A} c^3 + 2b^T\left(Ac \odot \hat{A}c\right) + 2b^T\hat{A}^2c + 2\hat{b}^T\left(c \odot \hat{A}c\right)  =  \frac{1}{90}      \\	          
	          & b^T\left(c \odot A^3c\right) + \hat{b}^TA^3c + \hat{b}^T\left( c \odot A^2c \right)+ b^T\left(  c \odot \hat{A}Ac\right) + b^T\left( c \odot A\hat{A}c\right) + b^T\left(c \odot A^2\hat{c}\right) + \hat{b}^T\hat{A}Ac \nonumber  \\ 
	          &\; \; \; \; \; \; \; \; \; \; \; \; \; \; + \hat{b}^TA\hat{A}c  + \hat{b}^TA^2\hat{c} + b^T\left(c \odot \hat{A}\hat{c}\right) + \hat{b}^TCA\hat{c} + \hat{b}^TC\hat{A}c + \hat{b}^T\hat{A}\hat{c}  =  \frac{1}{144}\\	         	          	          	          
	          & b^T\left( Ac \odot A^2c\right) + b^T\left( Ac \odot A\hat{c}\right) + b^T\left(Ac \odot \hat{A}c\right) + b^T\left(\hat{A}c \odot Ac\right) + b^T\hat{A}A^2c + \hat{b}^T\left(c \odot A^2c\right) \nonumber \\
	          &\; \; \; \; \; \; \; \; \; \; \; \; \; \;  + b^T\left( \hat{A}c \odot \hat{c}\right)  + b^T\hat{A}A\hat{c} + \hat{b}^T\left(c \odot A\hat{c}\right) + b^T\hat{A}^2c + \hat{b}^T\left(c \odot \hat{A}c\right)  =  \frac{1}{180}\\
	          & b^T(A^2\left(c \odot Ac\right) + b^TA^2\left(c \odot \hat{c}\right) + b^TA\hat{A}c^2 + b^TA\hat{A}Ac + b^T\hat{A}\left(c \odot Ac\right) + \hat{b}^TA\left(c \odot Ac\right) + b^TA\hat{A}\hat{c} \nonumber \\
	       	 & \; \; \; \; \; \; \; \; \; \; \; \; \; \;   + b^T\hat{A}\left(c \odot \hat{c}\right)  + \hat{b}^TA\left(c \odot \hat{c}\right)  + \hat{b}^T\hat{A}c^2  + \hat{b}^T\hat{A}Ac + \hat{b}^T\hat{A}\hat{c}  =  \frac{1}{240}\\
	          & b^TA^3 c^2 +  \hat{b}^TA^2 c^2 +  b^T\hat{A}A c^2 +  b^TA\hat{A} c^2 +  2b^TA^2\hat{A}c +  \hat{b}^T\hat{A} c^2 +  2\hat{b}^TA\hat{A}c +  2b^T\hat{A}^2c  =  \frac{1}{360}           \\
	          & b^T\left(c \odot  Ac \odot  Ac\right) + \hat{b}^T\left(Ac \odot  Ac\right) + 2b^T\left(c \odot  \hat{c} \odot  Ac\right) + 2\hat{b}^T\left(c^2 \odot  Ac\right) + 2\hat{b}^T\left(\hat{c} \odot  Ac\right) + 2\hat{b}^T\left(c^2 \odot  \hat{c}\				\right)\nonumber \\
	          & \; \; \; \; \; \; \; \; \; \; \; \; \; \;  + b^T\left(c \odot  \hat{c}^2\right) + \hat{b}^T \hat{c}^2  =  \frac{1}{24}                   
	       \end{align*}
 \end{subequations}
  \begin{subequations}
 \begin{align*} 	
  	 & b^T\left(Ac \odot  A^2c\right) + b^T\left(\hat{c} \odot  A^2c\right) + \hat{b}^T\left(c \odot A^2c\right) + \hat{b}^T\left(Ac \odot  Ac\right) + b^T\left(Ac \odot  \hat{A}c\right) + b^T\left(Ac \odot  A\hat{c}\right) \nonumber \\
	         & \; \; \; \; \; \; \; \; \; \; \; \; \; \; + 2 \hat{b}^T\left(\hat{c} \odot  Ac\right) + b^T\left(\hat{c} \odot  \hat{A}c\right) + b^T\left(\hat{c} \odot  A\hat{c}\right) + \hat{b}^T\left(c \odot  \hat{A}c\right) + \hat{b}^T\left(c \odot  A\hat{c}\right) 
	         + \hat{b}^T \hat{c}^2  =  \frac{1}{72}\\            
	       	& b^T\left(Ac \odot  Ac^2\right) + b^T\left(\hat{c} \odot  Ac^2\right) + \hat{b}^T\left(c \odot  Ac^2\right) + \hat{b}^T\left(Ac \odot  c^2\right) + 2b^T\left(Ac \odot  \hat{A}c\right) + \hat{b}^T\left(\hat{c} \odot  c^2\right) \nonumber \\
	        & \; \; \; \; \; \; \; \; \; \; \; \; \; \;  + 2b^T\left(\hat{c} \odot  \hat{A}c\right) + 2\hat{b}^T\left(c \odot  \hat{A}c\right)  =  \frac{1}{36}\\
   	       	& b^TA\left(Ac \odot  Ac\right) + 2b^TA\left(\hat{c} \odot  Ac\right) + 2b^T\hat{A}\left(c \odot  Ac\right) + \hat{b}^T\left(Ac \odot  Ac\right) + 2\hat{b}^T\left(\hat{c} \odot  Ac\right) + 2b^T\hat{A}\left(c \odot  \hat{c}\right) \nonumber \\
	        & \; \; \; \; \; \; \; \; \; \; \; \; \; \;  + b^TA\hat{c}^2 + \hat{b}^T \hat{c}^2  =  \frac{1}{120}\\
	      	& b^TA^4c +  b^TA^3\hat{c} +  b^TA^2\hat{A}c +  b^TA\hat{A}Ac +  b^T\hat{A}A^2c +  \hat{b}^TA^3c +  b^TA\hat{A}\hat{c} +  b^T\hat{A}A\hat{c} +  \hat{b}^TA^2\hat{c} \nonumber \\
	       & \; \; \; \; \; \; \; \; \; \; \; \; \; \; +  b^T\hat{A}^2c +  \hat{b}^TA\hat{A}c  +  \hat{b}^T\hat{A}Ac +  \hat{b}^T\hat{A}\hat{c}  =  \frac{1}{720}  
	       \end{align*}
 \end{subequations} 

\newpage
\section{Coefficients of selected methods}
All the time-stepping methods in this work can be downloaded as Matlab files from \cite{SSPTSgithub}. In this appendix we present selected methods.

\bigskip
\noindent {\bf SSP-TS M2(4,5,1) :} This method has  $\sspcoef_{TS}= 2.18648$

\[
\begin{array}{lll}
a_{21}=4.280141748183123e-01 \; \; &  \hat{a}_{21}=9.159806692270039e-02  \; \; & b_{1}=3.456442194983256e-01  \vspace{.1cm}  \\ 
a_{31}=3.174364422211321e-01 & \hat{a}_{31}=2.068159838961376e-02 &  b_{2}=1.551487425849178e-01  \vspace{.1cm}  \\
a_{32}=1.032647478325804e-01   &  \hat{a}_{32}=2.361437143530821e-02 &    b_{3}=3.458932447335502e-01  \vspace{.1cm} \\
a_{41}=3.280547501426051e-01 & \hat{a}_{41}=1.869435227642530e-02 &  b_{4}=1.533137931832064e-01   \vspace{.1cm} \\
a_{42}=9.334228125655676e-02 & \hat{a}_{42}=2.134532206271365e-02  &  \hat{b}_{1}=3.226836941745746e-02  \vspace{.1cm}\\
a_{43}=4.134096583922347e-01 & \hat{a}_{43}=9.453767556809974e-02  &\hat{b}_2=1.785928934720153e-02   \vspace{.1cm}\\
& &  \hat{b}_3=7.490191551289183e-02  \vspace{.1cm} \\
& &  \hat{b}_4=3.505948481328697e-02  \vspace{.1cm} \\

\end{array}
\] 

\bigskip

\noindent {\bf SSP-TS M3(8,6,1):} This method has   $\sspcoef_{TS}= 1.7369$ 
\[
\begin{array}{lll}
a_{21}=3.498630949258150e-01\;\; &  a_{65}=3.779563241192044e-01 \;\;&  \hat{a}_{21}=6.120209259553491e-02 \vspace{.1cm}\\
a_{31}=2.253295269463227e-01 & a_{71}=2.148681581922796e-01 &  \hat{a}_{31}=1.921063160949869e-02 \vspace{.1cm}\\
a_{32}=1.807161013759724e-01 & a_{72}=1.533420472452636e-01 &\hat{a}_{41}=4.358605297856505e-03 \vspace{.1cm}\\
a_{41}=2.071695605568409e-01 & a_{73}=1.813808417863181e-02 & \hat{a}_{51}=2.136333816692593e-03 \vspace{.1cm}\\
a_{42}=4.100178308548576e-02 & a_{74}=7.994387176143736e-02 & \hat{a}_{61}=1.402456855983780e-03 \vspace{.1cm}\\
a_{43}=1.306253212278126e-01 & a_{75}=1.630752796649391e-01 &\hat{a}_{71}=1.631142330728269e-02 \vspace{.1cm}\\
a_{51}=1.667117585911237e-01 & a_{76}=2.484093806816690e-01 &\hat{a}_{81}=1.548804492637956e-02 \vspace{.1cm}\\
a_{52}=2.009667996165993e-02 & a_{81}=2.036762412289922e-01 &b_1\;\;=1.927179349665056e-01 \vspace{.1cm}\\ 
a_{53}=6.402490521280881e-02 & a_{82}=1.456707401767411e-01 &b_2\;\;=7.457643792836192e-02 \vspace{.1cm}\\
a_{54}=2.821909187189924e-01 & a_{83}=2.379744031395224e-02 &b_3 \;\;=1.097549250079706e-01 \vspace{.1cm}\\
a_{61}=1.493141923275556e-01 & a_{84}=1.048777345557326e-01 &b_4\;\;=1.166274027628658e-01 \vspace{.1cm}\\
a_{62}=1.319303489675465e-02 & a_{85}=2.139668745571685e-01 &b_5 \;\;=1.862061970475841e-01 \vspace{.1cm}\\
a_{63}=4.203095914776495e-02 & a_{86}=6.560681670556633e-02 & b_6\;\;=1.088089628270683e-01 \vspace{.1cm}\\
a_{64}=1.852522020371737e-01 & a_{87}=1.520556075200664e-01 & b_7\;\;=4.414821350738243e-02 \vspace{.1cm}\\
&&b_8\;\;=1.671599259522612e-01 \vspace{.1cm}\\
&&\hat{b}_1\;\;=1.156518516980132e-02\vspace{.1cm}\\
\end{array}
\]

\newpage
\section{Fifth order WENO method of Jiang and Shu} \label{WENO}

To solve the PDE $$ u_t + f(u)_x = 0$$ we approximate the spatial derivative to obtain 
$F(u) \approx -f(u)_x$, and then use
a time-stepping method to solve the resulting system of ODEs.  In this section we describe the fifth order WENO
scheme presented in \cite{jiang1996}.

First, we split the flux into the positive and negative parts
$$f(u) = f^{+}(u) + f^{-}(u).$$
This can be accomplished in a variety of ways, e.g. the Lax-Friedrichs
flux splitting 
$$f^{+}(u) = \frac{1}{2} \left( f(u) + m u \right) \; \; \; \;
f^{-}(u) = \frac{1}{2} \left( f(u) - m u \right),$$
where $m = \max |f'(u)|$.  In this way we ensure that 
$\frac{d f^{+}}{du} \geq 0$ and $\frac{d f^{-}}{du} \leq 0$.

To calculate the  numerical fluxes $\hat{f}^{+}_{j+ \frac{1}{2}}$ and $\hat{f}^{-}_{j+ \frac{1}{2}}$,
we begin by calculating the smoothness measurements to determine if a shock lies
within the stencil.  For our fifth order scheme, these are:
\begin{eqnarray*}
IS_0^+ & = & \frac{13}{12} \left( f^+_{j-2} -2 f^+_{j-1} + f^+_{j} \right)^2
          + \frac{1}{4} \left( f^+_{j-2} -4 f^+_{j-1} + 3 f^+_{j} \right)^2 \\
IS_1^+ & = & \frac{13}{12} \left( f^+_{j-1} -2 f^+_{j} + f^+_{j+1} \right)^2
          + \frac{1}{4} \left( f^+_{j-1} - f^+_{j+1} \right)^2 \\
IS_2^+ & = & \frac{13}{12} \left( f^+_{j} -2 f^+_{j+1} + f^+_{j+2} \right)^2
          + \frac{1}{4} \left( 3 f^+_{j} -4 f^+_{j+1} +  f^+_{j+2} \right)^2 
\end{eqnarray*}
and
\begin{eqnarray*}
IS_0^- & = & \frac{13}{12} \left( f^-_{j+1} -2 f^-_{j+2} + f^-_{j+3} \right)^2
          + \frac{1}{4} \left( 3 f^-_{j+1} -4 f^-_{j+2} +  f^-_{j+3} \right)^2\\
IS_1^- & = & \frac{13}{12} \left( f^-_{j} -2 f^-_{j+1} + f^-_{j+2} \right)^2
          + \frac{1}{4} \left( f^-_{j} - f^-_{j+2} \right)^2 \\
IS_2^- & = & \frac{13}{12} \left( f^-_{j-1} -2 f^-_{j} + f^-_{j+1} \right)^2
          + \frac{1}{4} \left( f^-_{j-1} -4 f^-_{j} + 3 f^-_{j+1} \right)^2 \; .
\end{eqnarray*}
Next, we use the smoothness measurements to calculate the stencil weights
$$ \alpha^\pm_0 = \frac{1}{10} \left( \frac{1}{\epsilon + IS_0^\pm}\right)^2 \; \; \; \; \;
\alpha^\pm_1 = \frac{6}{10} \left( \frac{1}{\epsilon + IS_1^\pm}\right)^2 \; \; \; \; \;
\alpha^\pm_2 = \frac{3}{10} \left( \frac{1}{\epsilon + IS_2^\pm}\right)^2  $$
and
$$ 
\omega^\pm_0 = \frac{\alpha^\pm_0}{\alpha^\pm_0 + \alpha^\pm_1 + \alpha^\pm_2} \; \; \; \; \;
\omega^\pm_1 = \frac{\alpha^\pm_1}{\alpha^\pm_0 + \alpha^\pm_1 + \alpha^\pm_2} \; \; \; \; \;
\omega^\pm_2 = \frac{\alpha^\pm_2}{\alpha^\pm_0 + \alpha^\pm_1 + \alpha^\pm_2}. $$
Finally, the numerical fluxes are
\begin{eqnarray*}
 \hat{f}^{+}_{j+ \frac{1}{2}} & = &\omega^+_0 
\left(\frac{2}{6} f^+_{j-2} -\frac{7}{6} f^+_{j-1} + \frac{11}{6} f^+_{j} \right)
 +  \omega^+_1 
\left( - \frac{1}{6} f^+_{j-1} + \frac{5}{6} f^+_{j} + \frac{2}{6} f^+_{j+1} \right)\\
&+& \omega^+_2
\left(\frac{2}{6} f^+_{j} + \frac{5}{6} f^+_{j+1} - \frac{1}{6} f^+_{j+2} \right) 
\end{eqnarray*}
and
\begin{eqnarray*}
 \hat{f}^{-}_{j+ \frac{1}{2}} & = &\omega^-_2 
\left( - \frac{1}{6} f^-_{j-1} + \frac{5}{6} f^-_{j} + \frac{2}{6} f^-_{j+1} \right)
 +  \omega^-_1 
\left(\frac{2}{6} f^-_{j} + \frac{5}{6} f^-_{j+1} - \frac{1}{6} f^-_{j+2} \right) \\
&+& \omega^-_0
\left(\frac{11}{6} f^-_{j+1} - \frac{7}{6} f^-_{j+2} + \frac{2}{6} f^-_{j+3} \right) .
\end{eqnarray*}

Finally, we compute
\begin{eqnarray}
WENO^+ f^{+}(u) & = &    -  \frac{1}{\Delta x} \left(
\hat{f}^{+}_{j+ \frac{1}{2}} - \hat{f}^{+}_{j- \frac{1}{2}} \right) 
  \label{WENO+} \\
WENO^- f^{-}(u) & = &  - \frac{1}{\Delta x} \left(
\hat{f}^{-}_{j+ \frac{1}{2}} - \hat{f}^{-}_{j- \frac{1}{2}} \right) 
  \label{WENO-}
\end{eqnarray}
and put it all together
\[ F(u) =  WENO^+ f^{+}(u) + WENO^- f^{-}(u) \approx -f(u)_x .\]

\newpage


\begin{thebibliography}{10}

\bibitem{msrk}
{\sc C.~Bresten, S.~Gottlieb, Z.~Grant, D.~Higgs, D.~I. Ketcheson, and
  A.~N{\'e}meth}, {\em Strong stability preserving multistep {R}unge-{K}utta
  methods}, Mathematics of Computation, 86 (2017), pp.~747--769.

\bibitem{BuKuEtWeDa09}
{\sc S.~Bunya, E.~J. Kubatko, J.~J. Westerink, and C.~Dawson}, {\em A wetting
  and drying treatment for the {R}unge-{K}utta discontinuous {G}alerkin
  solution to the shallow water equations}, Comput. Methods Appl. Mech. Engrg.,
  198 (2009), pp.~1548--1562.

\bibitem{tsai2010}
{\sc R.~P.~K. Chan and A.~Y.~J. Tsai}, {\em On explicit two-derivative
  {R}unge-{K}utta methods}, Numerical Algorithms, 53 (2010), pp.~171--194.

\bibitem{ToroADERWENO}
{\sc J. B. Cheng,  E.~F. Toro,  S.  Jiang,  and W. Tang},
{\em A sub-cell WENO reconstruction method for spatial derivatives in the ADER scheme},
Journal of Computational Physics 251 (2013), pp.~53 - 80.


\bibitem{MSMD}
{\sc A.~Christlieb, S.~Gottlieb, Z.~Grant, and D.~C. Seal}, {\em Explicit
  strong stability preserving multistage two-derivative time-stepping schemes},
  Journal of Scientific Computing, 68 (2016), pp.~914--942.

\bibitem{cockburn1989}
{\sc B. Cockburn and C.-W. Shu},
{\em TVB {R}unge-{K}utta local projection discontinuous {G}alerkin finite
  element method for conservation laws {II}: general framework},
Mathematics of Computation 52 (1989), pp.~411--435.

\bibitem{DaruTenaud}
{\sc V. Daru and C. Tenaud},
{\em High order one-step monotonicity-preserving schemes for unsteady compressible flow calculations},
Journal of Computational Physics 193 (2004), pp.~563--594.


\bibitem{DormandPrince1980}
{\sc J.~R. Dormand and P.~J. Prince}, {\em A family of embedded Runge-Kutta
  formulae}, Journal of Computational and Applied Mathematics, 6 (1980),
  pp.~19--26.

\bibitem{LiDu2018}
{\sc Z. Du and J. Li},
{\em A Hermite WENO reconstruction for fourth order temporal accurate schemes based on the GRP solver for hyperbolic conservation laws},
Journal of Computational Physics 355  (2018), pp.~385--396

\bibitem{DumbserBalsara1}
{\sc M. Dumbser, O. Zanotti, A. Hidalgo, and D.~S. Balsara},
{\em ADER-WENO finite volume schemes with space-time adaptive mesh refinement},
Journal of Computational Physics 248 (2013), pp.~257 - 286.


\bibitem{ferracina2004}
{\sc L.~Ferracina and M.~N. Spijker}, {\em Stepsize restrictions for the
  total-variation-diminishing property in general {R}unge--{K}utta methods},
  SIAM Journal of Numerical Analysis, 42 (2004), pp.~1073--1093.

\bibitem{ferracina2005}
\leavevmode\vrule height 2pt depth -1.6pt width 23pt, {\em An extension and
  analysis of the {S}hu--{O}sher representation of {R}unge--{K}utta methods},
  Mathematics of Computation, 249 (2005), pp.~201--219.

\bibitem{SSPbook2011}
{\sc S.~Gottlieb, D.~I. Ketcheson, and C.-W. Shu}, {\em Strong Stability
  Preserving Runge--Kutta and Multistep Time Discretizations}, World Scientific
  Press, 2011.

\bibitem{SSPTSgithub}
{\sc S.~Gottlieb, Z.~J. Grant, D.~C. Seal}, {\em Explicit {S}{S}{P} multistage two-derivative 
methods with Taylor series base conditions}.
\newblock \url{https://github.com/SSPmethods/SSPTSmethods}.

\bibitem{MSMDoptcode}
{\sc Z.~J. Grant}, {\em Explicit {S}{S}{P} multistage two-derivative {S}{S}{P}
  optimization code}.
\newblock
  \url{https://github.com/SSPmethods/SSPMultiStageTwoDerivativeMethods},
  February 2015.
  
  \bibitem{harten1983}
{\sc A. Harten},
{\em High resolution schemes for hyperbolic conservation laws},
Journal of Computational Physics 49 (1983), pp.~357--393.



\bibitem{higueras2004a}
{\sc I.~Higueras}, {\em On strong stability preserving time discretization
  methods}, Journal of Scientific Computing, 21 (2004), pp.~193--223.

\bibitem{higueras2005a}
\leavevmode\vrule height 2pt depth -1.6pt width 23pt, {\em Representations of
  {R}unge--{K}utta methods and strong stability preserving methods}, SIAM
  Journal On Numerical Analysis, 43 (2005), pp.~924--948.


\bibitem{higueras2009}
{\sc I.~Higueras}, 
{\em  Characterizing Strong Stability Preserving Additive {R}unge-Kutta Methods}, 
Journal of Scientific Computing, 39(1) (2009), pp. 115--128.

\bibitem{Jeltsch2006}
{\sc R. Jeltsch},
{\em Reducibility and contractivity of Runge--Kutta methods revisited},
BIT Numerical Mathematics, 46(3) (2006), pp. 567--587.


\bibitem{jiang1996}
{\sc G.-S. Jiang and C.-W. Shu}, {\em Efficient implementation of weighted
  {ENO} schemes}, J. Comput. Phys., 126 (1996), pp.~202--228.


\bibitem{KaWa72}
{\sc K.~Kastlunger and G.~Wanner}, {\em On {T}uran type implicit
  {R}unge-{K}utta methods}, Computing (Arch. Elektron. Rechnen), 9 (1972),
  pp.~317--325.

\bibitem{KaWa72-RK}
{\sc K.~H. Kastlunger and G.~Wanner}, {\em Runge {K}utta processes with
  multiple nodes}, Computing (Arch. Elektron. Rechnen), 9 (1972), pp.~9--24.

\bibitem{ketcheson2008}
{\sc D.~I. Ketcheson}, {\em Highly efficient strong stability preserving
  {R}unge--{K}utta methods with low-storage implementations}, SIAM Journal on
  Scientific Computing, 30 (2008), pp.~2113--2136.

\bibitem{tsrk}
{\sc D.~I. Ketcheson, S.~Gottlieb, and C.~B. Macdonald}, {\em Strong stability
  preserving two-step {R}unge-{K}utta methods}, SIAM Journal on Numerical
  Analysis,  (2012), pp.~2618--2639.

\bibitem{ketcheson2009}
{\sc D.~I. Ketcheson, C.~B. Macdonald, and S.~Gottlieb}, {\em Optimal implicit
  strong stability preserving {R}unge--{K}utta methods}, Applied Numerical
  Mathematics, 52 (2009), p.~373.

\bibitem{ketchcodes}
{\sc D.~I. Ketcheson, M.~Parsani, and A.~J. Ahmadia}, {\em {RK-Opt:} software
  for the design of {R}unge--{K}utta methods, version 0.2}.
\newblock \url{https://github.com/ketch/RK-opt}.

\bibitem{kraaijevanger1991}
{\sc J.~F. B.~M. Kraaijevanger}, {\em Contractivity of {R}unge--{K}utta
  methods}, BIT, 31 (1991), pp.~482--528.


\bibitem{kurganov2000}
{\sc A. Kurganov and E. Tadmor},
{\em New high-resolution schemes for nonlinear conservation laws and
  convection-diffusion equations}, 
Journal of Computational Physics 160 (2000), pp.~241--282.

\bibitem{LiDu2016a}
{\sc J. Li and Z. Du},
{\em A two-stage fourth order time-accurate discretization for Lax-Wendroff type flow solvers I. hyperbolic conservation laws}, 
SIAM J. Sci. Computing, 38 (2016) pp.~3046--3069.



\bibitem{liu1994}
{\sc X.-D. Liu, S. Osher and T. Chan},
{\em Weighted essentially non-oscillatory schemes},
Journal of Computational Physics 115 (1994), pp.~200--212.



\bibitem{mitsui1982}
{\sc T.~Mitsui}, {\em {R}unge-{K}utta type integration formulas including the
  evaluation of the second derivative. i.}, Publications of the Research
  Institute for Mathematical Sciences, 18 (1982), pp.~325--364.

\bibitem{Nguyen-Ba2010}
{\sc T.~Nguyen-Ba, H.~Nguyen-Thu, T.~Giordano, and R.~Vaillancourt}, {\em
  One-step strong-stability-preserving {H}ermite-{B}irkhoff-{T}aylor methods},
  Scientific Journal of Riga Technical University, 45 (2010), pp.~95--104.

\bibitem{obreschkoff1940}
{\sc N.~Obreschkoff}, {\em Neue quadraturformeln}, Abh. Preuss. Akad. Wiss.
  Math.-Nat. Kl., 4 (1940).

\bibitem{ono2004}
{\sc H.~Ono and T.~Yoshida}, {\em Two-stage explicit {R}unge-{K}utta type
  methods using derivatives.}, Japan Journal of Industrial and Applied
  Mathematics, 21 (2004), pp.~361--374.

\bibitem{osher1984}
{\sc S. Osher and S. Chakravarthy},
{\em High resolution schemes and the entropy condition},
SIAM Journal on Numerical Analysis  21(1984), pp.~955--984.

\bibitem{LiDu2016b}
{\sc L. Pan, K. Xu, Q. Li, and  J. Li},
{\em An efficient and accurate two-stage fourth-order gas-kinetic scheme for the Euler and Navier--Stokes equations},
 Journal of Computational Physics 326 (2016), pp.~197--221.
 
\bibitem{QiuDumbserShu}
{\sc J. Qiu, M. Dumbser, and C.-W. Shu},
{\em The discontinuous Galerkin method with Lax--Wendroff type time discretizations},
Computer Methods in Applied Mechanics and Engineering 194 (2005), pp.~4528--4543.


\bibitem{ruuth2001}
{\sc S.~J. Ruuth and R.~J. Spiteri}, {\em Two barriers on
  strong-stability-preserving time discretization methods}, Journal of
  Scientific Computing, 17 (2002), pp.~211--220.

\bibitem{sealMSMD2014}
{\sc D.~C. Seal, Y.~Guclu, and A.~J. Christlieb}, {\em High-order
  multiderivative time integrators for hyperbolic conservation laws}, Journal
  of Scientific Computing, 60 (2014), pp.~101--140.

\bibitem{shintani1971}
{\sc H.~Shintani}, {\em On one-step methods utilizing the second derivative},
  Hiroshima Mathematical Journal, 1 (1971), pp.~349--372.

\bibitem{shintani1972}
\leavevmode\vrule height 2pt depth -1.6pt width 23pt, {\em On explicit one-step
  methods utilizing the second derivative}, Hiroshima Mathematical Journali, 2
  (1972), pp.~353--368.

\bibitem{shu1988b}
{\sc C.-W. Shu}, {\em Total-variation diminishing time discretizations}, SIAM
  Journal on Scientific and Statistical Computing, 9 (1988), pp.~1073--1084.

\bibitem{shu1988}
{\sc C.-W. Shu and S.~Osher}, {\em Efficient implementation of essentially
  non-oscillatory shock-capturing schemes}, Journal of Computational Physics,
  77 (1988), pp.~439--471.

\bibitem{SpiteriRuuth2002}
{\sc R.~J. Spiteri and S.~J. Ruuth}, {\em A new class of optimal high-order
  strong-stability-preserving time discretization methods}, SIAM J. Numer.
  Anal., 40 (2002), pp.~469--491.

\bibitem{StSt63}
{\sc D.~D. Stancu and A.~H. Stroud}, {\em Quadrature formulas with simple
  {G}aussian nodes and multiple fixed nodes}, Mathematics of Computation, 17
  (1963), pp.~384--394.


\bibitem{sweby1984}
{\sc P.K. Sweby},
{\em High resolution schemes using flux limiters for hyperbolic
  conservation laws},
SIAM Journal on Numerical Analysis 21 (1984), pp.~995--1011.

\bibitem{tadmor1998}
{\sc E. Tadmor},
{\em Approximate solutions of nonlinear conservation laws} in 
{\em Advanced Numerical Approximation of Nonlinear Hyperbolic
  Equations,  Lectures Notes from CIME Course Cetraro, Italy, 1997}, 
   Number 1697 in Lecture Notes in Mathematics. Springer-Verlag, 1998.


\bibitem{ToroADER}
{\sc E. Toro and V.A. Titarev}, 
{ \em Solution of the generalized {R}iemann  problem for advection--reaction equations}, 
Proceedings of the Royal Society  of London A: Mathematical, Physical and Engineering Sciences
458 (2002), pp.~271--281.


\bibitem{ToroADERCL}
{\sc E.F. Toro and V.A. Titarev},
{\em Derivative Riemann solvers for systems of conservation laws and ADER methods}, 
Journal of Computational Physics 212 (2006), pp.~150--165.


\bibitem{tsai2014}
{\sc A.~Y.~J. Tsai, R.~P.~K. Chan, and S.~Wang}, {\em Two-derivative
  {R}unge--{K}utta methods for {P}{D}{E}s using a novel discretization
  approach}, Numerical Algorithms, 65 (2014), pp.~687--703.

\bibitem{Tu50}
{\sc P.~Tur{\'a}n}, {\em On the theory of the mechanical quadrature}, Acta
  Universitatis Szegediensis. Acta Scientiarum Mathematicarum, 12 (1950),
  pp.~30--37.

\bibitem{Verner2014}
{\sc J.~Verner}, {\em Explicit rungeÐkutta methods with estimates of the local
  truncation error}, SIAM Journal on Numerical Analysis, 15 (2014),
  pp.~772--790.

\bibitem{XingZhangShu2010}
{\sc Y.~Xing, X.~Zhang, and C.-W. Shu}, {\em Positivity-preserving high order
  well-balanced discontinuous Galerkin methods for the shallow water
  equations}, Advances in Water Resources, 33 (2010), pp.~1476 -- 1493.
  
  
\bibitem{ZhangShu2010}
{\sc X.  Zhang and .C.-W. Shu},
{\em  On Maximum-principle-satisfying High Order Schemes for Scalar Conservation Laws},
Journal of Computational Physics 229 (2010), pp.~3091--3120.

\end{thebibliography}

\end{document}